\documentclass[12pt]{amsart}
\usepackage[english]{babel}
\usepackage{amsfonts,graphicx,latexsym}
\usepackage{amsmath}
\usepackage{amsthm}
\usepackage{dsfont}
\usepackage{amssymb}
\usepackage{mathrsfs}
\usepackage{times}
\usepackage[margin=45pt]{geometry}

\usepackage[foot]{amsaddr}

\usepackage{hyperref}\hypersetup{
    colorlinks=true,
    linkcolor=purple,
    filecolor=blue,      
    urlcolor=cyan,
    }
\usepackage{comment}
\usepackage{xcolor}
\usepackage[most]{tcolorbox}
\usepackage{tikz}
\usepackage{lipsum}
\usetikzlibrary{arrows.meta,svg.path}
\definecolor{lime}{HTML}{A6CE39}
\DeclareRobustCommand{\orcidicon}{%
	\begin{tikzpicture}
	\draw[lime, fill=lime] (0,0) 
	circle [radius=0.16] 
	node[white] {{\fontfamily{qag}\selectfont \tiny ID}};
	\draw[white, fill=white] (-0.0625,0.095) 
	circle [radius=0.007];
	\end{tikzpicture}
	\hspace{-2mm}
}
\foreach \x in {A, ..., Z}{%
	\expandafter\xdef\csname orcid\x\endcsname{\noexpand\href{https://orcid.org/\csname orcidauthor\x\endcsname}{\noexpand\orcidicon}}
}

\newtheorem{theorem}{Theorem}[section]

\newtheorem{lemma}[theorem]{Lemma}

\newtheorem{proposition}[theorem]{Proposition}

\newtheorem{remark}[theorem]{Remark}

\tcolorboxenvironment{theorem}{
  enhanced,
  breakable,
  colback=gray!3,
  colframe=black!50!black,
  boxrule=0.8pt,
  arc=2mm,
  left=6pt,
  right=6pt,
  top=6pt,
  bottom=6pt,
  before skip=10pt,
  after skip=10pt
}

\tcolorboxenvironment{lemma}{
  enhanced,
  breakable,
  colback=gray!3,
  colframe=black!40!black,
  boxrule=0.8pt,
  arc=2mm,
  left=6pt,
  right=6pt,
  top=6pt,
  bottom=6pt,
  before skip=10pt,
  after skip=10pt
}

\tcolorboxenvironment{proposition}{
  enhanced,
  breakable,
  colback=gray!3,
  colframe=black!50!black,
  boxrule=0.8pt,
  arc=2mm,
  left=6pt,
  right=6pt,
  top=6pt,
  bottom=6pt,
  before skip=10pt,
  after skip=10pt
}

\def\P{{\mathbb P}} 
 
\def\E{{\mathbb E}} 
\def\Z{{\mathbb Z}}

\def \R {{\mathbb{R}}}
\def \0{{\mathbf 0}} 
\def \1{{\mathbf 1}} 
\def \X {{\mathbf{X} }}

\makeatletter
\newsavebox{\@brx}
\newcommand{\llangle}[1][]{\savebox{\@brx}{\(\m@th{#1\langle}\)}%
  \mathopen{\copy\@brx\mkern2mu\kern-0.9\wd\@brx\usebox{\@brx}}}
\newcommand{\rrangle}[1][]{\savebox{\@brx}{\(\m@th{#1\rangle}\)}%
  \mathclose{\copy\@brx\mkern2mu\kern-0.9\wd\@brx\usebox{\@brx}}}
\makeatother
\newcommand{\rmd}{\,\mathrm{d}}
\newcommand{\eps}{\varepsilon}

\def\1{{\mathchoice {1\mskip-4mu\mathrm l}      
{1\mskip-4mu\mathrm l} 
{1\mskip-4.5mu\mathrm l} {1\mskip-5mu\mathrm l}}}

\title[{Random walks on the 3D randomly oriented Manhattan lattice}]{Superdiffusivity of random walks on the three-dimensional randomly oriented Manhattan lattice}
\author[Tuan-Minh Nguyen]{Tuan-Minh Nguyen\orcidA{}
}
\address{School of Mathematics, Monash University, 9 Rainforest Walk, Clayton 3800, Victoria, Australia.}
\email{tuanminh.nguyen@monash.edu}

\begin{document}

\begin{abstract}
We study the superdiffusive behavior of random walks on the randomly oriented Manhattan lattice, i.e., the $d$-dimensional integer lattice $\mathbb{Z}^d$ where each axis-aligned line is independently assigned a random direction (forward or backward) with equal probability. The walker takes nearest-neighbor steps, choosing an axis randomly and moving along the assigned direction of that axis’s line, with equal probabilities for each axis. We show that, in the critical dimension $d=3$, the diffusion coefficient of the random walk diverges in the Tauberian sense as $\sqrt{\log t}$ with a multiplicative correction $(\log\log t)^{\pm(2+\eps)}$ as time $t\to\infty$. This gives an answer to a conjecture by Ledger, T\'oth and Valk\'o (2018).    

\end{abstract}
\keywords{random walks in random environment, superdiffusivity, diffusion coefficient}
\subjclass{60K50, 82C41, 82C27}

\maketitle
\tableofcontents

\section{Introduction}
Random walks in stationary doubly stochastic, or equivalently divergence-free, random environments form a natural class of non-reversible stochastic systems in which one may study the transition from diffusive to anomalous transport. This class of random walks was introduced by Kozlov in \cite{K1985}. For further background and a general treatment based on martingale approximation, see \cite[Section~3.2]{KLO2012}. Under uniform ellipticity of the symmetric part of the jump rates, Kozma and T\'oth \cite{KT2017} proved that the $\mathcal H_{-1}$ condition on the antisymmetric drift is sufficient for a central limit theorem. T\'oth \cite{Toth2024} subsequently gave an alternative functional-analytic proof that does not use the diagonal heat-kernel bound appearing in \cite{KT2017}, and \cite{Toth2026} extended the result to degenerate environments under substantially weaker ellipticity assumptions. For an overview of diffusive and superdiffusive limits for random walks in divergence-free random drift fields, see \cite{Toth2018}.

The random walk on the randomly oriented Manhattan lattice, first introduced in \cite{LTV2018,Toth2018}, is a typical discrete model in this class exhibiting anomalous behavior in lower dimensions. In this model, independently for every axis-aligned line of $\mathbb Z^d$, one of its two orientations is chosen with equal probability, and the walker jumps at rate one along the prescribed orientation in each coordinate direction. The random environment therefore has infinite-range correlations along every coordinate line. For this model, the $\mathcal H_{-1}$ condition holds in dimensions $d\ge4$, and therefore the walk is diffusive. This condition, however, fails for $d\in\{2,3\}$. In the critical dimension $d=3$, the corresponding regularized $\mathcal H_{-1}$ norm diverges logarithmically, and Ledger, T\'oth and Valk\'o \cite{LTV2018} conjectured that the diffusion coefficient of the model diverges as $\sqrt{\log t}$ as $t\to\infty$.

\begin{figure}[ht]
    \centering
\begin{tikzpicture}[scale=1.2, >={Stealth[length=8pt, width=4pt]}]
    \def\n{4} 
    \foreach \x in {0,...,\n} {
        \foreach \y in {0,...,\n} {
            \fill (\x, \y) circle (2pt); 
        }
    }
    \foreach \y in {0,...,\n} {
        \ifnum\y=0 
            \foreach \x in {0,...,\the\numexpr\n-1} {
                \draw[->, ultra thick, blue!50] (\x, \y) -- (\x+1, \y);
            }
        \else
            \ifnum\y=1 
                \foreach \x in {0,...,\the\numexpr\n-1} {
                    \draw[->, ultra thick, blue!50] (\x, \y) -- (\x+1, \y);
                }
            \else
                \ifnum\y=2 
                    \foreach \x in {1,...,\n} {
                        \draw[->, ultra thick, blue!50] (\x, \y) -- (\x-1, \y);
                    }
                \else
                    \ifnum\y=3 
                        \foreach \x in {0,...,\the\numexpr\n-1} {
                            \draw[->, ultra thick, blue!50] (\x, \y) -- (\x+1, \y);
                        }
                    \else
                        \ifnum\y=4 
                            \foreach \x in {0,...,\the\numexpr\n-1} {
                                \draw[->, ultra thick, blue!50] (\x, \y) -- (\x+1, \y);
                            }
                        \fi
                    \fi
                \fi
            \fi
        \fi
    }

    \foreach \x in {0,...,\n} {
        \ifnum\x=0 
            \foreach \y in {1,...,\n} {
                \draw[->, ultra thick, red!50] (\x, \y) -- (\x, \y-1);
            }
        \else
            \ifnum\x=1 
                \foreach \y in {0,...,\the\numexpr\n-1} {
                    \draw[->, ultra thick, red!50] (\x, \y) -- (\x, \y+1);
                }
            \else
                \ifnum\x=2 
                    \foreach \y in {0,...,\the\numexpr\n-1} {
                        \draw[->, ultra thick, red!50] (\x, \y) -- (\x, \y+1);
                    }
                \else
                    \ifnum\x=3 
                        \foreach \y in {1,...,\n} {
                            \draw[->, ultra thick, red!50] (\x, \y) -- (\x, \y-1);
                        }
                    \else
                        \ifnum\x=4 
                            \foreach \y in {1,...,\n} {
                                \draw[->, ultra thick, red!50] (\x, \y) -- (\x, \y-1);
                            }
                        \fi
                    \fi
                \fi
            \fi
        \fi
    }

    \draw[green!70!black, line width=1.5pt, -Stealth] 
        (0,0) node[below left, black] {0} 
        -- (1,0) node[below right, black] {1} 
        -- (1,1) node[above right, black] {2} 
        -- (1,2) node[above right, black] {3} 
        -- (1,3) node[below right, black] {4} 
        -- (2,3) node[below right, black] {5} 
        -- (3,3) node[below right, black] {6} 
        -- (4,3) node[below right, black] {7} 
        -- (4,2) node[below right, black] {8} 
-- (3,2) node[below right, black] {9} 
-- (3,1) node[below right, black] {10}; 
\end{tikzpicture}
\caption{A walk of 10 steps on the two-dimensional randomly oriented Manhattan lattice}
    \label{fig1}
\end{figure}

The model is formally described as follows.  Let $(e_i)_{i\in[d]}$ be the standard orthonormal basis of $\mathbb R^d$, where $[d]:=\{1,\ldots,d\}$. For each $i\in [d]$, let $U_i:=\{ y=(y_j)_{1\le j\le d}\in \Z^d$: $y_i=0\}$.
For each $i\in [d]$ and each point $x$ in the hyperplane $U_i$, let $L_{x,i}:=\{ y=x+te_i : t\in \Z \}$ be the grid line passing through $x$ and parallel to $e_i$. 
We attach a random direction $\omega(x,i) e_i$ to $L_{x,i}$, where $\omega(x,i)$ is a random variable taking values in $\{-1,1\}$. 
We call $\omega=(\omega(x,i))_{i\in [d], x\in U_i}$ a \textbf{randomly oriented Manhattan environment}. We assume throughout this paper that 
$$\P(\omega(x,i) = 1)=\P(\omega(x,i) = -1)=\frac{1}{2}$$ 
and $(\omega(x,i))_{i\in [d], x\in U_i}$ are independent. 
For any $x\in \Z^d$ and $i\in [d]$, set ${\omega(x,i)}=\omega(x-x_i e_i,i)$ which is associated to the chosen direction on $L_{x,i}$.

Let $\X=(X_t)_{t\ge0}$ with $X_t=(X_{i,t})_{i\in [d]}$ be the nearest-neighbor continuous-time random walk on $\Z^d$ in the randomly oriented Manhattan environment which is defined as follows. It starts from the origin $\0$, i.e., $X_0=\0$. Given the environment $\omega$, 
the process jumps from its current position $x$ to a neighbor along the direction $\omega(x,i)e_i$ with rate 1 for each $i\in [d]$.
Notice that $\X$ is a random walk in a doubly stochastic environment, i.e., its quenched law is Markovian and is given by 
$$P_{\omega}( X_{t+h}=x\pm e_i \ | X_t=x)= 
p_{\pm i}(\tau_x\omega)h+o(h)\quad  \text{for $i\in [d]$},$$
where $\tau_x$ is the translation of the environment by $x$ and  $p_{\pm i}(\omega):=(1\pm\omega(\0,i))/{2}.$

For $d\ge 4$, the process $\X$ satisfies the so-called $\mathcal{H}_{-1}$ condition and it is thus diffusive (see Section 1.4 in \cite{Toth2018}). This condition no longer holds for $d\in \{2, 3\}$. Using a non-rigorous
Alder-Wainwright type scaling argument (see \cite{AW1967}, \cite{FNS1977} and \cite{TV2012}), Ledger, T\'oth and Valk\'{o} \cite{LTV2018} conjectured that
$$\E[|X_t|^2] \asymp t^{4/3} \text{ when $d=2$ and }\E[|X_t|^2]\asymp t (\log t)^{1/2} \text{ when $d=3$}.$$
Let $D(\lambda)=\int_0^{\infty}\E[|X_t|^2]{\rm e}^{-\lambda t}\rmd t$. This is equivalent in the Tauberian sense to 
$$ D(\lambda)\asymp \lambda^{-7/3} \text{ for $d=2$}, \quad   D(\lambda)\asymp \lambda^{-2} \sqrt{|\log\lambda|} \text{ for $d=3$}$$
as $\lambda \to 0$.

Our main result confirms this conjecture in the critical dimension $d=3$, up to a  multiplicative correction $(\log|\log\lambda|)^{\pm (2+\eps)}$.

\begin{theorem}\label{thm.main} When $d=3$, for each $\eps>0$ there exists a constant $C_{\eps}\in (1,\infty)$ such that for each $\lambda\in (0,e^{-2})$, 
$$C_{\eps}^{-1}( \log|\log\lambda|)^{-2-\eps} \le \frac{\lambda^2D(\lambda)}{\sqrt{|\log\lambda|}}\le C_{\eps}( \log|\log\lambda|)^{2+\eps}.$$
\end{theorem}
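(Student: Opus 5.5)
We will follow the resolvent method of Landim–Quastel–Salmhofer–Yau, as implemented for the self-repelling Brownian polymer and for 2D/3D divergence-free drift models (Tóth–Valkó, Cannizzaro–Haunschmid-Sibitz–Toninelli). The first step is to write the environment process $\eta_t=\tau_{X_t}\omega$, which is stationary and ergodic under the product measure $\pi$ because the environment is doubly stochastic. Its generator splits as $L=S+A$, where $S$ is the symmetric part, given by the nearest-neighbour Laplacian in each coordinate direction with rate $1/2$, and $A$ is the antisymmetric part, given by $\frac12\sum_i \omega(\0,i)(\tau_{e_i}-\tau_{-e_i})$. The drift is $\varphi=(\omega(\0,i))_{i}$. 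The standard identity is
\[
\lambda^2D(\lambda)=2\lambda^{-1}\Big(c_0+ \langle \varphi,(\lambda-L)^{-1}\varphi\rangle_\pi\Big),
\]
with $c_0$ the symmetric contribution, up to the trivial normalisation. This reduces the theorem to showing that $\langle\varphi,(\lambda-L)^{-1}\varphi\rangle\asymp\sqrt{|\log\lambda|}$ up to $(\log|\log\lambda|)^{\pm(2+\eps)}$.

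Next we pass to a chaos (Fock space) decomposition. Because $\omega(x,i)$ depends only on the line through $x$ parallel to $e_i$, $L^2(\pi)$ decomposes into products of distinct line variables. We label an $n$-particle sector by sets of lines, and in Fourier variables a line parallel to $e_i$ carries a momentum $p\in[-\pi,\pi]^3$ with $p_i=0$. On this decomposition $S$ is diagonal, with symbol $\sum_k (1-\cos(\sum p))$ over the total momentum of the whole product. $A$ splits as $A_++A_-$ and changes the chaos degree by $\pm1$. The action of $\omega(\0,i)$ times a shift creates or annihilates a line in direction $i$. With total momentum $q$, the corresponding symbol involves $\sin q_i$ integrated over the $2$-dimensional transverse momenta. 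In $d=3$ the one-line propagator integral $\int_{p_i=0}|\sin q_i|^2/(\lambda+|q|^2)\,dp$ is logarithmic, which is the source of $|\log\lambda|$.

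The key step is the recursive variational bound. We define $\mathcal S_n=\lambda-S$ acting on chaoses of degree $\le n$, and use the truncated operators to obtain alternating bounds. Following LQSY, we introduce operators $H_n$ by $H_1=\lambda-S$ and $H_{n+1}=\lambda-S+A_+^*H_n^{-1}A_+$ (restricted to diagonal approximations). Then the odd truncations give lower bounds and the even truncations give upper bounds on $\langle\varphi,(\lambda-L)^{-1}\varphi\rangle$. We will replace the $H_n$ by explicit Fourier multipliers of the form $(\lambda+|q|^2)\,\mathcal L_n(\lambda+|q|^2)$. For these, the recursion reads approximately
\[
\mathcal L_{n+1}(z)=1+\int\frac{dx}{(z+x)\mathcal L_n(z+x)}\quad\text{(logarithmic measure)},
\]
with alternating starting points. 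The fixed point $\mathcal L\approx\sqrt{\log(1/z)}$ solves $\mathcal L'\mathcal L\sim 1/z$. To make the recursion close, we must show that the off-diagonal parts of $A_+^*H_n^{-1}A_+$ are dominated by the diagonal part. This requires a Cauchy–Schwarz/"replacement" lemma controlling terms where the created line interacts with existing lines. These terms have constant-factor losses that compound with $n$. We will therefore truncate at level $n\approx \log|\log\lambda|$ and use monotone functions of the form $\sqrt{\log(1/z)}\,(\log\log(1/z))^{\pm(1+\eps/2)}$ as test functions, which is where the $(\log|\log\lambda|)^{\pm(2+\eps)}$ correction comes from.

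The main obstacle is this diagonal domination estimate in the Manhattan geometry. The lines have degenerate momenta (one coordinate fixed to zero) and different orientations, so the "kinetic energy" bound $|q|^2\ge c\sum_j|p_j|^2$ used in standard settings fails. The cross terms must instead be controlled by exploiting the $2$-dimensional transverse integrals and the antisymmetry $\sin q_i$. Our first attempt will be a Schur-test bound on the off-diagonal kernel with weight $(\lambda+|q|^2)^{-1/2}$, which we expect to cost only a constant per level and hence to be compatible with the iterated-logarithm correction.
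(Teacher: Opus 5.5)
Your architecture matches the paper's: environment process, line (Rademacher) chaos, alternating truncations $\Lambda_{m+1}=A_-(\lambda-S+\Lambda_m)^{-1}A_+$, a diagonal/off-diagonal split, and a Schur test for the cross terms. The gap is in the error accounting that is supposed to close the recursion, which is exactly where the paper's key device sits.

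\textbf{Per-level losses must be summable, not constant.} You need $M\asymp\log|\log\lambda|$ levels, since the iterates behave like Taylor truncations of $\sqrt{L}=e^{\frac12\log L}$. A constant loss $C>1$ per level therefore compounds to $C^{M}=|\log\lambda|^{c\log C}$. That destroys the $\sqrt{|\log\lambda|}$ order rather than producing a $(\log|\log\lambda|)^{\pm(2+\eps)}$ correction. One needs per-step factors $1\pm\varepsilon_m$ with $\sum_m\varepsilon_m<\infty$. The paper takes $\varepsilon_m=(m+1)^{-1-\eps}$, so that $c_{2m+1}=c_1\prod_{r\le m}\frac{1-\varepsilon_r}{1+\varepsilon_r}$ stays bounded below.

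\textbf{Your ansatz cannot deliver such small errors.} $V_{\rm Off}$ carries a factor equal to the chaos degree $n$. The Schur test bounds it only by $Cn$ times a Dirichlet-form quantity $\langle f,(-S)f\rangle$, with no logarithmic gain. So at moderate momenta and large $n$ it is not dominated by the diagonal part, and multipliers $(\lambda+|q|^2)\mathcal L_k(\lambda+|q|^2)$ indexed only by the level $k$ cannot absorb it.

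\textbf{The missing device.} The paper introduces a large auxiliary parameter $z=z_m(n)=K_2(n+m)^{2(2+\eps)}$, depending on both the level and the chaos degree. It enters $L(x,z)=z+\log(1+x^{-1})$, which defines ${\rm LB}_m$ and ${\rm UB}_m$, and the prefactors $K_1\sqrt z$ and $(K_1\sqrt z)^{-1}$ of the even and odd comparison operators $T_m$. This makes the relative errors $O(z^{-1/2}+(m+1)(1+\log z)/z)$ for the radial integrals and $O(n/\sqrt z)$ for the off-diagonal and hard-core terms, all at most $\varepsilon_m$. The price is a factor $\sqrt{z_{2M}(1)}^{\,\pm1}\asymp M^{\pm(2+\eps)}$ between the final bounds and the truth. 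That factor is precisely the $(\log|\log\lambda|)^{\pm(2+\eps)}$ in the theorem; it does not come from iterated-log factors in test functions.

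\textbf{The hard-core constraint is not addressed.} Since $\omega(x,i)^2=1$, the degree-$n$ chaos consists of symmetric functions vanishing on diagonals. The simple Fourier formula for the creation operator holds only for the unconstrained $\widetilde A_+ = A_+ + B$. The paper proves $|\langle f,A_-RA_+f\rangle-\langle f,(\widetilde A_+)^*R\widetilde A_+f\rangle|\le\frac{Cn}{\kappa}\langle f,(-S)f\rangle$ whenever the multiplier of $R$ depends only on total momentum and is at most $C_R/(\lambda+\kappa\theta)$. The proof uses a trace estimate for $f$ restricted to configurations containing $(\mathbf 0,i)$. This error is then absorbed by the terms $K_{\rm err}\sqrt z\,\theta$ and $-K_{\rm err}\theta$ built into $T_{2m}$ and $T_{2m+1}$.

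\textbf{The diagonal symbol is not isotropic.} Integrating over the transverse momentum of a new line in direction $k$ gives $\sin^2(P^k)$ times a function of $\lambda+\theta_k(P)$ alone, with $P$ the total momentum of the existing lines. This is why the paper uses the direction-resolved multipliers $\mathfrak l_m,\mathfrak u_m$ rather than functions of $\lambda+|q|^2$.

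\textbf{Two smaller slips.} The correct identity is $\lambda^2D(\lambda)=d+2d\langle\phi,(\lambda-G)^{-1}\phi\rangle$ with $\phi(\omega)=\omega(\mathbf 0,1)$; there is no factor $\lambda^{-1}$. With $H_1=\lambda-S$, the odd levels give upper bounds and the even levels give lower bounds, not the reverse.
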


\begin{remark}It is worth mentioning that Theorem \ref{thm.main} significantly strengthens a previous result obtained in \cite{LTV2018} for $d=3$:
$$C^{-1}\lambda^{-2}\log|\log\lambda|  \le D(\lambda)\le C \lambda^{-2}|\log\lambda|.$$
By a well-known argument
\cite{QV2008}, one can obtain from the upper bound in
Theorem~\ref{thm.main} the following upper bound for the diffusivity in
real time:
\[
\mathbb E[|X_t|^2]
\le
O\left(
t\sqrt{\log t}\,(\log\log t)^{2+\varepsilon}
\right).
\]
Moreover, applying for instance \cite[Theorem~1.7.1]{BGT1987}, the lower bound in Theorem~\ref{thm.main} implies that
\[
\limsup_{t\to\infty}
\frac{\mathbb E[|X_t|^2]}
{t\sqrt{\log t}\,(\log\log t)^{-2-\varepsilon}}
>0 .
\]
\end{remark}

The phenomenon studied in this paper belongs to a broader class of models exhibiting logarithmic superdiffusivity at criticality. Starting from the numerical observations and physical arguments of Alder and Wainwright~\cite{AW1967,AW1970}, logarithmically
superdiffusive behavior has appeared in several models of
non-reversible random motion and interacting particle systems. In particular, Alder and Wainwright identified a non-integrable long-time tail of the velocity autocorrelation in dimension two, which leads to a divergent diffusion coefficient. Forster, Nelson and Stephen \cite{FNS1977} subsequently developed a non-rigorous renormalization group and scaling analysis of this long-time tail phenomenon, providing a systematic description of the logarithmic corrections arising at criticality. A major breakthrough in this area was achieved by H.-T.~Yau~\cite{Y2004}, who proved that, in the Tauberian sense, the diffusion coefficient of the two-dimensional asymmetric exclusion process diverges as $(\log t)^{2/3}$, up to a multiplicative correction
$\exp\big(\pm\gamma(\log\log\log t)^2\big)$
for some $\gamma>0$. For Brownian particles in divergence-free random environments, T\'oth and Valk\'o~\cite{TV2012} predicted $\sqrt{\log t}$-superdiffusivity for diffusion in the curl of the two-dimensional Gaussian free field, and this prediction was proved up to a multiplicative factor $(\log\log t)^{\pm(1+\eps)}$ in the Tauberian sense by Cannizzaro, Haunschmid-Sibitz and Toninelli~\cite{CHT2022}. Using stochastic homogenization, Chatzigeorgiou, Morfe, Otto and Wang~\cite{CMOW2025} proved the optimal $t\sqrt{\log t}$ growth of the mean squared displacement for this model, while Armstrong, Bou-Rabee and Kuusi~\cite{ABK2024} subsequently established a quenched superdiffusive invariance principle, for a broader class of critically correlated incompressible random drifts that includes the curl of the two-dimensional Gaussian free field. Closely related logarithmic superdiffusive behavior has also been studied for the anisotropic KPZ equation~\cite{CET23,CET2023}, self-repelling Brownian polymers~\cite{CG2025}, asymmetric simple exclusion processes~\cite{LQSY2004,Y2004,QV2007}, and the stochastic Burgers equation at critical dimension~\cite{DH2024,CMT2024}.

Random walk on the randomly oriented Manhattan lattice is closely related to the Matheron--de Marsily model, introduced in \cite{MdM1980}. In the Matheron--de Marsily model, one coordinate direction is randomly oriented as in the randomly oriented Manhattan lattice, while the remaining coordinate lines are undirected. At each jump time, the walker chooses uniformly one of the $d$ coordinate lines through its current position. It follows the prescribed orientation when the chosen line is directed, and chooses either neighboring vertex with probability $1/2$ when the chosen line is undirected. In dimension two, this model is closely related to random walk in random scenery. Its transience and functional scaling limit were established in \cite{GPLN2007, GPLN2008}; see also the survey \cite{P2020}. Ledger, T\'oth and Valk\'o \cite{LTV2018} also considered the three-dimensional intermediate model with two directed coordinate directions and one undirected coordinate direction, and obtained superdiffusive upper and lower bounds for its mean squared displacement. More recent work has considered several related models. Collevecchio, Hamza and Tournier \cite{CHT2019} studied a deterministic walk in the same randomly oriented Manhattan environment. They proved that almost surely, the trajectory is eventually periodic and hence bounded, whereas in dimension two, it almost surely localizes on two vertices. Guillotin-Plantard, P\`ene and Watbled \cite{GPW2026} established a limit theorem for a three-dimensional randomly oriented model extending the Matheron--de Marsily model, whose coordinates are described by iterated random walks in random scenery. They also observed that the mutual dependence between the coordinate motions makes the randomly oriented Manhattan lattice considerably more difficult, and that the bounds in \cite{LTV2018} were the only quantitative estimates previously available for this model.

The remainder of the paper is organized as follows. Following Ledger, T\'oth and Valk\'o~\cite{LTV2018}, in Section~\ref{sec:res.method} we introduce the environmental process seen from the walker and express the Laplace transform of the mean squared displacement in terms of the resolvent of its generator $G$. The proof of Theorem~\ref{thm.main} therefore reduces to estimating a quadratic form involving $(\lambda-G)^{-1}$. In Section~\ref{sec:chao.expansion}, we establish a Rademacher chaos expansion to identify the space of square-integrable functions of the random environment with the graded bosonic Fock space
$\bigoplus_{n=0}^{\infty}\ell^2_0(U,d,n)$, where $U:=\{ (x^1,x^2,\cdots, x^d)\in \mathbb{Z}^d: \sum_{j=1}^d x^j=0\}$ and $\ell^2_0(U,d,n)$ denotes the space of square-summable symmetric functions on $(U\times [d])^n$ that vanish on diagonals.
Under this identification, the symmetric part $S$ of the generator $G$ preserves the chaos degree, while the asymmetric part is decomposed as $A=A_+-A_-$, with $A_+$ and $A_-$ increasing and decreasing the degree by one, respectively. As in \cite{LQSY2004}, the resolvent of the truncated operator $G_m$ defined on  the first $m$ chaos provides alternating lower and upper bounds for the resolvent of $G$. Moreover, the quadratic form corresponding to the truncated resolvent $(\lambda-G_m)^{-1}$ is reduced to the quadratic form corresponding to the resolvent of $S-\Lambda_m$ defined on the first chaos, where the sequence of operators $(\Lambda_m)_{m\ge 0}$ satisfies the recursion $\Lambda_{m+1}=A_{-} ( \lambda-S+{\Lambda}_{m})^{-1} {A}_{+}$ for $m\ge1$, and $\Lambda_1=0$. The diagonal-vanishing condition defining the above-mentioned bosonic Fock space, also known as the hard-core constraint, obstructs the direct application of Fourier methods. In Section~\ref{sec:hard-core.removal}, for a positive self-adjoint diagonal operator $R$ and $f\in \ell^2_0(U,3,n)$, we estimate the quadratic form $\langle f,A_-RA_+f\rangle$ by comparing $A_+$ with its unconstrained analogue $\widetilde A_+$, controlling the error produced by removing the hard-core constraint, and estimating the diagonal and off-diagonal parts of the Fourier representation of $\langle f,(\widetilde A_+)^*R\widetilde A_+f\rangle$. In Section~\ref{sec:bounds.msd}, motivated in part by the recursive comparison scheme of \cite{Y2004} and \cite{CHT2022}, we introduce diagonal comparison operators $(T_m)_{m\ge 1}$ together with coefficients $(c_m)_{m\ge 1}$ and combine the estimates from Section~\ref{sec:hard-core.removal} to
establish the recursive bounds $\Lambda_{2m}\le c_{2m}T_{2m}$ and $\Lambda_{2m-1}\ge c_{2m-1}T_{2m-1}$. Finally, we substitute these bounds into the truncated resolvent representation and estimate the resulting first-chaos Fourier integrals to obtain the upper and lower bounds in Theorem~\ref{thm.main}.

Although the present work is motivated by the same general resolvent method framework, the proof is substantially different from the Gaussian-field arguments used in~\cite{CHT2022}. The randomly oriented Manhattan lattice leads to a discrete, non-Gaussian environment. We use a Rademacher chaos expansion rather than a Wiener chaos expansion, and the exclusion of repeated environment variables produces a hard-core constraint on the corresponding Fock space. A substantial part of the proof is therefore devoted to comparing the operator $A_+$ with its unconstrained analogue $\widetilde A_+$ and
controlling the resulting error by the Dirichlet form associated with the symmetric part of the generator. After this hard-core removal, the main estimates are carried out through discrete Fourier analysis on products of lower-dimensional tori. This creates additional difficulties that do not appear in the continuum Gaussian setting, as the relevant singular integrals must be controlled uniformly on the torus and with the correct dependence on the chaos degree. In this respect, the proof is closer in spirit to the recursive resolvent and Fourier-analytic methods used for asymmetric exclusion processes in the works of Landim--Quastel--Salmhofer--Yau~\cite{LQSY2004}, Yau~\cite{Y2004}, and Quastel--Valk\'o~\cite{QV2007}, while the Rademacher chaos and hard-core constraint are specific to the randomly oriented Manhattan lattice model. It is worth noting that the recent sharp results on diffusion in the curl of the two-dimensional Gaussian free field~\cite{CMOW2025,ABK2024} and on the critical stochastic Burgers equation~\cite{CMT2024} are obtained through different approaches. The former are based on quantitative stochastic homogenization and and renormalization arguments, while the latter relies on refined multiscale estimates for the resolvent of the generator. These approaches exploit structural properties that are not generally available for non-reversible lattice models such as asymmetric exclusion processes or random walks in discrete random environments. 

\subsection*{AI disclosure statement}
The author used Grok to generate the TikZ code for illustrating Figure \ref{fig1} and Microsoft 365 Copilot for language editing/rephrasing in the paper, as well as for numerical and symbolic checking of the integral estimates in Section \ref{subsec:integral.estimates} (this aided the author in refining technical bounds in Lemma \ref{lem:ULB} and Lemma \ref{lem:diagonal.integrals}). The main ideas and the mathematical arguments underlying the paper were developed by the author and have been discussed with colleagues since late 2024.

\section{Resolvent method}\label{sec:res.method}

We study the asymptotic behavior of the mean squared displacement $\E[|X_t|^2]$ using the resolvent method. This approach was first developed in \cite{KO2002, LQSY2004, Y2004} to estimate diffusion coefficients for tracer particles in a Gaussian drift field and for asymmetric exclusion processes on $\Z^d$ in dimensions $d\in\{1,2\}$. Subsequently, it was applied in \cite{TTV2012, TV2012} to investigate the asymptotic behavior of self-repelling diffusions driven by the negative gradient of their local time and diffusions in the curl of the two-dimensional Gaussian free field. In this section, for the sake of completeness, we recall the resolvent framework for the environmental process viewed from the walker, as used in \cite{LTV2018}. This framework reduces the analysis of the mean squared displacement to estimates for a quadratic form involving the resolvent of the generator of the environmental process.

Let $\mathcal{E}=\bigotimes_{i\in [d]}\bigotimes_{x\in U_i}\{-1,1\}$ be the set of all possible Manhattan environments. Let $\eta_t\in \mathcal{E}$ be the environment seen from the current position of the walk at time $t$, i.e.,
$$\eta_t(x,i)=\eta_0(x+X_t,i)\quad\text{for every } i\in [d],\, x\in \Z^d,$$
where we set ${\eta_t(x,i)}:=\eta_t(x-x_i e_i,i)$ for $i\in [d], x\in \Z^d$. 
Note that the environmental process $(\eta_t)_{t\in \R_+}$ is a continuous-time Markov chain on $\mathcal{E}$.

Let $\tau_{i} : \mathcal{E}\to \mathcal{E}$ be the translation of the environment by $e_{i}$ and let $\tau_{i}^{-1}$ be its inverse. In coordinates, these operators are defined by
$$
(\tau_{i} \omega)( x, i )=(\tau_{i}^{-1} \omega)( x,i )=\omega( x, i ) \quad\mathrm{and} 
$$
$$
(\tau_{i} \omega)(x,j)=\omega( x+e_{i}, j ),  \quad(\tau_{i}^{-1} \omega)( x,j  )=\omega(  x-e_{i}, j ) \quad\text{for } \ j \neq i. 
$$
Recall that the initial state $\eta_0$ has distribution
$$\pi=\bigotimes_{i\in [d]}\bigotimes_{x\in U_i}\mu_{x,i}$$
where $\mu_{x,i}=\frac{1}{2}(\delta_{-1}+\delta_{1})$ is the Rademacher probability measure on $\{-1,1\}$. Notice that $\pi$ is invariant under the translations $\tau_i$ and $\tau_i^{-1}.$
For two functions $f,g :\mathcal{E}\to \R $, define  the scalar product
$$\langle f,g \rangle_{L^2(\mathcal{E},\pi)} = \int_{\mathcal{E}} f(\omega)g(\omega)\rmd \pi(\omega).$$

Let $G$ be the infinitesimal generator of $(\eta_t)$. Notice that $G$ is defined for each function $f: \mathcal{E}\to \R$ by
\begin{align*}
(G f)(\omega) 
&=\sum_{i=1}^{d} \left( \frac{1+\omega({\mathbf{0},i})} {2} (f ( \tau_{i} \omega)-f ( \omega))+\frac{1-\omega({\mathbf{0},i}) } {2} (f ( \tau_{i}^{-1}\omega)-f ( \omega)) \right).
\end{align*}
Notice that $G=S+A$, where
$$
S  f ( \omega)=\frac{1} {2} \sum_{i=1}^{d} ( f ( \tau_{i} \omega)+f ( \tau_{i}^{-1} \omega)-2 f ( \omega) ), \quad A f ( \omega)=\frac{1}{2}\sum_{i=1}^{d}\omega(\0, i )  ( f ( \tau_{i} \omega)-f ( \tau_{i}^{-1} \omega) ). 
$$
Note that $S$ is the infinitesimal generator of the environmental process  seen from a symmetric
simple random walk on $\Z^d$.

Notice that $(X_{1,t}, \eta_t)_{t\ge0}$ is also a Markov chain on $\Z \times \mathcal{E}$ with the generator
$$
(\widetilde{G}_{1} f) ( z, \omega)=\sum_{i=1}^d\frac{1+\omega(\0, i )} {2} f ( z+\mathbf{1}_{\{i=1\}}, \tau_{i} \omega)+\sum_{i=1}^d\frac{1-\omega( \0,i )} {2} f ( z-\mathbf{1}_{\{i=1\}}, \tau_{i}^{-1} \omega)-d f ( z, \omega). 
$$ 
For $f(z,\omega)=z$, we have $(\widetilde G_1 f)(z,\omega)=\omega(\0,1)$.

Define the function $\phi:\mathcal{E}\to \R$ by $\phi(\omega)=\omega(\0,1)$.
We notice that
    $$M_t:=X_{1,t}-\int_{0}^{t} \phi(\eta_s)\rmd s$$
is a martingale. In particular, it follows that 
$$\E[|X_t|^2]=d\E[|X_{1,t}|^2]=d\left( \E[M_t^2]+2\int_0^t\E[M_s\phi(\eta_s)]\rmd s+ \E\left[ \Big(\int_{0}^{t} \phi(\eta_s)\rmd s \Big)^2\right]\right).$$
Note that $\E[M_t^2]=t$ and $\E[M_s\phi(\eta_s)]=0$. Since $\eta_0\sim\pi$ and $\pi$ is invariant for $(\eta_t)_{t\ge 0}$, we have
\begin{align*}
    E_G(t):&=\E\left[ \Big(\int_{0}^{t} \phi(\eta_s)\rmd s \Big)^2\right]= 2\int_0^t (t-s)\E[\phi(\eta_s)\phi(\eta_0)]\rmd s=2\int_0^t(t-s)\langle e^{sG}\phi,\phi\rangle_{L^2(\mathcal{E},\pi)}\rmd s.\end{align*}
Denote by $D_G(\lambda):=\int_0^{\infty} {\rm e}^{-\lambda t} E_G(t)\rmd t$ the Laplace transform of $E_G$. We obtain
$$D_G(\lambda)=2\lambda^{-2}\int_0^\infty e^{-\lambda s} \langle e^{sG}\phi,\phi\rangle_{L^2(\mathcal{E},\pi)}\rmd s =2\lambda^{-2} \langle\phi,(\lambda-G)^{-1}\phi\rangle_{L^2(\mathcal{E},\pi)}.$$
Consequently, to prove Theorem~\ref{thm.main}, it suffices to establish the corresponding upper and lower bounds for
$\langle\phi,(\lambda-G)^{-1}\phi\rangle_{L^2(\mathcal E,\pi)}$.

\section{Rademacher chaos and truncated Helmholtz equations}  \label{sec:chao.expansion}
In this section, we represent the space $L^2(\mathcal E,\pi)$ in terms of Rademacher chaos and identify it with a graded bosonic Fock space. Under this identification, the symmetric part $S$ of the generator $G$ preserves the chaos degree, whereas its antisymmetric part $A$ decomposes into the operators $A_+$ and $A_-$, which increase and decrease the chaos degree by one, respectively. We then restrict the generator $G$ to the first $m$ chaos sectors and study the corresponding truncated Helmholtz equations. By solving these equations recursively from the highest chaos sector to the first chaos sector, we express the first chaos component of each truncated resolvent in terms of a recursively defined family of self-adjoint operators. Finally, in Lemma~\ref{lemma.monotonicity}, we prove that even truncation levels yield lower bounds for the quadratic form $\langle\phi,(\lambda-G)^{-1}\phi\rangle_{L^2(\mathcal E,\pi)}$, whereas odd truncation levels yield upper bounds.

Throughout the paper, for any positive integer $n$ and for any sequence $(a_k)_{k\geq 1}$, we write $a_{1:n}:=(a_1,\ldots,a_n)$. For $1\le r\le n$, we also write
$
a_{1:n\setminus\{r\}}
:=
(a_1,\ldots,a_{r-1},a_{r+1},\ldots,a_n).
$

\subsection{Rademacher chaos expansion}
Consider the hyperplane 
$${U}:=\Big\{x=(x^1,\cdots, x^d)\in \Z^d: \sum_{j=1}^{d}x^j=0\Big\}.$$
Notice that the orientations in $U$ give all information for the orientations on $\Z^d$ as $U$ intersects every line $L_{x,i}=\{y=x+te_i: t\in \Z\}$ for $x\in U_i$ and $i\in [d]$.
Hence each environment $\omega\in \mathcal{E}$ is uniquely defined by $(\omega(x,i))_{x\in U, i\in [d]}\in \{-1,1\}^{U\times [d]}$. For simplicity, from now on, we set $$\mathcal{E}= \{-1,1\}^{U\times [d]} \quad \text{and} \quad \pi=\bigotimes_{ x\in U, i\in [d]} \mu_{x,i}$$
where $\mu_{x,i}$ is the uniform probability measure on $\{-1,1\}$, i.e., $\pi$ is generated by a $d$-dimensional i.i.d. Rademacher field on the hyperplane $U$. 
Without loss of generality, we consider the probability space $(\Omega, \mathcal{F}, \P)$ where $\Omega=\mathcal{E}$, $\P=\pi$ and $\mathcal{F}$ is the $\sigma$-field generated by finite cylinders.

Denote by $\ell^{2}(U, d,n)$ the set of all functions $u: (U\times [d])^n\to\R$ which are \textit{square-summable}, i.e., $$\sum_{x_{1:n}\in U^n, j_{1:n}\in [d]^n}u^2_{j_{1:n}}(x_{1:n})<\infty.$$
For $f, g\in \ell^2(U,d,n)$,  we define the scalar product
$$\langle f, g \rangle_{\ell^2(U,d,n)}:=\sum_{j\in [d]^n, x\in U^n} f_j(x)g_j(x).$$

Let $\ell^{2}_{\rm sym}(U, d,n)\subset \ell^{2}(U, d,n)$ be the set of square-summable functions which are  \textit{symmetric} in the sense that $$u_{j_1,\cdots, j_n}(x_1,\cdots, x_n)=u_{j_{\sigma(1)},\cdots, j_{\sigma(n)}}(x_{\sigma(1)},\cdots, x_{\sigma(n)})$$ for any permutation $\sigma$ and $(x_{1},\cdots, x_n)\in U^n$, $(j_{1},\cdots, j_n)\in  [d]^n$. 

Let $\ell^{2}_{0}(U, d,n)\subset \ell^{2}_{\rm sym}(U, d,n)$ be the set of symmetric square-summable functions that \textit{vanish on  diagonals}, i.e., $$u_{j_1,\cdots, j_n}(x_1,\cdots, x_n)=0 \quad \text{if $x_l=x_k$ and $j_l=j_k$ for some $k\neq l$.}$$
 
For each environment $\omega\in \mathcal{E}$, define the discrete multiple \textit{Walsh integral} of $f\in \ell_0^2(U,d,n)$, which is a random variable 
$I_n^{(f)}$ given by
$$I_n^{(f)}(\omega)=\sum_{x_{1:n}\in U^n; j_{1:n}\in[d]^n} f_{j_{1:n}}(x_{1:n})\prod_{\ell=1}^n\omega(x_{\ell},j_{\ell} ),$$
in which the above formal sum is defined as the limit of 
$I_n^{(f^{(m)})}$ as $m\to\infty$ where $$f^{(m)}_{j_{1:n}}(x_{1:n})=f_{j_{1:n}}(x_{1:n})\1_{\{|x_i|\le m,\, i\in [n]\}}.$$ Note that the sequence $(I_n^{(f^{(m)})})_{m\ge 1}$ is well defined and it converges  $\P$-almost surely and in $L^4(\P)$ to $I_n^{(f)}$ (see Lemma 2.1. in \cite{DK19}).

\begin{proposition}\label{isometry} For $f\in {\ell_0^2(U,d,n)}$ and $g\in {\ell_0^2(U,d,m)}$, we have
    $\E[I_n^{(f)}]=0$ and $$\E[I_n^{(f)}I_m^{(g)}]=\langle I_n^{(f)}, I_m^{(g)} \rangle_{L^2(\mathcal{E},\pi)}=\delta_{m,n}n!\langle f, g\rangle_{\ell^2(U,d,n)},$$ where $\delta_{m,n}$ denotes the Kronecker delta symbol.
\end{proposition}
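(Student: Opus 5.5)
I will first prove the identities for finitely supported kernels and then pass to the limit using the $L^4(\P)$ (hence $L^2$) convergence recalled just before the statement.

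\textbf{Finitely supported kernels.} Let $f$ and $g$ be finitely supported elements of $\ell^2_0(U,d,n)$ and $\ell^2_0(U,d,m)$. Then $I_n^{(f)}$ and $I_m^{(g)}$ are finite sums, and linearity of expectation applies. The key observation is the vanishing-on-diagonals condition. Every term with $f_{j_{1:n}}(x_{1:n})\neq 0$ has pairwise distinct pairs $(x_\ell,j_\ell)$. Hence $\prod_{\ell}\omega(x_\ell,j_\ell)$ is a product of $n$ distinct independent Rademacher variables. Its mean is $0$ when $n\ge1$, which gives $\E[I_n^{(f)}]=0$. (For $n=0$ the integral is the constant $f$ and the claim is read as the trivial case.)

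\textbf{The product.} For the covariance I expand
$$\E[I_n^{(f)}I_m^{(g)}]=\sum f_{j_{1:n}}(x_{1:n})\,g_{k_{1:m}}(y_{1:m})\,\E\Big[\prod_{\ell=1}^n\omega(x_\ell,j_\ell)\prod_{r=1}^m\omega(y_r,k_r)\Big].$$
Write $P=\{(x_\ell,j_\ell)\}$ and $Q=\{(y_r,k_r)\}$; these are sets of sizes $n$ and $m$, by the diagonal condition. Since $\omega^2=1$, the product equals $\prod_{p\in P\triangle Q}\omega(p)$. Its expectation is $1$ if $P=Q$ and $0$ otherwise.
\begin{itemize}
\item If $m\neq n$, then $P\neq Q$, so the sum is $0$.
\item If $m=n$ and $P=Q$, there is a unique permutation $\sigma$ with $(y_r,k_r)=(x_{\sigma(r)},j_{\sigma(r)})$. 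By symmetry of $g$, each ordered $x_{1:n}$ contributes $n!$ terms, each equal to $f_{j}(x)g_{j}(x)$. Summing gives $n!\langle f,g\rangle_{\ell^2(U,d,n)}$.
\end{itemize}

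\textbf{General kernels.} For general $f,g$ I use the truncations $f^{(M)},g^{(M)}$. These stay symmetric and vanish on diagonals, and $f^{(M)}\to f$ in $\ell^2$. The finite case gives the isometry $\|I_n^{(h)}\|_{L^2}^2=n!\|h\|^2$. Applied to $h=f^{(M)}-f^{(M')}$, this shows $(I_n^{(f^{(M)})})_M$ is Cauchy in $L^2$. The $L^2$ limit must coincide with the almost sure and $L^4$ limit $I_n^{(f)}$ from Lemma 2.1 of \cite{DK19}. Mean and inner product are continuous in $L^2$, and $\langle f^{(M)},g^{(M)}\rangle\to\langle f,g\rangle$, so both identities pass to the limit.

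\textbf{Main point of care.} The only delicate step is the combinatorial count $n!$ in the case $P=Q$. It relies on the diagonal-vanishing condition, which makes the pairs in $P$ distinct so that $\sigma$ is uniquely determined. The analytic limit is routine given the cited convergence.
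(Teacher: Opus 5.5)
Your proof is correct and follows the same route as the paper. Both reduce to finitely supported kernels, observe that the mixed moment is $1$ exactly when the two (distinct, by the diagonal condition) index sets coincide and $0$ otherwise, and use the symmetry of $g$ to count the $n!$ reorderings. You additionally spell out the limiting step from truncations to general kernels via the $L^2$ isometry and continuity, which the paper leaves implicit in ``it is sufficient to prove the result when $f$ and $g$ are finitely supported.''
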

\begin{proof} It is sufficient to prove the result when $f$ and $g$ are finitely supported. It is easy to see that $\E[I_n^{(f)}]=0$ and $\E[I_n^{(f)}I_m^{(g)}]=0$ for $m\neq n$.
    When $m=n$, we have
\begin{align*} 
    \mathbb{E} [ I_{n}^{( f )} I_{n}^{( g )} ]& =\mathbb{E}\Bigg[ \Bigg( \sum_{x_{1:n}\in U^n , j_{1:n}\in [d]^n} f_{j_{1:n}} ( x_{1:n} ) \prod_{\ell=1}^{n} \omega( x_{\ell}, j_{\ell} ) \Bigg) \Bigg( \sum_{y_{1:n}\in U^n, k_{1:n}\in [d]^n} g_{k_{1:n}} ( y_{1:n} ) \prod_{r=1}^{n} \omega( y_{r}, k_{r} ) \Bigg) \Bigg]
\\& =\sum_{x_{1:n}\in U^n , j_{1:n}\in [d]^n} \sum_{y_{1:n}\in U^n, k_{1:n}\in [d]^n} f_{j_{1:n}} ( x_{1:n} ) g_{k_{1:n}} ( y_{1:n} ) \mathbb{E} \Big[ \prod_{\ell=1}^{n} \omega( x_{\ell}, j_{\ell} ) \prod_{r=1}^{n} \omega( y_{r}, k_{r} ) \Big]. 
\end{align*}
 Note that \begin{align*}
     \mathbb{E} \Big[ \prod_{\ell=1}^{n} \omega( x_{\ell}, j_{\ell} ) \prod_{r=1}^{n} \omega( y_{r}, k_{r} ) \Big]=\left\{\begin{matrix} 1 & \text{ if $(x_{\ell},j_{\ell})_{1\le \ell \le n}$ is a permutation of $(y_{r},k_{r})_{ 1\le r \le n}$}\\ 0 & \text{otherwise}.\end{matrix}\right.
     \end{align*}
As $f$ and $g$ are symmetric on $U\times [d]$, we obtain $$\E[I_n^{(f)}I_n^{(g)}]=n!\sum_{x\in U^n,j\in [d]^n} f_j(x)g_j(x) =n!\langle f, g\rangle_{\ell^2(U,d,n)}.$$
\end{proof}

Let $\mathcal{H}_0$ be the set containing constant random variables and $\mathcal{H}_n$ be the closure of the span
of $I_n^{(f)}$ for each $f\in \ell^{2}_{0}(U,d,n)$. The linear subspace $\mathcal{H}_n\subset L^2(\mathcal{E},\pi)$ is called the \textit{Walsh chaos} or
\textit{Rademacher chaos} of degree $n$ (see e.g. Section 4.4 in \cite{LT1991} or \cite{NPR2010}). Notice that $L^2(\mathcal{E},\pi)=L^2(\P)$ can be orthogonally decomposed as
$$L^2(\mathcal{E},\pi)= \bigoplus_{n\ge 0} \mathcal{H}_n.$$
More specifically, for each $\psi\in L^2(\mathcal{E},\pi)$, there exists a sequence of functions $(f^{(n)})_{n\ge 1}\subset \ell_0^2(U,d,n)$ such that $\psi$ has the following \textit{Rademacher chaos expansion}:
\begin{align}\label{chaos.decomp}
\psi=\E[\psi]+\sum_{n=1}^{\infty}I_n^{(f^{(n)})}.
\end{align}
In particular, if $(x_1,j_1),\ldots,(x_n,j_n)$ are pairwise distinct, then
$$
f^{(n)}_{j_1,\cdots,j_n}(x_1,\cdots,x_n) =\frac1{n!}\mathbb E\left[ \psi(\omega) \prod_{\ell=1}^n\omega(x_\ell,j_\ell)\right].
$$

\subsection{Decomposition of the generator}

\subsubsection{Diffusion part}
Recall that for $i, j\in [d]$ and $x\in U$, we have
$$\tau_i \omega(x, j)=\omega(x+e_i-e_j,j)\text{ and } \tau_i^{-1} \omega(x, j)=\omega(x-e_i+e_j,j).$$ 
By a slight abuse of notation, we also define the operator $\tau_i, \tau_{-i} : \mathcal{H}_n\to \mathcal{H}_n$
by $$\tau_i\psi(\omega)= \psi(\tau_i\omega)\quad {\rm and}\quad \tau_{-i}\psi(\omega)=\psi(\tau_{i}^{-1}\omega).$$ 

Let $\psi(\omega)=\prod_{\ell=1}^n\omega( x_{\ell},j_{\ell})\in \mathcal{H}_n$, where $(x_1,x_2,\cdots, x_n)\in U^n$ and $(j_1,j_2,\cdots, j_n)\in [d]^n$ such that $(x_k,j_k)\neq (x_{h}, j_h)$ for $k\neq h$. We have 
\begin{align}\label{def.tau.op}
    \tau_{\pm i}\psi(\omega)=\prod_{\ell=1}^n \omega(x_{\ell}\pm e_i,j_{\ell}) = \prod_{\ell=1}^n \omega(x_\ell \pm( e_i - e_{j_{\ell}}), j_{\ell}),
    \end{align}
We note that $x_{\ell}\pm e_i\notin U$ while $x_\ell \pm( e_i - e_{j_{\ell}})\in U$ and $\omega(x_{\ell}\pm e_i,j_{\ell})=\omega(x_\ell \pm( e_i - e_{j_{\ell}}), j_{\ell})$ for each $\ell\in [n]$. Furthermore, $(x_\ell \pm( e_i - e_{j_{\ell}}),j_{\ell})\neq (x_h \pm( e_i - e_{j_{h}}),j_{h})$ for $\ell\neq h$. 

Notice that $\tau_{-i}$ is the adjoint of $\tau_{i}$. 
In fact, it is sufficient to verify the claim for monomials $\psi(\omega)=\prod_{\ell=1}^n\omega( x_{\ell},j_{\ell})\in \mathcal{H}_n$ and $\varphi(\omega)=\prod_{\ell=1}^n\omega( y_{\ell},k_{\ell})\in \mathcal{H}_n$. We have
\begin{align*}
    \langle \tau_i\psi, \varphi  \rangle_{L^2(\mathcal{E},\pi)} & = \E \Big[  \prod_{\ell=1}^n\omega( x_{\ell}+e_i-e_{j_{\ell}},j_{\ell})   \prod_{\ell=1}^n\omega( y_{\ell},k_{\ell}) \Big] \\
    & = \delta_{\{(x_{\ell}+e_{i}-e_{j_{\ell}},j_{\ell}),\ \ell\in [n] \},\{ (y_{\ell}, k_{\ell}), \ \ell\in [n]\} } = \delta_{\{(x_{\ell},j_{\ell}),\ \ell\in [n] \},\{ (y_{\ell}-e_{i}+e_{k_{\ell}}, k_{\ell}), \ \ell\in [n]\} }
 \\
 & =  \E \Big[\prod_{\ell=1}^n\omega( x_{\ell},j_{\ell})   \prod_{\ell=1}^n\omega( y_{\ell}-e_i+e_{k_{\ell}},k_{\ell}) \Big] =\langle \psi, \tau_{-i}\varphi  \rangle_{L^2(\mathcal{E},\pi)}.
\end{align*}
Define the operator $\nabla_i$  such that for each $\psi: \mathcal{E}\to\R$,  
$$\nabla_i \psi(\omega)=\tau_i\psi(\omega)-\psi(\omega).$$
Recall that the symmetric part of the generator is given by $$S=\frac{1}{2}(G+G^*)=-\frac{1}{2}\sum_{i=1}^d \nabla_i\nabla_{-i}=\frac{1}{2}\sum_{i=1}^d(\nabla_i+\nabla_{-i}).$$ 
Hence $S$ is a self-adjoint operator mapping $\mathcal{H}_n$ to itself. In particular, for $\psi(\omega)=\prod_{\ell=1}^n\omega( x_{\ell},j_{\ell})\in \mathcal{H}_n$, we have
\begin{align}
    S\psi(\omega)=\frac{1}{2}\sum_{i=1}^d\left( \prod_{\ell=1}^n \omega({x}_{\ell}+e_{i}-e_{j_{\ell}}, j_{\ell}) + \prod_{\ell=1}^n \omega({x}_{\ell}-e_{i}+e_{j_{\ell}}, j_{\ell})-2\prod_{\ell=1}^n\omega( x_{\ell},j_{\ell})\right).
\end{align}


\subsubsection{Decomposition of the asymmetric part}

In this section, we explain how the following decomposition of $A$ naturally arises. Recall that the asymmetric part of $G$ is given by $A=\frac{1}{2}(G-G^*)$. For $\psi:\mathcal{E}\to \R$, we have
$$A \psi ( \omega)=\frac{1}{2}\sum_{i=1}^{d}\omega(\0, i )  ( \psi ( \tau_{i} \omega)-\psi ( \tau_{i}^{-1} \omega) ). 
$$
We notice that, when multiplying a Rademacher random variable (which takes values in $\{-1,1\}$) by a product of i.i.d. Rademacher random variables, the product either gains a new term or loses one (the latter case occurs when there is a duplication). This hints a decomposition  of $A$ as below.

For each $i\in[d]$, let $a_i^{\dagger}: \mathcal{H}_n\to \mathcal{H}_{n+1}$ and $a_{i}: \mathcal{H}_n\to \mathcal{H}_{n-1}$ be respectively the \textit{creation operator} and the \textit{annihilation operator}, which are defined as follows. For each $(x_1,x_2,\cdots, x_n)\in U^n$ and $(j_1,j_2,\cdots, j_n)\in [d]^n$ such that $(x_k,j_k)\neq (x_{h}, j_h)$ for  $k\neq h$, let
\begin{align}
    a_{i}^{\dagger} & : \prod_{\ell=1}^n \omega( x_{\ell},j_{\ell} )  \!  \mapsto  \prod_{\ell=1}^n\big(1-\delta_{(x_{\ell}, j_{\ell}), (\0,i) }\big) \cdot  \omega( \0 , i )  \prod_{\ell=1}^n \omega( x_{\ell},j_{\ell} ),\\
    a_i & : \prod_{\ell=1}^n \omega( x_{\ell},j_{\ell} ) \!  \mapsto \sum_{r=1}^{n} \delta_{ (x_r,j_r), (\0,i)} \prod_{\ell\in [n]\setminus\{r\}} \omega( x_{\ell},j_{\ell} ).
\end{align}
We define the operators $A_+:\mathcal{H}_n\to \mathcal{H}_{n+1}$ and $A_{-}: \mathcal{H}_{n}\to \mathcal{H}_{n-1}$ such that
$$A_+ :=\frac{1}{2}\sum_{i\in [d]} a_i^{\dagger} (\nabla_{i}-\nabla_{-i})=\frac{1}{2}\sum_{i\in [d]} a_i^{\dagger} (\tau_{i}-\tau_{-i}), \quad A_{-} :=\frac{1}{2}\sum_{i\in [d]}  (\nabla_{-i}-\nabla_{i})a_{i}=\frac{1}{2}\sum_{i\in [d]}  (\tau_{-i}-\tau_{i})a_{i}.
$$ 
Let $\psi(\omega)=\prod_{\ell=1}^n\omega( x_{\ell},j_{\ell})\in \mathcal{H}_n$, where $(x_1,x_2,\cdots, x_n)\in U^n$ and $(j_1,j_2,\cdots, j_n)\in [d]^n$ such that $(x_k,j_k)\neq (x_{h}, j_h)$ for  $k\neq h$. 
    Then 
    \begin{align}
\nonumber A_+\psi(\omega)&=
\frac{1}{2}\sum_{i=1}^{d} \prod_{\ell\in[n]} (1-\delta_{(x_{\ell},\; j_{\ell}),(\0,\; i)}) \omega( \mathbf{0}, i ) \Big[ \prod_{\ell\in[n]} \omega( {x}_{\ell}+e_i-e_{j_{\ell}}, j_{\ell} )-\prod_{\ell\in[n]} \omega( {x}_{\ell}-e_i+e_{j_{\ell}}, j_{\ell} ) \Big],\\ 
  \nonumber      A_{-}\psi(\omega)&=  \frac{1}{2} \sum_{i\in [d]} \sum_{r\in[n]}\delta_{\left({x}_{r},\; j_{r}\right), (\0,\; i) } \Big[ \prod_{\ell\in [n]\setminus \{r\} } \omega({x}_{\ell}-e_i+e_{j_{\ell}},j_{\ell})- \prod_{\ell\in [n]\setminus \{r\} } \omega({x}_{\ell}+e_i-e_{j_{\ell}},j_{\ell})\Big].
    \end{align}
We notice that
    $$A=A_+-A_{-}.$$
    
Also note that $a_i^{\dagger}$ is the adjoint of $a_i$, yielding that $A_{-}$ is the adjoint of $A_+$. In fact, it is sufficient to prove the claim for monomials $\psi(\omega)=\prod_{\ell=1}^n\omega( x_{\ell},j_{\ell})\in \mathcal{H}_n$ and $\varphi(\omega)=\prod_{\ell=1}^{n+1}\omega( y_{\ell},k_{\ell})\in \mathcal{H}_{n+1}$.
We have
\begin{align*}
    \langle a_i^{\dagger} \psi, \varphi\rangle_{L^2(\mathcal{E},\pi)} & =\E\Big[\omega(\0,i)\prod_{\ell=1}^n\omega(x_{\ell},j_{\ell})\prod_{\ell=1}^{n+1}\omega( y_{\ell},k_{\ell})\Big]\\
    & = \sum_{r=1}^{n+1}\E\Big[\prod_{\ell=1}^n\omega(x_{\ell},j_{\ell}) \prod_{\ell\in [n+1]\setminus\{r\}}\omega(y_{\ell},k_{\ell})\Big]\E[\omega(\0,i) \omega(y_{r},k_r)] = \langle \psi, a_i\varphi\rangle_{L^2(\mathcal{E},\pi)}. 
    \end{align*}
    
\subsection{Induced operators on the bosonic Fock space}\label{sec:Fock}

Let $f=(f^{(0)}, f^{(1)},\cdots)$ and $g=(g^{(0)}, g^{(1)},\cdots)$ be two elements in $\bigoplus_{n\ge0} \ell^2(U,d,n)$, where we use the convention that $\ell^2(U,d,0)=\ell^2_{\rm sym}(U,d,0)=\ell^2_0(U,d,0):=\R$. We define the scalar product
\begin{align}
\label{scalar}
    \llangle f,g \rrangle = \sum_{n\ge 0} n!\langle f^{(n)}, g^{(n)}\rangle_{\ell^2(U,d,n)},
\end{align}
and let $\|f\|=\sqrt{\llangle f,f\rrangle}$ be the associated norm. 

Let $\Gamma_0(U,d)$ be the bosonic Fock space consisting of all $f\in \bigoplus_{n\ge0} \ell^2_{0}(U,d,n)$ such that $\|f\|<\infty$.
By Proposition \ref{isometry} and the Rademacher chaos expansion \eqref{chaos.decomp}, each element $f\in \Gamma_0(U,d)$ can be identified with a random variable in $L^2(\mathcal{E},\pi)$.  With a slight abuse of notation, we also define $G, A$ and $S$ as operators on $\Gamma_0(U,d)$ as below.

For each $n\ge 1$, let $a_i^{\dagger}: \ell^2_{0}(U,d,n) \to \ell^2_{0}(U,d,n+1) $ and $a_i: \ell^2_{0}(U,d,n) \to \ell^2_{0}(U,d,n-1)$ be defined such that
for $f\in \ell^2_{0}(U,d,n)$, 
\begin{align*}
    (a_i^{\dagger} f)_{j_{1:n+1}}(x_{1:n+1})&:=
     \frac{1}{n+1}\sum_{r=1}^{n+1} \delta_{ (x_{r},j_{r}), (\0,i)}\prod_{\ell \in[n+1]\setminus\{r\}}(1-\delta_{(x_{\ell}, j_{\ell}), (\0, i)})  f_{j_{1:n+1\setminus\{r\}}}(x_{1:n+1\setminus\{r\}}),\\
    (a_i f)_{j_{1:n-1}}(x_{1:n-1})& :=    n f_{j_1,j_2,\cdots,j_{n-1},i}(x_1,\cdots, x_{n-1}, \0).
\end{align*}

We notice that for every $f\in \ell^2_{0}(U,d,n)$,
\begin{align}\label{eq.Ia}
   a_i^{\dagger} I^{(f)}_n = I^{(a_i^{\dagger} f)}_{n+1}\quad\text{and}\quad a_i I^{(f)}_n = I^{(a_i f)}_{n-1}. 
\end{align}
Indeed, by the definitions of $a_i^\dagger f$ and $I_{n+1}^{(a_i^\dagger f)}$, and by the symmetry of $f$, we have 
\begin{align*} I_{n+1}^{(a_i^\dagger f)} &= \frac{1}{n+1} \sum_{\substack{ x\in U^{n+1}, j\in [d]^{n+1}}} \sum_{r=1}^{n+1} \delta_{(x_r,j_r),(\mathbf{0},i)} \prod_{\ell\in[n+1]\setminus\{r\}} \left(1-\delta_{(x_\ell, j_\ell),(\mathbf{0},i)}\right) f_{j_{1:n+1\setminus\{r\}}}(x_{1:n+1\setminus\{r\}}) \prod_{\ell=1}^{n+1}\omega(x_\ell,j_\ell)\\
&= \omega(\mathbf{0},i) \sum_{\substack{x_{1:n}\in U^n, j_{1:n}\in[d]^n}} \prod_{\ell=1}^n \left(1-\delta_{(x_\ell,j_\ell),(\mathbf{0},i)}\right) f_{j_{1:n}}(x_{1:n}) \prod_{\ell=1}^n\omega(x_\ell,j_\ell)= a_i^\dagger I_n^{(f)}. \end{align*} Similarly, \begin{align*} a_iI_n^{(f)} &= \sum_{\substack{x_{1:n}\in U^n, j_{1:n}\in[d]^n}} f_{j_{1:n}}(x_{1:n}) \sum_{r=1}^n \delta_{(x_r,j_r),(\mathbf{0},i)} \prod_{\ell\in[n]\setminus\{r\}}\omega(x_\ell,j_\ell)\\ &= n\sum_{\substack{x_{1:n-1}\in U^{n-1},j_{1:n-1}\in[d]^{n-1}}} f_{j_1,\ldots,j_{n-1},i}(x_1,\ldots,x_{n-1},\mathbf{0}) \prod_{\ell=1}^{n-1}\omega(x_\ell, j_\ell)
= I_{n-1}^{(a_if)}. \end{align*} 
This verifies \eqref{eq.Ia}.


For each $h\in [d]$, we define $\tau_{\pm h}:\ell_0^2(U,d,n) \to \ell_0^2(U,d,n)$ such that
$$(\tau_{\pm h} f)_{j_1,\cdots,j_n}(x_1,\cdots,x_n)=f_{j_1,\cdots,j_n}(x_1\mp (e_{h}-e_{j_1}),\cdots, x_n\mp (e_{h}-e_{j_n})).
$$  
It is clear that
\begin{align}\label{eq.tau}
   \tau_{\pm h} I^{(f)}_n = I^{(\tau_{\pm h} f)}_{n}.
\end{align}
Define $$A_+:=\frac{1}{2} \sum_{i=1}^d a_i^{\dagger}(\tau_i  -\tau_{-i}),\quad A_{-} := \frac{1}{2} \sum_{i=1}^d (\tau_{-i}  -\tau_{i})a_i ,\quad S:= \frac{1}{2}\sum_{i=1}^d (\tau_i  +\tau_{-i}-2) \quad\text{and}\quad G:=S+A_+-A_{-}.$$ 
From \eqref{eq.Ia} and \eqref{eq.tau}, we thus have
$$SI_{n}^{(f)}=I_n^{(Sf)}, A_+ I_n^{(f)}=I_{n+1}^{(A_+f)}\text{ and }   A_{-} I_n^{(f)}=I_{n-1}^{(A_{-}f)}.$$
For $(x_{1:n+1},j_{1:n+1})\in (U\times[d])^{n+1}$, 
we have
\begin{align} &(A_+f)_{j_{1:n+1}}(x_{1:n+1}) 
= \frac{1}{2(n+1)}\sum_{r=1}^{n+1}\delta_{ x_{r},\0} \prod_{\ell \in[n+1]\setminus\{r\}}(1-\delta_{(x_{\ell}, j_{\ell}), (\0, j_r)}) \\
\nonumber  & \cdot \Big(f_{j_{1:n+1\setminus\{r\}}}\big((x_{\ell}-  e_{j_r}+e_{j_{\ell}})_{\ell\in 1:n+1\setminus\{r\}}\big)-f_{j_{1:n+1\setminus\{r\}}}\big((x_{\ell}+  e_{j_r}-e_{j_{\ell}})_{\ell\in 1:n+1\setminus\{r\}}\big)\Big).\end{align}
Notice that $A_-=(A_+)^*$ on $\Gamma_0(U,d)$. In particular, for $f\in \ell_0^2(U,d,n)$ and $g\in \ell_0^2(U,d,n+1)$, we have
$$(n+1)\langle  A_+ f, g\rangle_{\ell^2(U,d,n+1)} =\langle  f, A_{-}g\rangle_{\ell^2(U,d,n)}.$$ 
For $(x_{1:n},j_{1:n})\in (U\times [d])^n$, we have
 \begin{align}
     &(Sf)_{j_{1:n}}(x_{1:n})=   \frac{1}{2}\sum_{h\in[d]}\Big( f_{j_{1:n}}\big((x_{\ell}+  e_{h}-e_{j_{\ell}})_{\ell\in 1:n}\big)+f_{j_{1:n}}\big((x_{\ell}-  e_{h}+e_{j_{\ell}})_{\ell\in 1:n}\big) -2f_{j_{1:n}}(x_{1:n})\Big).
 \end{align}

\subsection{Truncated Helmholtz equations}
\label{sec:truncated.helmholtz}

Recall that $\Gamma_0(U,d)$ is the bosonic Fock space defined in Section~\ref{sec:Fock}. For $n\ge1$, let
$$\Gamma_0^{(n)}(U,d):=\bigoplus_{k=0}^n\ell^2_0(U,d,k),$$
and let $Q_n$ be the orthogonal projection from $\Gamma_0(U,d)$ onto
$\Gamma_0^{(n)}(U,d)$. Define the truncated generator
$$G_n:=Q_nGQ_n.$$

Recall that $\phi(\omega)=\omega(\mathbf0,1)$. Let
$u=(0,u^{(1)},0,0,\ldots),$ where $u^{(1)}\in\ell^2_0(U,d,1)$ is given by $u_j^{(1)}(x)=\delta_{(x,j),(\mathbf0,1)}$, for $(x,j)\in U\times[d]$. Then $I_1^{(u^{(1)})}=\phi,$ and therefore
$$\langle \phi,(\lambda-G)^{-1}\phi\rangle_{L^2(\mathcal{E},\pi)}=\llangle u,(\lambda-G)^{-1}u\rrangle.$$

Similarly to Lemma 2.1 in \cite{LQSY2004}, we obtain the following result:

\begin{lemma}
\label{lemma.monotonicity}
For every $n\ge1$ and every $\lambda>0$,
\begin{equation}
\label{eq:truncation.monotonicity}
\llangle u,(\lambda-G_{2n})^{-1}u\rrangle\le\llangle u,(\lambda-G)^{-1}u\rrangle\le\llangle u,(\lambda-G_{2n+1})^{-1}u\rrangle.
\end{equation}
\end{lemma}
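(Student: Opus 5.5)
The plan is to follow the argument of Lemma~2.1 in \cite{LQSY2004}, which rests on a variational formula for the resolvent quadratic form when the generator has the form $G=S+A$ with $S\le 0$ self-adjoint and $A$ antisymmetric. For $\lambda>0$ and any such generator $L=S+A$ (with $A^*=-A$), we have
\begin{equation*}
\llangle u,(\lambda-L)^{-1}u\rrangle=\sup_{f}\Big\{2\llangle u,f\rrangle-\llangle f,(\lambda-S)f\rrangle-\llangle Af,(\lambda-S)^{-1}Af\rrangle\Big\},
\end{equation*}
which I would verify by writing $h=(\lambda-L)^{-1}u$ and completing the square in the $(\lambda-S)$-norm. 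The truncated generator $G_k=Q_kGQ_k$ has the same structure on $\Gamma_0^{(k)}(U,d)$: $S$ preserves the chaos degree, so $Q_kSQ_k$ is still self-adjoint and nonpositive, and $Q_kAQ_k$ remains antisymmetric because $A_-=A_+^*$. The formula therefore applies to each $G_k$, with $f$ ranging over $\Gamma_0^{(k)}$ and the $(\lambda-S)^{-1}$-norm of $Q_kAf$ computed only over the first $k$ chaoses.

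The core of the proof is an alternating structure in the chaos grading. Write $h=(h_1,h_2,\dots)$ for the solution of $(\lambda-S-A_++A_-)h=u$; the component $h_0$ vanishes since $u$ and all the operators have no constant part. The degree-$n$ equation reads
\begin{equation*}
(\lambda-S)h_n-A_+h_{n-1}+A_-h_{n+1}=u\,\delta_{n,1}.
\end{equation*}
I would then switch to the equivalent symmetric form of the variational problem in $(f_{\rm odd},f_{\rm even})$. Because $A$ only connects neighbouring degrees, $\llangle u,(\lambda-G)^{-1}u\rrangle$ can be written as
\begin{equation*}
\sup_{f_{\rm odd}}\inf_{g_{\rm even}}\Big\{2\llangle u,f\rrangle-\llangle f,(\lambda-S)f\rrangle+\llangle g,(\lambda-S)g\rrangle+2\llangle g,Af\rrangle\Big\},
\end{equation*}
obtained by eliminating the even components through their equations: minimising over $g$ gives $-\llangle Af,(\lambda-S)^{-1}Af\rrangle$, and $Af$ is even whenever $f$ is odd. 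Projecting onto the first $k$ chaoses corresponds to restricting both $f$ and $g$ to degrees at most $k$.

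The comparisons then follow from monotonicity. Truncating at an even level $2n$ restricts the odd variables $f$ to degrees at most $2n-1$, which can only lower the supremum. The even variable $g$ still sees all of $Af$, since $Af$ then lives in degrees at most $2n$. This yields the lower bound. Truncating at an odd level $2n+1$ keeps $f$ essentially free but forces $g$ into degrees at most $2n$, so the infimum over a smaller set is larger, giving the upper bound. To make this rigorous I would rather use the nested recursion of Schur complements, $\Lambda_{m+1}=A_-(\lambda-S+\Lambda_m)^{-1}A_+$, which represents the first-chaos quantity as $\llangle u^{(1)},(\lambda-S+\Lambda_k)^{-1}u^{(1)}\rrangle$. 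I would show by induction that $\Lambda_{2n}\ge\Lambda_\infty$ and $\Lambda_{2n+1}\le\Lambda_\infty$, via operator monotonicity of $X\mapsto A_-(\lambda-S+X)^{-1}A_+$ (it is order-reversing), and then pass to the limit.

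The main obstacle is this limit step: identifying $\llangle u,(\lambda-G)^{-1}u\rrangle$ with the limit of the truncated quantities, that is, justifying that $\Lambda_m$ converges in the appropriate sense given that $A_\pm$ are unbounded. I would first work on finitely supported test vectors and prove strong resolvent convergence of $G_k\to G$. Then I would use that the odd and even sequences are monotone and bounded between $0$ and $\llangle u,(\lambda-S)^{-1}u\rrangle$, which gives convergence of the quadratic forms; alternatively, the sup-inf characterisation yields the inequalities directly, without needing the limit.
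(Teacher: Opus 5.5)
Your proposal is correct and follows essentially the paper's route. The paper combines the same Schur-complement recursion $\Lambda_{m+1}=A_-(\lambda-S+\Lambda_m)^{-1}A_+$, with its order-reversing monotonicity, and the variational truncation argument of \cite{LQSY2004}; your odd/even sup--inf formulation spells that argument out (the paper only cites it), and it already gives both inequalities without any limit.

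One premise to correct: $A_\pm$ are not unbounded here. $G$ is bounded on $L^2(\mathcal E,\pi)$ because the rates are bounded and the shifts preserve $\pi$, so $\|A_\pm\|\le\|A\|\le d$ as compressions of $A$ between adjacent chaoses. The strong resolvent convergence $G_k\to G$ that you flag as the main obstacle is therefore routine.
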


\begin{proof}
Let $v_n=(v^{(0)}_n, v^{(1)}_n,\cdots, v^{(n)}_n,0,0,\cdots )\in \Gamma_{0}^{(n)}(U,d)$ be the solution to the truncated Helmholtz equation
\begin{equation}\label{tr.G.Poisson}
    (\lambda- G_n)v_n=u.
\end{equation}
Since $u$ is supported on the first chaos, the quantity $\llangle u,v_n\rrangle$ is determined by the first-chaos component:
\begin{align*}
\llangle u,v_n\rrangle
=\langle u^{(1)},v_n^{(1)}\rangle_{\ell^2(U,d,1)}.\end{align*}
The equation \eqref{tr.G.Poisson} is equivalent to the following system of equations:
$$
\left\{\begin{array} {l} {{{{\left( \lambda-S\right)} v_{n}^{( n )}-{ A}_{+} v_{n}^{( n -1)}=0,}}} \\ {{{{\left( \lambda-S\right)} v_{n}^{( k )}-{ A}_{+} v_n^{( k-1 )}+A_{-} v_{n}^{( k+1 )}=0,}}} \text{ for $2\le k\le n-1$},\\ {{{{\left( \lambda-S\right)} v_{n}^{( 1 )}+A_{-} v_{n}^{( 2 )}=u^{(1)}.}}} \\ \end{array} \right. 
$$
Solving the above system iteratively starting from $k=n$, we get
\begin{align} \label{eq:schur.rep}
\llangle u, v_n \rrangle=\langle u^{(1)}, v_n^{( 1 )} \rangle_{\ell^2(U,d,1)}=\langle u^{(1)}, ( \lambda-S+\Lambda_{n} )^{-1} u^{(1)}\rangle_{\ell^2(U,d,1)} 
\end{align}
where the self-adjoint operators $\Lambda_{j}$ are recursively defined as
\begin{align}\label{eq:def.Lambda}
\Lambda_{1} := 0,\quad\Lambda_{j+1}=A_{-} ( \lambda-S+{\Lambda}_{j} )^{-1} {A}_{+} \quad \text{for } \, j \geq1.\end{align}
Note that these operators are positive and leave each space $\ell_{0}^2(U,d,n)$ invariant, i.e., ${\Lambda}_{j} \ell_{0}^2(U,d,n) \subset  \ell_{0}^2(U,d,n),$ for all $j, \, n \in\mathbb{N}$.

Notice that $\lambda-S$ is non-negative. It is clear that $\Lambda_1=0\le \Lambda_3=A_{-}(\lambda-S+\Lambda_2)^{-1}{A}_+\le A_{-}(\lambda-S)^{-1}{A}_+=\Lambda_2$. By induction, $\Lambda_{m}\ge\Lambda_n$ if $\Lambda_{m-1}\le\Lambda_{n-1}$. In particular, for each $m\ge 1$, 
\begin{equation}
\label{eq:Lambda.parity.order}\Lambda_{2m-1}\le\Lambda_{2m+1}\le\Lambda_{2m+2}\le\Lambda_{2m}.
\end{equation}
The comparison of the truncated resolvents with the full resolvent follows from the same variational truncation argument as in \cite{LQSY2004}. Combining this argument with \eqref{eq:schur.rep} and \eqref{eq:Lambda.parity.order}, we obtain the result of the lemma.
\end{proof}

\section{Removal of hard-core}
\label{sec:hard-core.removal}

The diagonal-vanishing condition defining the bosonic Fock space $\Gamma_0(U,d)$, also called the hard-core constraint (see, e.g., Section 4 in \cite{LQSY2004}), substantially complicates the analysis, in particular the direct use of Fourier methods. In this section, we introduce an unconstrained analogue $\widetilde A_+$ of the operator $A_+$ and derive its explicit Fourier representation. We then compare the quadratic forms associated with $A_+$ and $\widetilde A_+$ and prove that the error produced by removing the hard-core constraint is controlled by the Dirichlet form associated with the operator $S$ (see Proposition \ref{prop:hc.form}).

\subsection{Unconstrained operator \texorpdfstring{$\widetilde A_+$}{\~A+}}

Recall that ${\ell}^2_{\rm sym}(U,d,n)$ is the space of square-summable symmetric functions on $(U\times [d])^n$  (they may not necessarily vanish on diagonals). 
For each $n\ge 0$, define $\widetilde a_i^\dagger:\ell^2_{\rm sym}(U,d,n)\rightarrow\ell^2_{\rm sym}(U,d,n+1)$ and $\widetilde a_i:\ell^2_{\rm sym}(U,d,n+1)\rightarrow\ell^2_{\rm sym}(U,d,n)$ by
\begin{align*}
(\widetilde a_i^\dagger f)_{j_{1:n+1}}(x_{1:n+1})&:=\frac{1}{n+1}\sum_{r=1}^{n+1}\mathbf 1_{\{(x_r,j_r)=(\mathbf0,i)\}}f_{j_{1:n+1\setminus\{r\}}}(x_{1:n+1\setminus\{r\}}),\\
(\widetilde a_i f)_{j_{1:n}}(x_{1:n})&:=(n+1)f_{i,j_{1:n}}(\mathbf 0,x_{1:n}).
\end{align*}
Define $\widetilde{A}_+: {\ell}^2_{\rm sym}(U,d,n) \to  {\ell}^2_{\rm sym}(U,d,n+1)$ by
\begin{equation}
\label{eq:def.unconstrained.total.creation}\widetilde A_+:=\sum_{i=1}^d\widetilde A_{+,i}\quad\text{with}\quad \widetilde A_{+,i}:=\frac12\widetilde a_i^\dagger(\tau_i-\tau_{-i}).
\end{equation}
Then
\begin{align}\label{eq:unconstrained.Aplus}    (\widetilde{A}_+f)_{j_{1:n+1}}(x_{1:n+1}) = 
  \frac{1}{2(n+1)}\sum_{r=1}^{n+1}  & \delta_{ x_{r},\0} \Big(f_{j_{1:n+1\setminus\{r\}}}\big((x_{\ell}-  e_{j_r}+e_{j_{\ell}})_{\ell\in 1:n+1\setminus\{r\}}\big)\\
 \nonumber \quad & -f_{j_{1:n+1\setminus\{r\}}}\big((x_{\ell}+  e_{j_r}-e_{j_{\ell}})_{\ell\in 1:n+1\setminus\{r\}}\big)\Big), \end{align}
which can be viewed as the unconstrained analogue of $A_+$ on ${\ell}^2_{\rm sym}(U,d,n)$. Note that the adjoint of $\widetilde{A}_+$ on $\Gamma_{\rm sym}(U,d):=\bigoplus_{n\ge 0}\ell^2_{\rm sym}(U,d,n)$ (equipped with the scalar product $\llangle \cdot ,\cdot \rrangle$ defined in \eqref{scalar}) is given by
$$(\widetilde{A}_+)^*:= \frac12\sum_{i=1}^d(\tau_{-i}-\tau_i)\widetilde a_i.$$
In particular, for $f\in \ell^2_{\rm sym}(U,d,n)$ and $g\in \ell^2_{\rm sym}(U,d,n+1)$, we have
\begin{align}\label{Atilde.adjoint}
\langle f,  (\widetilde{A}_+)^*g \rangle_{\ell^2_{\rm sym}(U,d,n)}=(n+1) \langle\widetilde A_+ f, g\rangle_{\ell^2_{\rm sym}(U,d,n+1)}. \end{align}
For $n\ge1$, define
$$E_n:=\left\{
(z_1,\ldots,z_n)\in (U\times[d])^n : z_r\neq z_s\text{ for all }r\neq s \right\}.$$
Let $P_n$ be multiplication by $\mathbf 1_{E_n}$ on $\ell^2_{\rm sym}(U,d,n)$.
Then
$\ell^2_0(U,d,n)=P_n\ell^2_{\rm sym}(U,d,n).$
For $z_{1:n}=(z_1,\ldots,z_n)\in (U\times[d])^n$, define
\begin{equation}
\label{eq:def.open.indicator}
\mathcal O_i^{(n)}(z_{1:n}):=\mathbf 1_{\{(\mathbf0,i)\notin\{z_1,\ldots,z_n\}\}},\quad \mathcal B_i^{(n)}(z_{1:n}):=\mathbf 1_{\{(\mathbf0,i)\in\{z_1,\ldots,z_n\}\}}.
\end{equation}
As multiplication operators on $\ell^2_{\rm sym}(U,d,n)$, we note that $\mathcal O_i^{(n)}+\mathcal B_i^{(n)}=I$ and $\mathcal O_i^{(n)}\mathcal B_i^{(n)}=0$. 

For $i\in [d]$, since $\tau_i$ and $\tau_{-i}$ preserve the set $E_n$, if $f\in\ell^2_0(U,d,n)$ then $(\tau_i-\tau_{-i})f$ is also supported on $E_n$.  Notice also that for $g\in\ell^2_{\rm sym}(U,d,n)$ supported on $E_n$, we have $a_i^\dagger g=\widetilde a_i^\dagger\mathcal O_i^{(n)}g$.
Thus
$$A_+ = \frac12 \sum_{i=1}^d\widetilde a_i^\dagger\mathcal O_i^{(n)}(\tau_i-\tau_{-i})\quad\text{on }\ell^2_0(U,d,n).$$
Define $B:\ell^2_0(U,d,n)\rightarrow (I-P_{n+1})\ell^2_{\rm sym}(U,d,n+1)$ by
\begin{equation}\label{eq:def.true}
B:=\sum_{i=1}^d B_i \quad\text{with}\quad  B_i:=\frac12\widetilde a_i^\dagger\mathcal B_i^{(n)}(\tau_i-\tau_{-i}).\end{equation}
Hence, on $\ell^2_0(U,d,n)$, we have
\begin{equation}
\label{eq:Aplus.blocked.decomposition}\widetilde A_+=A_++B.
\end{equation}
Note that for $f\in {\ell}^2_{0}(U,d,n),$
\begin{align*}
   & \big(Bf\big)_{j_{1:n+1}}(x_{1:n+1})=\frac{1}{2(n+1)}\sum_{r=1}^{n+1}\delta_{ x_{r},\0} \sum_{s\in [n+1]\setminus\{r\}} \delta_{(x_s,j_s),(\0,j_r)} \\
   & \cdot \Big(f_{j_r,j_{1:n+1\setminus\{r,s\}}}\big(\0,(x_{\ell}-  e_{j_r}+e_{j_{\ell}})_{\ell\in 1:n+1\setminus\{r,s\}}\big)-f_{j_r,j_{1:n+1\setminus\{r,s\}}}\big(\0,(x_{\ell}+  e_{j_r}-e_{j_{\ell}})_{\ell\in 1:n+1\setminus\{r,s\}}\big)\Big).
\end{align*}

\subsection{Fourier representation}
\label{subsec:fourier.trans}

For $\varphi : (\mathbb{Z}^{d})^n \to\mathbb{R}$, denote by $\mathcal{F}{\varphi}$ its Fourier transform, which is given by
$${\mathcal{F}{\varphi}} ( p_1,\cdots, p_n ) :=\sum_{x_1,\cdots,x_n\in\mathbb{Z}^{d}} {\rm e}^{\mathbf{i} (p_1 \cdot x_1+\cdots+p_n\cdot x_n)} \varphi ( x_1,\cdots, x_n)\quad \text{ for } p_{\ell}=(p_{\ell}^1,\cdots, p_{\ell}^d) \in\mathbb{T}^{d}, \ell\in [n],$$
where $\mathbb{T}^d=\R^d/(2\pi\Z^d)$ and $p\cdot x$ is the usual Euclidean dot product on $\mathbb{R}^d$. By the Parseval–Plancherel formula,
$$
\langle \varphi, \psi \rangle_{\ell^2((\mathbb{Z}^{d})^n)}=\frac{1} {( 2 \pi)^{nd}} \int_{\mathbb{T}^{nd}}  \mathcal{F}{\varphi} ( p ) \overline{\mathcal{F}{\psi} ( p )}  \rmd p. 
$$

For a function $f : (U\times [d])^n \to\mathbb{R}$, we define the Fourier transform ${\mathcal{F}{f}}_{j} (p)$ by first extending $f_j$ to $(\mathbb{Z}^{d})^n$ by setting it equal to zero outside $U^n$ and then taking the Fourier transform.

For $r\in[d]$, define the linear map $V_r:\mathbb Z^d\longrightarrow \mathbb Z^d$ such that for each $y=(y^h)_{h\in [d]}\in \Z^d$,
$$V_r y :=\left(y^1,\ldots,y^{r-1},-\sum_{h\in[d]\setminus\{r\}}y^h, y^{r+1},\ldots,y^d\right).$$
Then $V_r$ maps
$U_r:=\{y\in\mathbb Z^d:y^r=0\}$ bijectively onto $U=\big\{x\in\mathbb Z^d:\sum_{h=1}^d x^h=0\big\}.$
The transpose map satisfies
$$V_r^\top p=\left(p^1-p^r,\ldots,p^{r-1}-p^r, 0, p^{r+1}-p^r,\ldots,p^d-p^r\right).$$
We have  
\begin{align*}
{\mathcal{F}{f}}_{j_1,\cdots,j_n} ( p_1,\cdots, p_n ) & =\sum_{(x_1,\cdots,x_n)\in U^{n}} {\rm e}^{\mathbf{i} (p_1 \cdot x_1+\cdots+p_n\cdot x_n)} f_{j_1,\cdots,j_n} ( x_1,\cdots, x_n)\\
& =\sum_{(y_1,\cdots,y_n)\in U_{j_1}\times \cdots \times U_{j_n}} {\rm e}^{\mathbf{i} (p_1 \cdot V_{j_1} y_1+\cdots+p_n\cdot V_{j_n} y_n)} f_{j_1,\cdots,j_n} ( V_{j_1} y_1,\cdots, V_{j_n} y_n)\\
& = \sum_{(y_1,\cdots,y_n)\in U_{j_1}\times \cdots \times U_{j_n}} {\rm e}^{\mathbf{i} ( V_{j_1}^\top p_1 \cdot  y_1+\cdots+ V_{j_n}^\top p_n\cdot  y_n)} f_{j_1,\cdots,j_n} ( V_{j_1} y_1,\cdots, V_{j_n} y_n).
\end{align*}
For $j=(j_1,\ldots,j_n)\in[d]^n$, set
$$\mathbb T_j^d:=V_{j_1}^\top  \mathbb{T}^d\times\cdots\times V_{j_n}^\top  \mathbb{T}^d=\left\{(p_1,\ldots,p_n)\in(\mathbb T^d)^n : p_\ell^{j_\ell}=0\text{ for every }\ell\in[n]\right\}.$$

For $f\in\ell^2_{\rm sym}(U,d,n)$, define the parameterized Fourier
transform by
\begin{equation}
\label{eq:def.parameterized.fourier}
\widehat f_j(p):=\sum_{y_1\in U_{j_1}}\cdots\sum_{y_n\in U_{j_n}}\exp\left(\mathbf i\sum_{\ell=1}^n p_\ell\cdot y_\ell\right)f_j(V_{j_1}y_1,\ldots,V_{j_n}y_n)\quad\text{for } p\in\mathbb T_j^d.
\end{equation}
Note that $\widehat{f}_{j}(V_{j_1}^\top p_{1},\cdots, V_{j_n}^\top p_{n})={\mathcal{F}{f}}_{j_1,\cdots,j_n} ( p_1,\cdots, p_n).$
We also have the Parseval–Plancherel formula 
\begin{equation}
\label{eq:parseval.parameterized}
\langle f,g\rangle_{\ell^2(U,d,n)}=\frac{1}{(2\pi)^{n(d-1)}}\sum_{j\in[d]^n}\int_{\mathbb T_j^d}\widehat f_j(p)\overline{\widehat g_j(p)}\rmd p.
\end{equation}

\begin{lemma}\label{lem:Fourier}  For $f\in {\ell}_{\rm sym}^2(U,d,n)$, we have
\begin{align}   \label{Fou.S.psi}
    \widehat{Sf}_j(\xi)&=-2\sum_{h\in [d]}\sin^2\Big(\frac{1}{2}\sum_{\ell\in [n]}\xi_{\ell}^h \Big)\widehat{f_j}(\xi) \quad \text{for \ } \ j\in [d]^n, \xi\in \mathbb{T}_j^d; \\
  \label{Fou.A.psi} 
 \widehat{\widetilde{A}_+ f}_{j}(\xi) & = \frac{\mathbf{i}}{n+1} \sum_{r=1}^{n+1} \widehat{f}_{j_{1:n+1\setminus\{r\}}}(\xi_{1:n+1\setminus\{r\}}) \sin\Big(\sum_{\ell\in[n+1]\setminus\{r\}}\xi_{\ell}^{{j_r}} \Big)\quad\text{for $j\in [d]^{n+1}, \xi\in \mathbb{T}_j^d$}. 
  \end{align}
\end{lemma}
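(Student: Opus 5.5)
The plan is to compute both transforms directly from definition \eqref{eq:def.parameterized.fourier}, rewriting each shift operator in the parameterized coordinates $y_\ell\in U_{j_\ell}$ and then changing summation variables. The one structural fact needed is how a shift $x\mapsto x\pm(e_h-e_{j})$ on $U$ looks under $V_j$. Since $V_j$ keeps every coordinate except the $j$-th, the preimage $V_j^{-1}x$ is obtained by deleting (setting to $0$) the $j$-th coordinate of $x$. Hence
\[
V_j^{-1}\big(x\pm(e_h-e_j)\big)=V_j^{-1}x\pm e_h\mathbf 1_{\{h\neq j\}}.
\]
On $\mathbb T_j^d$ we have $\xi_\ell^{j_\ell}=0$, so $\xi_\ell\cdot e_h\mathbf 1_{\{h\ne j_\ell\}}=\xi_\ell^h$ in all cases. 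This is what will make the indicator disappear from the final formulas.

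\textbf{Formula \eqref{Fou.S.psi}.} Take $f\in\ell^2_{\rm sym}(U,d,n)$. By the definition of $\tau_{\pm h}$, the function $(\tau_{h}f)_j(V_{j_1}y_1,\ldots,V_{j_n}y_n)$ equals $f_j$ evaluated at the points $V_{j_\ell}(y_\ell-e_h\mathbf 1_{\{h\ne j_\ell\}})$. Substituting $y_\ell'=y_\ell-e_h\mathbf 1_{\{h\neq j_\ell\}}$, which is a bijection of $U_{j_\ell}$, in \eqref{eq:def.parameterized.fourier} gives
\[
\widehat{\tau_h f}_j(\xi)=e^{\mathbf i\sum_\ell \xi_\ell^h}\,\widehat f_j(\xi),\qquad \widehat{\tau_{-h} f}_j(\xi)=e^{-\mathbf i\sum_\ell \xi_\ell^h}\,\widehat f_j(\xi).
\]
Therefore $\tfrac12(\tau_h+\tau_{-h}-2)$ acts as multiplication by $\cos\theta_h-1=-2\sin^2(\theta_h/2)$, where $\theta_h=\sum_\ell\xi_\ell^h$. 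Summing over $h$ yields \eqref{Fou.S.psi}. For general $f\in\ell^2$ rather than finitely supported $f$, I would first argue with finitely supported $f$, so that all sums are finite, and then extend by density using Parseval \eqref{eq:parseval.parameterized}.

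\textbf{Formula \eqref{Fou.A.psi}.} Start from the explicit expression \eqref{eq:unconstrained.Aplus} and treat each summand $r$ separately. The factor $\delta_{x_r,\mathbf 0}$ means $y_r=0$ in the parameterization, so the sum over $y_r$ collapses and contributes the factor $e^{\mathbf i\xi_r\cdot 0}=1$. This explains why $\xi_r$ does not appear on the right-hand side. For the remaining indices $\ell\ne r$, the first term $f(\ldots,x_\ell-e_{j_r}+e_{j_\ell},\ldots)$ corresponds to the shift $y_\ell\mapsto y_\ell-e_{j_r}\mathbf 1_{\{j_r\ne j_\ell\}}$. The same change of variables as in the previous step produces the factor $e^{\mathbf i\theta_r}\widehat f_{j_{1:n+1\setminus\{r\}}}(\xi_{1:n+1\setminus\{r\}})$ with $\theta_r=\sum_{\ell\neq r}\xi_\ell^{j_r}$. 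The second term produces $e^{-\mathbf i\theta_r}$ times the same $\widehat f$. Combining with the prefactor $\frac1{2(n+1)}$ gives
\[
\frac{1}{2(n+1)}\big(e^{\mathbf i\theta_r}-e^{-\mathbf i\theta_r}\big)=\frac{\mathbf i}{n+1}\sin\theta_r,
\]
and summing over $r$ gives \eqref{Fou.A.psi}.

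I expect no real obstacle here. The only delicate point is bookkeeping of signs and of the case $h=j_\ell$ (respectively $j_r=j_\ell$), where the shift vanishes in $y$-coordinates. That case is harmless precisely because $\xi_\ell^{j_\ell}=0$ on $\mathbb T_j^d$, so the phase $e^{\pm\mathbf i\xi_\ell^h}$ is $1$ whether or not the shift is present. I would state this observation once as a preliminary identity and then apply it to both formulas.
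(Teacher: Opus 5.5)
Your proposal is correct and follows the paper's route. Both compute $\widehat{\tau_{\pm h}f}_j(\xi)=e^{\pm\mathbf i\sum_\ell\xi_\ell^h}\widehat f_j(\xi)$ via the change of variables $y_\ell\mapsto y_\ell\mp e_h$ (for $h\neq j_\ell$) in the parameterized coordinates, deduce \eqref{Fou.S.psi}, and obtain \eqref{Fou.A.psi} by collapsing the $y_r$-sum through $V_{j_r}y_r=\mathbf 0\iff y_r=\mathbf 0$ and applying the shift identity with $h=j_r$; your remark about extending from finitely supported $f$ by density is a rigor point the paper leaves implicit.
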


\begin{proof}
For $f\in {\ell}_{\rm sym}^2(U,d,n)$, $j\in [d]^n$ and $\xi\in \mathbb{T}_j^d$, we have
\begin{align*} 
\widehat{\tau_{\pm h} f}_j(\xi)& = \sum_{y\in U_j}(\tau_{\pm h}f)_j(V_{j_1}y_1,\ldots,V_{j_n}y_n){\rm e}^{\mathbf i\sum_{\ell=1}^n y_\ell\cdot\xi_\ell}.\\
& =\sum_{y\in U_j}f_j\big( V_{j_1}y_1\mp(e_h-e_{j_1}),\ldots, V_{j_n}y_n\mp(e_h-e_{j_n})\big){\rm e}^{\mathbf i\sum_{\ell=1}^n y_\ell\cdot\xi_\ell}.
\end{align*}
For each $\ell \in [n]$ with $h\neq j_\ell$ we have $V_{j_\ell}y_\ell\mp(e_h-e_{j_\ell})=V_{j_\ell}(y_\ell\mp e_h)$, and we change variable $y_\ell\mapsto y_\ell\mp e_h$ in $U_{j_\ell}$. Therefore
$$\widehat{\tau_{\pm h} f}_j(\xi)={\rm e}^{\pm\mathbf i\sum_{\ell=1}^n\xi_\ell^h}\widehat f_j(\xi).$$

Recall that
$(Sf)_j(x)=\frac12\sum_{h\in[d]}\left[(\tau_hf)_j(x)+(\tau_{-h}f)_j(x)-2f_j(x)\right].$
Hence 
\begin{align*}
\widehat{Sf}_j(\xi)=-2\sum_{h\in [d]}\sin^2\Big(\frac{1}{2}\sum_{\ell\in [n]}\xi_{\ell}^h \Big)\widehat{f_j}(\xi).
\end{align*}
Recall that for $x=x_{1:n+1}\in U^{n+1}, j=j_{1:n+1}\in [d]^{n+1}$,
$$(\widetilde A_+f)_j(x)=\frac{1}{2(n+1)}\sum_{r=1}^{n+1}\mathbf 1_{\{x_r=\mathbf0\}}\left[(\tau_{j_r}f)_{j_{1:n+1\setminus\{r\}}}(x_{1:n+1\setminus\{r\}})-(\tau_{-j_r}f)_{j_{1:n+1\setminus\{r\}}}(x_{1:n+1\setminus\{r\}})\right].$$
We write $j^{(r)}=j_{1:n+1\setminus\{r\}}$, $\xi^{(r)}=\xi_{1:n+1\setminus\{r\}}$. Taking the Fourier transform, and using the fact that $V_{j_r}y_r=\mathbf0$ if and only if $y_r=\mathbf0$, we obtain 
\begin{align*}\nonumber
 \widehat{\widetilde A_+f}_j(\xi)&=\frac{1}{2(n+1)}\sum_{r=1}^{n+1}\left[\widehat{\tau_{j_r}f}_{j^{(r)}}(\xi^{(r)})- \widehat{\tau_{-j_r}f}_{j^{(r)}}(\xi^{(r)})\right]   = \frac{\mathbf{i}}{n+1} \sum_{r=1}^{n+1}\widehat{f}_{j^{(r)}}(\xi^{(r)})   \sin\Big(\sum_{\ell\neq r}\xi_{\ell}^{j_r} \Big).
    \end{align*}
\end{proof}

\subsection{Quadratic forms related to \texorpdfstring{$\widetilde A_+$}{\~A+}}
For each $n\ge 1$, let $R:\ell^2_{\rm sym}(U,d,n)\to \ell^2_{\rm sym}(U,d,n)$ be a \textit{diagonal operator} with \textit{Fourier multipliers} $(\mathfrak r^{(n)})_{n\ge 1}$. That is, for every $n\ge 1$ and for every $f\in \ell^2_{\rm sym}(U,d,n)$, we have
\begin{equation*}
\widehat{Rf}_j(p)=\mathfrak r_{j}^{(n)}(p)\widehat f_j(p)\quad\text{for } j\in[d]^{n},\, p\in\mathbb T_j^d.
\end{equation*}
We suppose that:
\begin{itemize}
    \item $R$ is positive, i.e., for every $n\ge 1$ and for every non-zero function $f\in \ell^2_{\rm sym}(U,d,n)$,
   \begin{align*}
       \langle f, Rf \rangle_{\ell^2(U,d,n)} = \sum_{x\in U^n, j\in [d]^n} f_{j}(x)Rf_j(x) > 0.
   \end{align*}

    \item for each $n\ge 1$, $\mathfrak r^{(n)}$ is symmetric, i.e., for every $j=(j_1,\cdots, j_n)\in [d]^n,\, p=(p_1,\cdots, p_n)\in \mathbb{T}^d_{j}$ and for every permutation $\sigma$ of $[n]$,
\begin{align}\label{eq:mul.symm}
    \mathfrak r_{j_{\sigma(1)}, \cdots, j_{\sigma(n)} }^{(n)}(p_{\sigma(1)}, \cdots, p_{\sigma(n)})=\mathfrak r_{j_1,\cdots, j_n}^{(n)}(p_1, \cdots ,p_n).
\end{align}
\end{itemize}

Fix some $f\in {\ell}^2_{0}(U,d,n)$.
Using \eqref{Atilde.adjoint}, the Parseval–Plancherel formula \eqref{eq:parseval.parameterized} and Lemma~\ref{lem:Fourier}, we have 
\begin{align*}
  &  \langle f, (\widetilde{A}_{+})^* R \widetilde{A}_+ f \rangle_{\ell^2(U,d,n)} = (n+1)\langle \widetilde{A}_+ f,  R \widetilde{A}_+ f \rangle_{\ell^2(U,d,n+1)} = (n+1)\sum_{j\in [d]^{n+1}}\sum_{x\in U^{n+1}} \widetilde{A}_+f_j(x)\cdot  R \widetilde{A}_+ f_j(x)\\
  &= \frac{(n+1)}{(2\pi)^{(n+1)(d-1)}} \sum_{j\in [d]^{n+1}}\int_{\mathbb{T}_j^d}\overline{\widehat{\widetilde{A}_+ f}_j(p)} \widehat{R \widetilde{A}_+f}_j(p) \rmd p  =\frac{(n+1)}{(2\pi)^{(n+1)(d-1)}} \sum_{j\in [d]^{n+1}}\int_{\mathbb{T}_j^d}\mathfrak{r}_{j}^{(n+1)}(p) |\widehat{\widetilde{A}_+ f}_j(p)|^2  \rmd p   \\
    & = \frac{1}{(n+1)(2\pi)^{(n+1)(d-1)}}  \sum_{j\in [d]^{n+1}}\int_{\mathbb{T}_j^d}  \mathfrak{r}_{j}^{(n+1)}(p)  \left|\sum_{r=1}^{n+1} \widehat{f}_{j_{1:n+1\setminus\{r\}}}(p_{1:n+1\setminus\{r\}}) \sin\Big(\sum_{\ell\neq r}p_{\ell}^{j_r} \Big) \right|^2\rmd p.
\end{align*}

We now decompose the quadratic form $\langle f, (\widetilde{A}_{+})^* R \widetilde{A}_+ f \rangle_{\ell^2(U,d,n)}$ into a diagonal part and an off-diagonal part:
\begin{equation}
\label{eq:def.VDiag.VOff.forms}
\left\langle f, (\widetilde A_+)^*R\widetilde A_+f\right\rangle_{\ell^2(U,d,n)}=V_{\rm Diag}(R,f)+V_{\rm Off}(R,f),
\end{equation}
where \begin{align*}
V_{\rm Diag}(R,f)&:=\frac{1}{(n+1)(2\pi)^{(n+1)(d-1)}}\sum_{j\in[d]^{n+1}}\int_{\mathbb T_j^d}\mathfrak{r}_{j}^{(n+1)}(p)\nonumber\\
&\quad\times\sum_{r=1}^{n+1}\left|\widehat f_{j_{1:n+1\setminus\{r\}}}(p_{1:n+1\setminus\{r\}})\right|^2\sin^2\left(\sum_{\ell\in[n+1]\setminus\{r\}}p_\ell^{j_r}\right)\rmd p,\\
V_{\rm Off}(R,f)&:=\frac{1}{(n+1)(2\pi)^{(n+1)(d-1)}}\sum_{j\in[d]^{n+1}}\int_{\mathbb T_j^d}\mathfrak{r}_{j}^{(n+1)}(p)\nonumber\\&\times\sum_{\substack{r,s=1\\ r\neq s}}^{n+1}\overline{\widehat f_{j_{1:n+1\setminus\{r\}}}(p_{1:n+1\setminus\{r\}})}\widehat f_{j_{1:n+1\setminus\{s\}}}(p_{1:n+1\setminus\{s\}})\sin\left(\sum_{\ell\in[n+1]\setminus\{r\}}p_\ell^{j_r}\right)\sin\left(\sum_{\ell\in[n+1]\setminus\{s\}}p_\ell^{j_s}\right)\rmd p.
\end{align*}

By symmetry, we obtain 
\begin{align}\label{eq:VDiag}
    V_{\rm Diag }(R, f)=\frac{1}{(2\pi)^{(n+1)(d-1)}} \sum_{j\in [d]^{n+1}} \int_{\mathbb{T}_j^d}  \mathfrak{r}_{j}^{(n+1)}(p_{1:n+1})|\widehat{f}_{j_{1:n}}(p_{1:n})|^2 \sin^2\Big(\sum_{\ell\in [n]}p_{\ell}^{j_{n+1}} \Big)\rmd p_{1:n+1}.
\end{align}
and
\begin{align}\label{eq:VOff} \nonumber
    V_{\rm Off }(R, f)= \frac{ n}{(2\pi)^{(n+1)(d-1)}}  \operatorname{Re} \sum_{j\in [d]^{n+1}}\int_{\mathbb{T}_j^d} & \mathfrak{r}_{j}^{(n+1)}(p) \overline{\widehat{f}_{j_{1:n}}(p_{1:n})}\widehat{f}_{j_{1:n+1\setminus\{n\}}}(p_{1:n+1\setminus\{n\}}) \\
    &\cdot \sin\Big(\sum_{\ell\in [n+1]\setminus\{n\}}p_{\ell}^{j_{n}} \Big)\sin\Big(\sum_{\ell\in [n]}p_{\ell}^{j_{n+1}} \Big)\rmd p_{1:n+1}.
    \end{align}

\begin{lemma}\label{lem:V.diag}
Let $R$ be a positive diagonal operator with Fourier multipliers $(\mathfrak r^{(n)})_{n\ge 1}$ satisfying \eqref{eq:mul.symm}.
Let $\widetilde R$ be a positive diagonal operator with Fourier multipliers $(\widetilde{\mathfrak r}^{(n)})_{n\ge 1}$ such that, for every $j\in[d]^n$ and every $p\in\mathbb T_j^d$,
\begin{align}
\label{eq:diag.criterion}
&\frac{1}{(2\pi)^{d-1}} \sum_{i=1}^d \sin^2\Big(\sum_{\ell=1}^n p_\ell^i\Big) \int_{\mathbb T_i^d} \mathfrak{r}_{j,i}^{(n+1)}(p,q)\rmd q \le \widetilde{\mathfrak{r}}_{j}^{(n)}(p).
\end{align}
Then
\begin{equation}
\label{eq:diag.criterion.forms}
V_{\rm Diag}(R,f) \le \langle f,\widetilde R f\rangle_{\ell^2(U,d,n)}.
\end{equation}
If \eqref{eq:diag.criterion} holds with the opposite inequality, then \eqref{eq:diag.criterion.forms} also holds with the opposite inequality.
\end{lemma}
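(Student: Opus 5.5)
The plan is to start from the symmetrized formula \eqref{eq:VDiag} for $V_{\rm Diag}(R,f)$, split off the last pair of variables, and integrate it out first. Write $j=(j_{1:n},i)$ with $j_{1:n}\in[d]^n$ and $i:=j_{n+1}\in[d]$, and $p=(p_{1:n},q)$ with $q:=p_{n+1}$. Since $\mathbb T_j^d=\mathbb T_{j_{1:n}}^d\times\mathbb T_i^d$ is a product set (the constraint $p_\ell^{j_\ell}=0$ acts coordinatewise), Fubini applies. The only factors depending on $q$ are the multiplier $\mathfrak r^{(n+1)}_{j_{1:n},i}(p_{1:n},q)$, while $|\widehat f_{j_{1:n}}(p_{1:n})|^2$ and $\sin^2(\sum_{\ell\le n}p_\ell^{i})$ do not involve $q$. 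Hence
\begin{align*}
V_{\rm Diag}(R,f)=\frac{1}{(2\pi)^{n(d-1)}}\sum_{j_{1:n}\in[d]^n}\int_{\mathbb T_{j_{1:n}}^d}|\widehat f_{j_{1:n}}(p_{1:n})|^2\,
\Big[\frac{1}{(2\pi)^{d-1}}\sum_{i=1}^d\sin^2\Big(\sum_{\ell=1}^n p_\ell^i\Big)\int_{\mathbb T_i^d}\mathfrak r^{(n+1)}_{j_{1:n},i}(p_{1:n},q)\rmd q\Big]\rmd p_{1:n}.
\end{align*}
All integrands are nonnegative: positivity of $R$ means the multipliers are nonnegative a.e. For a multiplication operator, positivity of the form is equivalent to the multiplier being nonnegative a.e. on each $\mathbb T^d_j$ (symmetrized, which is harmless by \eqref{eq:mul.symm}). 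So Tonelli justifies the interchange, even if some quantities are infinite.

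The bracket is exactly the left side of \eqref{eq:diag.criterion} evaluated at $(j_{1:n},p_{1:n})$. Bounding it pointwise by $\widetilde{\mathfrak r}^{(n)}_{j_{1:n}}(p_{1:n})$ and multiplying by the nonnegative weight $|\widehat f_{j_{1:n}}|^2$ gives
\begin{align*}
V_{\rm Diag}(R,f)\le\frac{1}{(2\pi)^{n(d-1)}}\sum_{j_{1:n}}\int_{\mathbb T^d_{j_{1:n}}}\widetilde{\mathfrak r}^{(n)}_{j_{1:n}}(p_{1:n})\,|\widehat f_{j_{1:n}}(p_{1:n})|^2\rmd p_{1:n}.
\end{align*}
By the Parseval–Plancherel identity \eqref{eq:parseval.parameterized}, applied to $f$ and $\widetilde Rf$ using $\widehat{\widetilde Rf}_j=\widetilde{\mathfrak r}^{(n)}_j\widehat f_j$, this right-hand side equals $\langle f,\widetilde Rf\rangle_{\ell^2(U,d,n)}$. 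For the reverse inequality the same chain runs with $\ge$, since the weight is still nonnegative.

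The only step needing care is the bookkeeping of normalizations. The prefactor $(2\pi)^{-(n+1)(d-1)}$ must split as $(2\pi)^{-n(d-1)}\cdot(2\pi)^{-(d-1)}$, the first factor matching Parseval on $n$ variables and the second appearing in \eqref{eq:diag.criterion}. One must also check that the sum over $j_{n+1}=i$ in \eqref{eq:VDiag} corresponds to the sum over $i$ in \eqref{eq:diag.criterion}, with the sine argument $\sum_{\ell\le n}p^{j_{n+1}}_\ell$ matching $\sum_\ell p_\ell^i$. Both checks are direct once \eqref{eq:VDiag} is taken as given.
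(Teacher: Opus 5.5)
Your proposal is correct and is the paper's own argument: integrate \eqref{eq:VDiag} first in $q=p_{n+1}\in\mathbb T^d_{j_{n+1}}$, recognize the resulting bracket as the left side of \eqref{eq:diag.criterion}, bound it pointwise against the nonnegative weight $|\widehat f_{j_{1:n}}|^2$, and conclude with the Parseval--Plancherel formula. Your extra checks, namely the Tonelli justification and the splitting of the normalization $(2\pi)^{-(n+1)(d-1)}$, spell out steps the paper leaves implicit.
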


\begin{proof}
    To prove the criterion, integrate \eqref{eq:VDiag}
first in the last variable $q=p_{n+1}\in\mathbb T_{j_{n+1}}^d$. This gives
\begin{align*}
V_{\rm Diag}(R,f) &= \frac{1}{(2\pi)^{n(d-1)}} \sum_{j\in[d]^n} \int_{\mathbb T_j^d} |\widehat f_j(p)|^2 \frac{1}{(2\pi)^{d-1}} \sum_{i=1}^d \sin^2\Big(\sum_{\ell=1}^n p_\ell^i\Big) \int_{\mathbb T_i^d} \mathfrak r_{j,i}^{(n+1)}(p,q)\rmd q\rmd p.
\end{align*}
Using \eqref{eq:diag.criterion}, we have
$$V_{\rm Diag}(R,f)\le \frac{1}{(2\pi)^{n(d-1)}} \sum_{j\in[d]^n} \int_{\mathbb T_j^d} |\widehat f_j(p)|^2 \widetilde{\mathfrak r}_{j}^{(n)}(p)\rmd p.$$
Applying the Parseval–Plancherel formula \eqref{eq:parseval.parameterized}, we obtain \eqref{eq:diag.criterion.forms}. The proof with the opposite inequality is identical.
\end{proof}

\subsection{Bounds for quadratic forms related to \texorpdfstring{$A_+$}{A+}}
\label{subsec:hard-core.form}
Throughout this subsection, we fix $d=3$ and let $R$ be a positive diagonal operator with multipliers $(\mathfrak{r}^{(n)})_{n\ge 1}$, which depend only on \textit{total momentum}, i.e., for each $n\ge 1$, there exists a non-negative measurable function $r_{n}$ on $\mathbb T^3$ such that
\begin{equation}
\label{eq:R.momentum}
\mathfrak{r}_{j}^{(n)}(p)= r_{n}\left(\sum_{\ell=1}^{n}p_\ell\right)\quad\text{for } j\in [3]^n,\, p\in \mathbb{T}^3_j. \end{equation}
For $p=(p^1,p^2,p^3)\in\mathbb T^3$, set
$$\theta(p):=2\sum_{h=1}^3\sin^2\left(\frac{p^h}{2}\right).$$
We assume that, for some constants
$C_R<\infty$ and $\kappa\ge1$,
\begin{equation}
\label{eq:R.res}
0\le r_{n+1}(p)
\le
\frac{C_R}{\lambda+\kappa\theta(p)}\quad\text{for } p\in\mathbb T^3 .
\end{equation}
In this subsection, we compare the quadratic form $\langle f, A_- R A_+ f \rangle_{\ell^2(U,3,n)} $ with $\langle f, (\widetilde{A}_+)^* R \widetilde{A}_+ f \rangle_{\ell^2(U,3,n)}$.

\begin{lemma}
\label{lem:trace.estimate}
Fix $i\in[3]$. There exists $C<\infty$ such that, for every $p=(p^1,p^2,p^3)\in\mathbb T^3$ satisfying $\sin(p^i/2)\neq0$, every $\lambda>0$, and every $\kappa\ge1$, we have
\begin{equation*}
\sin^2(p^i)\left(\int_{\mathbb T_i^3}\frac{\rmd q}{\lambda+\kappa\theta(p+q)}\right)\left(\int_{\mathbb T_i^3}\frac{\rmd q}{\theta(p+q)}\right)\le\frac{C}{\kappa}.
\end{equation*}
\end{lemma}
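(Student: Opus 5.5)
The plan is to reduce both integrals to explicit two-dimensional lattice Green's function integrals, using the fact that on $\mathbb T_i^3$ the $i$-th coordinate of $q$ vanishes. Set $a:=2\sin^2(p^i/2)\in(0,2]$, which is positive by hypothesis. For $q\in\mathbb T_i^3$ we have $q^i=0$, so
\[
\theta(p+q)=a+\theta_2(p'+q'),
\]
where $p',q'\in\mathbb T^2$ collect the two coordinates other than $i$, and $\theta_2(s):=2\sum_{h}\sin^2(s^h/2)$ sums over these two coordinates. Since $\mathbb T_i^3$ is identified with $\mathbb T^2$ via $q\mapsto q'$, translation invariance of Lebesgue measure on $\mathbb T^2$ removes the shift by $p'$. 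Hence
\[
\int_{\mathbb T_i^3}\frac{\rmd q}{\lambda+\kappa\theta(p+q)}=\frac1\kappa\int_{\mathbb T^2}\frac{\rmd s}{b+\theta_2(s)},\qquad b:=\frac{\lambda}{\kappa}+a\ \ge a,
\]
and the second integral is $\int_{\mathbb T^2}\rmd s/(a+\theta_2(s))$.

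The key estimate is the elementary two-dimensional bound: for all $b>0$,
\[
J(b):=\int_{\mathbb T^2}\frac{\rmd s}{b+\theta_2(s)}\le C\Bigl(1+\log\bigl(1+b^{-1}\bigr)\Bigr).
\]
To prove it, represent $\mathbb T^2$ by $[-\pi,\pi]^2$ and use $\sin^2(x/2)\ge x^2/\pi^2$ for $|x|\le\pi$. This gives $\theta_2(s)\ge c|s|^2$, so $J(b)\le\int_{|s|\le \pi\sqrt2}\rmd s/(b+c|s|^2)$. In polar coordinates this is $\frac{\pi}{c}\log\bigl(1+2\pi^2c/b\bigr)$, which yields the claim. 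Because $J$ is decreasing in $b$ and $b\ge a$, both integrals are bounded in terms of $a$ alone:
\[
\int_{\mathbb T_i^3}\frac{\rmd q}{\lambda+\kappa\theta(p+q)}\le \frac{C}{\kappa}\bigl(1+\log(1+a^{-1})\bigr),\qquad \int_{\mathbb T_i^3}\frac{\rmd q}{\theta(p+q)}\le C\bigl(1+\log(1+a^{-1})\bigr).
\]
These bounds are uniform in $\lambda>0$ and $\kappa\ge1$.

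For the prefactor, note that $\sin^2(p^i)=4\sin^2(p^i/2)\cos^2(p^i/2)\le 2a$. Multiplying the three bounds gives
\[
\sin^2(p^i)\left(\int_{\mathbb T_i^3}\frac{\rmd q}{\lambda+\kappa\theta(p+q)}\right)\left(\int_{\mathbb T_i^3}\frac{\rmd q}{\theta(p+q)}\right)\le \frac{C'}{\kappa}\,a\bigl(1+\log(1+a^{-1})\bigr)^2 .
\]
The function $a\mapsto a(1+\log(1+1/a))^2$ is bounded on $(0,2]$, since it tends to $0$ as $a\to0$, so the lemma follows.

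The only genuinely delicate point is that the logarithmic singularity of the two-dimensional Green's function appears twice, once in each integral. This is harmless only because the prefactor vanishes linearly in $a$. No cancellation between the factors is needed; the whole argument rests on the uniform bound for $J(b)$.
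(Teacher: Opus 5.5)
Your proposal is correct and follows essentially the same route as the paper. Like the paper, you translate away the coordinates $q^h$, $h\neq i$; bound the two-dimensional integral by $C(1+|\log a|)$ using $\sin^2(x/2)\ge x^2/\pi^2$ and polar coordinates; reuse that bound for the $\lambda,\kappa$ integral after factoring out $1/\kappa$; and absorb the squared logarithm through $\sin^2(p^i)\le 4\sin^2(p^i/2)$.
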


\begin{proof}
Set
$$b:=\sin^2\left(\frac{p^i}{2}\right).$$
Then $0<b\le1$. Let $\{j,k\}=[3]\setminus\{i\}$. Since $q\in\mathbb T_i^3$, we have $q^i=0$. Using the translation $x=p^j+q^j$ and $y=p^k+q^k$,
which preserves Haar measure on $\mathbb T_i^3$, we get
\begin{equation}
\label{eq:theta.lower}
\theta(p+q) =2b+2\sin^2\left(\frac{x}{2}\right)+2\sin^2\left(\frac{y}{2}\right) \quad\text{with}\quad (x,y)\in[-\pi,\pi)^2 .
\end{equation}

We first prove that
\begin{equation}
\label{eq:log.lb}
\int_{\mathbb T_i^3}\frac{\rmd q}{\theta(p+q)}\le C\left(1+|\log b|\right).
\end{equation}
Using \eqref{eq:theta.lower} and the fact that $\sin^2\left(\frac{t}{2}\right)\ge\frac{t^2}{\pi^2}$ for every $t\in[-\pi,\pi]$, we have
$$\int_{\mathbb T_i^3}\frac{\rmd q}{\theta(p+q)}=\int_{[-\pi,\pi)^2}\frac{\rmd x\rmd y}{2b+2\sin^2(x/2)+2\sin^2(y/2)}\le C\int_{[-\pi,\pi)^2}\frac{\rmd x\rmd y}{b+x^2+y^2}.$$
The square $[-\pi,\pi)^2$ is contained in the disk of radius $2\pi$.
Using polar coordinates on this disk, we obtain
$$\int_{[-\pi,\pi)^2}\frac{\rmd x\rmd y}{b+x^2+y^2}\le C\int_0^{2\pi}\frac{r\rmd r}{b+r^2}\le C\left(1+|\log b|\right).$$
This proves \eqref{eq:log.lb}.
Moreover,
$$\int_{\mathbb T_i^3}\frac{\rmd q}{\lambda+\kappa\theta(p+q)}=\frac{1}{\kappa}\int_{\mathbb T_i^3}\frac{\rmd q}{\lambda/\kappa+\theta(p+q)}\le\frac{C}{\kappa}(1+|\log b|).$$
Finally, using the fact that $\sin^2(p^i)=4\sin^2\left(\frac{p^i}{2}\right)\cos^2\left(\frac{p^i}{2}\right)\le 4b$ and that $\sup_{0<b\le1}b\left(1+|\log b|\right)^2<\infty$, we obtain
$$\sin^2(p^i)\left(\int_{\mathbb T_i^3}\frac{\rmd q}{\lambda+\kappa\theta(p+q)}\right)\left(\int_{\mathbb T_i^3}\frac{\rmd q}{\theta(p+q)}\right)\le\frac{C}{\kappa}b(1+|\log b|)^2\le\frac{C}{\kappa}.$$
This proves the lemma.
\end{proof}

\begin{proposition}\label{prop:hc.form}
Assume that $R$ is a positive diagonal operator whose multipliers $(\mathfrak r^{(n)})_{n\ge 1}$ satisfy \eqref{eq:R.momentum} and \eqref{eq:R.res}. Then there exists $C<\infty$, depending only on $C_R$, such that, for every $n\ge 1$ and $f\in\ell^2_0(U,3,n)$,
\begin{equation}\label{eq:hc.form}
\left|\left\langle f, A_-R A_+f\right\rangle_{\ell^2(U,3,n)}-\left(V_{\rm Diag}(R,f)+V_{\rm Off}(R,f)\right)\right|\le\frac{Cn}{\kappa}\langle f,(-S)f\rangle_{\ell^2(U,3,n)} .
\end{equation}
\end{proposition}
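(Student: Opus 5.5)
The plan is to use the decomposition \eqref{eq:Aplus.blocked.decomposition}, $\widetilde A_+=A_++B$ on $\ell^2_0(U,3,n)$, together with the orthogonality of ranges. First, $A_+f$ is supported on $E_{n+1}$ while $Bf$ is supported on its complement. Second, $R$ is a Fourier multiplier depending only on total momentum by \eqref{eq:R.momentum}, so it commutes with translations but does \emph{not} preserve $E_{n+1}$. Hence I cannot drop cross terms. I would instead write
$$\langle \widetilde A_+f,R\widetilde A_+f\rangle-\langle A_+f,RA_+f\rangle=2\langle A_+f,RBf\rangle+\langle Bf,RBf\rangle,$$
with all pairings in $\ell^2(U,3,n+1)$ and multiplied by $(n+1)$, using \eqref{Atilde.adjoint} and $(n+1)\langle A_+f,g\rangle=\langle f,A_-g\rangle$. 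By Cauchy--Schwarz in the $R$-inner product, the error is bounded by $2\|R^{1/2}A_+f\|\|R^{1/2}Bf\|+\|R^{1/2}Bf\|^2$. The main task is therefore the estimate
$$(n+1)\langle Bf,RBf\rangle_{\ell^2(U,3,n+1)}\le \frac{Cn}{\kappa}\langle f,(-S)f\rangle .$$
Given this, the cross term is handled by the same estimate combined with a crude bound $(n+1)\langle A_+f,RA_+f\rangle\le C\langle f,(-S)f\rangle$. That crude bound itself follows from $\|R^{1/2}A_+f\|\le\|R^{1/2}\widetilde A_+f\|+\|R^{1/2}Bf\|$ and a Lemma~\ref{lem:V.diag}-type bound for the $\widetilde A_+$ form. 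If the $\sqrt{n/\kappa}$ loss from the cross term is a problem, I would instead absorb it via $2ab\le \delta a^2+\delta^{-1}b^2$ carefully, or compute $\langle A_+f,RBf\rangle$ directly by the same method as below, getting $Cn/\kappa$.

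To bound $\langle Bf,RBf\rangle$, I would use the explicit formula for $Bf$. It is a sum over ordered pairs $r\ne s$ of terms supported on $\{x_r=x_s=\mathbf0,\ j_r=j_s\}$, and each term equals $\frac{1}{2(n+1)}\big((\tau_{i}-\tau_{-i})f\big)$ evaluated with one slot frozen at $(\mathbf 0,i)$, where $i=j_r$. By symmetry of $f$, $Bf$ is $n$ times a single symmetrized term, up to the $1/(n+1)$ factor. In Fourier variables, freezing a slot at $(\mathbf0,i)$ corresponds to integrating that momentum over $\mathbb T_i^3$. Doubling the slot then produces a function whose transform in the two doubled variables depends only on their sum. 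The key quantity is then the "trace" $g_i(p_{1:n-1}):=\frac{1}{(2\pi)^2}\int_{\mathbb T_i^3}\sin\big(\sum p^i\big)\widehat f_{j,i}(p_{1:n-1},q)\rmd q$. I would bound $\langle Bf,RBf\rangle$ by $\int r_{n+1}(\text{total})\,|g_i|^2$ over the remaining variables and the two extra momenta, using \eqref{eq:R.res}. Integrating out the two extra momenta (which enter $r_{n+1}$ only through their sum plus the rest) gives a factor $\int_{\mathbb T_i^3}\frac{\rmd q}{\lambda+\kappa\theta(P+q)}$, where $P$ is the total momentum.

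Then I would apply Cauchy--Schwarz to $g_i$ with weight $\theta$:
$$|g_i|^2\le \sin^2(P^i)\int_{\mathbb T_i^3}\frac{\rmd q}{\theta(P+q)}\int_{\mathbb T_i^3}\theta(P+q)|\widehat f(\cdot,q)|^2\rmd q .$$
Combined with Lemma~\ref{lem:trace.estimate}, the product of the two trace integrals against $\sin^2(P^i)$ is at most $C/\kappa$. What remains is $\int\theta(\text{total})|\widehat f|^2$, which by \eqref{Fou.S.psi} and Parseval \eqref{eq:parseval.parameterized} equals $\langle f,(-S)f\rangle$. Careful counting of the combinatorial factors (the $n$ choices of $s$, the $1/(n+1)$ normalizations, and the $(n+1)$ from the adjoint) should yield the factor $Cn/\kappa$. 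I expect this Fourier bookkeeping of the doubled diagonal, namely identifying exactly which variables integrate against $r_{n+1}$ and verifying that the $\sin(P^i)$ factor attaches to the right total momentum so that Lemma~\ref{lem:trace.estimate} applies, to be the main obstacle. The set where $\sin(P^i/2)=0$ has measure zero and is negligible.
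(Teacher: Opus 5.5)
\textbf{Verdict: there is a genuine gap, in the cross term.} Your bound for $(n+1)\langle Bf,RBf\rangle$ follows the paper's argument and is fine: restriction to configurations with two entries at $(\mathbf 0,i)$, the trace $g$, the weighted Cauchy--Schwarz inequality with weight $\theta(P+q)$, and Lemma~\ref{lem:trace.estimate}. The problem is $\langle A_+f,RBf\rangle$, and it stems from a false premise. A multiplier depending only on the total momentum \emph{does} respect the diagonal structure.

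If $F$ is supported on $\{z_a=z_b\in U\times\{h\}\}$, then $\widehat F_j$ depends on $(p_a,p_b)$ only through $p_a+p_b$. Multiplying by $r_{n+1}(\sum_\ell p_\ell)$ preserves this, and Fourier inversion shows that $RF$ is again supported on $\{z_a=z_b\}$. Intuitively, $R$ shifts all coordinates jointly, which neither creates nor destroys coincidences. Apply this to the pieces $\mathbf 1_{\{z_a=z_b=(\mathbf 0,h)\}}B_hf$: then $RBf$ is supported on $E_{n+1}^c$, while $A_+f$ is supported on $E_{n+1}$. Hence $\langle A_+f,RBf\rangle=0$ exactly, and
\[
\langle f,A_-RA_+f\rangle=V_{\rm Diag}(R,f)+V_{\rm Off}(R,f)-(n+1)\langle Bf,RBf\rangle .
\]
Your $B$-estimate, together with $\langle Bf,RBf\rangle\le 3\sum_i\langle B_if,RB_if\rangle$, then finishes the proof. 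This is the mechanism behind the paper's identities $RB_hf=\mathbf 1_{D^{(h)}}RB_hf$ and $\mathbf 1_{D^{(h)}}\widetilde A_{+,i}f=\mathbf 1_{\{i=h\}}B_if$. These give $\langle f,(\widetilde A_+)^*RBf\rangle=(n+1)\sum_i\langle B_if,RB_if\rangle$.

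Without this observation, your fallbacks do not close the argument.
\begin{itemize}
\item The ``crude bound'' $(n+1)\langle A_+f,RA_+f\rangle\le C\langle f,(-S)f\rangle$ is false. The main part of this form is $V_{\rm Diag}(R,f)$; the off-diagonal and hard-core corrections are $O(n\kappa^{-1}\langle f,(-S)f\rangle)$. For $r_{n+1}=C_R/(\lambda+\kappa\theta)$, the fibre integral in Lemma~\ref{lem:V.diag} is of order $\kappa^{-1}\log\bigl(1+(\lambda/\kappa+\theta_k(p))^{-1}\bigr)$. So $V_{\rm Diag}$ carries precisely the logarithmic factor that drives the superdiffusivity, and it is not bounded by $C\langle f,(-S)f\rangle$ uniformly in $\lambda$.
\item Even granting a bound of that type, Cauchy--Schwarz yields only a geometric mean of the $B$-term and this larger form, not $Cn\kappa^{-1}\langle f,(-S)f\rangle$.
\item The variant $2ab\le\delta a^2+\delta^{-1}b^2$ leaves an error $\delta\langle f,A_-RA_+f\rangle$. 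That is a multiplicative $(1\pm\delta)$ comparison, not the additive bound \eqref{eq:hc.form}.
\item ``Computing the cross term directly'' is left unspecified. What it requires is exactly the support property above, which shows the cross term vanishes.
\end{itemize}
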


\begin{proof}
Throughout the proof, $C$ denotes a finite constant depending only on $C_R$. Its value may change from line to line. For simplicity, throughout the proof, we write $\langle\cdot,\cdot\rangle$ for the scalar product on $\ell^2(U,3,m)$, where the degree $m$ is determined by the arguments and may vary from one occurrence to another.
We first prove that for every $i\in[3]$,
\begin{equation}
\label{eq:Bi.square}(n+1)\langle B_if,RB_if\rangle\le\frac{Cn}{\kappa}\langle f,(-S)f\rangle .
\end{equation}
Fix $i\in[3]$.  For $f\in\ell^2_0(U,3,n)$, we note that $\mathcal B_i^{(n)}f= \mathbf 1_{\left\{z_{1:n}:\#\{r\in[n]:z_r=(\mathbf0,i)\}=1\right\}}f$. Since $\tau_i$ and $\tau_{-i}$ preserve $E_n$, we thus have  $B_if=\mathbf 1_{\left\{z_{1:n+1}:\#\{r\in[n+1]:z_r=(\mathbf0,i)\}=2\right\}}B_if.$
Since $B_if$ is symmetric and $R$ preserves symmetry, we have
\begin{align*}
\langle B_if,RB_if\rangle = \binom{n+1}{2} \sum_{j_{1:n-1}\in[3]^{n-1}} \sum_{x_{1:n-1}\in U^{n-1}} (B_if)_{j_{1:n-1},i,i} (x_{1:n-1},\mathbf0,\mathbf0) (RB_if)_{j_{1:n-1},i,i} (x_{1:n-1},\mathbf0,\mathbf0).\end{align*}
Note that
\begin{align*}
(B_if)_{j_{1:n-1},i,i} (x_{1:n-1},\mathbf0,\mathbf0) &= \frac{1}{n+1}\Big[f_{j_{1:n-1},i}\big((x_\ell-e_i+e_{j_\ell})_{1\le \ell\le n-1},\mathbf0\big)\nonumber\\
&\quad - f_{j_{1:n-1},i} \big((x_\ell+e_i-e_{j_\ell})_{1\le \ell\le n-1},\mathbf0\big)\Big].
\end{align*}
Let $g_{j_{1:n-1}}(x_{1:n-1}):=f_{j_{1:n-1},i}(x_{1:n-1},\mathbf0)$ and $P:=\sum_{\ell=1}^{n-1}p_\ell.$ Taking the Fourier transform only in the variables $x_1,\ldots,x_{n-1}$, we get 
\begin{align} \label{eq:trace.fourier}
  \sum_{x_{1:n-1}\in U^{n-1}} (B_if)_{j_{1:n-1},i,i} (x_{1:n-1},\mathbf0,\mathbf0) \exp\left(\mathbf i\sum_{\ell=1}^{n-1}p_\ell\cdot x_\ell\right)= \frac{1}{n+1} \left( {\rm e}^{\mathbf iP^i} - {\rm e}^{-\mathbf iP^i} \right) \widehat g_{j_{1:n-1}}(p_{1:n-1}).
\end{align}
Using the Fourier representation of $R$,
\begin{align*}&(n+1)\langle B_if,RB_if\rangle
\\
&\quad\le C n \sum_{j_{1:n-1}\in [3]^{n-1}} \int_{\mathbb{T}^3_{j_{1:n-1}}} \sin^2(P^i) \left( \int_{\mathbb T_i^3} \int_{\mathbb T_i^3} r_{n+1}(P+q+w)\rmd q\rmd w\right) |\widehat g_{j_{1:n-1}}(p_{1:n-1})|^2\rmd p_{1:n-1}.
\end{align*}

By translation invariance of Haar measure on $\mathbb T_i^3$,
$$
\int_{\mathbb T_i^3}\int_{\mathbb T_i^3} r_{n+1}(P+q+w)\rmd q\rmd w=|\mathbb T_i^3|\int_{\mathbb T_i^3} r_{n+1}(P+w)\rmd w .
$$
Using \eqref{eq:R.res}, we obtain
\begin{equation}
\label{eq:Bi.form}
(n+1)\langle B_if,RB_if\rangle \le C n \sum_{j_{1:n-1}\in[3]^{n-1}} \int_{\mathbb T^3_{j_{1:n-1}}} \sin^2(P^i) \left( \int_{\mathbb T_i^3} \frac{\rmd w}{\lambda+\kappa\theta(P+w)} \right) |\widehat g_{j_{1:n-1}}(p_{1:n-1})|^2\rmd p_{1:n-1}.
\end{equation}
On the other hand, by Fourier inversion formula, we have
\begin{align*}
    \widehat g_{j_{1:n-1}}(p_{1:n-1}) =\frac{1}{(2\pi)^2} \int_{\mathbb T_i^3} \widehat f_{j_{1:n-1},i}(p_{1:n-1},q)\rmd q.
\end{align*}
Therefore, by the Cauchy–Schwarz inequality with weight $\theta(P+q)$, 
\begin{equation}
\label{eq:weighted.cauchy}
|\widehat g_{j_{1:n-1}}(p_{1:n-1})|^2 \le C\left(\int_{\mathbb T_i^3} |\widehat f_{j_{1:n-1},i}(p_{1:n-1},q)|^2 \theta(P+q)\rmd q\right)\left( \int_{\mathbb T_i^3}\frac{\rmd q}{\theta(P+q)}\right).
\end{equation}
If $\sin(P^i/2)=0$, then the integrand is zero. When $\sin(P^i/2)\neq0$,
by Lemma~\ref{lem:trace.estimate}, we have
$$
\sin^2(P^i)\left(\int_{\mathbb T_i^3}\frac{\rmd w}{\lambda+\kappa\theta(P+w)}\right)\left(\int_{\mathbb T_i^3}\frac{\rmd q}{\theta(P+q)}\right)\le\frac{C}{\kappa}.
$$
Thus,
\begin{align*}
    \sin^2(P^i)\left(\int_{\mathbb T_i^3}\frac{\rmd w}{\lambda+\kappa\theta(P+w)}\right)|\widehat g_{j_{1:n-1}}(p_{1:n-1})|^2 \le\frac{C}{\kappa}\int_{\mathbb T_i^3}|\widehat f_{j_{1:n-1},i}(p_{1:n-1},q)|^2\theta(P+q)\rmd q.
\end{align*}
Substituting this into
\eqref{eq:Bi.form}, we get
$$
(n+1)\langle B_if,RB_if\rangle\le\frac{C n}{\kappa}\sum_{j_{1:n-1}\in[3]^{n-1}}\int_{\mathbb T^3_{j_{1:n-1}}}\int_{\mathbb T_i^3}|\widehat f_{j_{1:n-1},i}(p_{1:n-1},q)|^2\theta(P+q)\rmd q\rmd p_{1:n-1}.
$$
By Lemma \ref{lem:Fourier} and the Parseval-Plancherel formula,
$$\sum_{j_{1:n-1}\in[3]^{n-1}}\int_{\mathbb T^3_{j_{1:n-1}}\times\mathbb T_i^3}|\widehat f_{j_{1:n-1},i}(p_{1:n-1},q)|^2\theta(P+q)\rmd p_{1:n-1}\rmd q\le C\langle f,(-S)f\rangle.$$
This implies \eqref{eq:Bi.square}.

Since $R$ is positive and $B=\sum_iB_i$, we have
$$
\left\langle f, B^*R Bf\right\rangle=(n+1)\left\langle\sum_{i=1}^3B_if, R\sum_{i=1}^3B_if\right\rangle\le 3(n+1)\sum_{i=1}^3\langle B_if,RB_if\rangle ,
$$
and therefore \eqref{eq:Bi.square} implies that
\begin{align}
    \label{blocked.part} \left\langle f, B^*R Bf\right\rangle \le \frac{Cn}{\kappa} \langle f,(-S)f\rangle.
\end{align}

We next show that, for every $f\in\ell^2_0(U,3,n) $,
\begin{align}\label{mixed.part}
\big|\langle f, (\widetilde A_+)^*RBf\rangle\big| \le\frac{C n}{\kappa}\langle f,(-S)f\rangle.
\end{align}
For $h\in[3]$ and $1\le a<b\le n+1$, define
$$ D_{a,b}^{(h)}=\left\{z_{1:n+1}:\, z_a=z_b,\ z_a\in U\times\{h\}\right\},\quad D^{(h)}=\bigcup_{1\le a<b\le n+1}D_{a,b}^{(h)}.$$
We first prove that
\begin{equation}
\label{eq:RBh.support}
RB_hf=\mathbf 1_{D^{(h)}}RB_hf .
\end{equation}
Note that
\begin{equation}
\label{eq:Bh.decomp}
B_hf=\sum_{1\le a<b\le n+1}\mathbf 1_{\{z_{1:n+1}:z_a=z_b=(\mathbf0,h)\}}B_hf .
\end{equation}
Let $F$ be a function such that, for all $j_{1:n+1}\in[3]^{n+1}$ and all $x_{1:n+1}$,
\begin{equation*}
F_{j_{1:n+1}}(x_{1:n+1}) =\mathbf 1_{\{j_a=j_b=h\}}\mathbf 1_{\{x_a=x_b\}} F_{j_{1:n+1}}(x_{1:n+1}).
\end{equation*}
We show that $RF$ satisfies the same identity. If $j_a\neq h$ or $j_b\neq h$, then $F_{j_{1:n+1}}=0$ and $(RF)_{j_{1:n+1}}=0$. If $j_a=j_b=h$, then  $F_{j_{1:n+1}}(x_{1:n+1})=\mathbf 1_{\{x_a=x_b\}} F_{j_{1:n+1}}(x_{1:n+1})$ and thus
\begin{align*}
\widehat F_{j_{1:n+1}}(p_{1:n+1})= &\sum_{x_{1:n+1\setminus\{a,b\}}\in U^{n-1}, y\in U} F_{j_{1:n+1}}(x_1,\ldots,x_{a-1},y,x_{a+1},\ldots,x_{b-1},y,x_{b+1},\ldots,x_{n+1})\\
&\times\exp\left(\mathbf i\sum_{\ell\neq a,b}p_\ell\cdot x_\ell+\mathbf i(p_a+p_b)\cdot y\right).
\end{align*}
Hence $\widehat F_{j_{1:n+1}}$ depends on $p_a$ and $p_b$ only through $p_a+p_b$ in this case. On the other hand, by \eqref{eq:R.momentum}, we have
$$\widehat{RF}_{j_{1:n+1}}(p_{1:n+1})=r_{n+1}\left(\sum_{\ell=1}^{n+1}p_\ell\right)\widehat F_{j_{1:n+1}}(p_{1:n+1}).$$
The right-hand side also depends on $p_a$ and $p_b$ only through $p_a+p_b$. Therefore, by Fourier inversion and orthogonality of characters on $\mathbb T_h^3$,
$$(RF)_{j_{1:n+1}}(x_{1:n+1})=\mathbf 1_{\{x_a=x_b\}}(RF)_{j_{1:n+1}}(x_{1:n+1}).$$
Combining this with the case $j_a\neq h$ or $j_b\neq h$, we deduce that
$$(RF)_{j_{1:n+1}}(x_{1:n+1})=\mathbf 1_{\{j_a=j_b=h\}}\mathbf 1_{\{x_a=x_b\}}(RF)_{j_{1:n+1}}(x_{1:n+1}).$$
Applying this to each summand in \eqref{eq:Bh.decomp}, we obtain \eqref{eq:RBh.support}.

Next we prove that, for every $i,h\in[3]$,
\begin{equation}
\label{eq:Atilde.B}
\mathbf 1_{D^{(h)}}\widetilde A_{+,i}f=\mathbf 1_{\{i=h\}}B_if .
\end{equation}
For every $z_{1:n+1}$, recall that
\begin{align}
\label{eq:Atilde.sum}
(\widetilde A_{+,i}f)(z_{1:n+1})&=\frac{1}{2(n+1)}\sum_{r=1}^{n+1}\mathbf 1_{\{z_r=(\mathbf0,i)\}}\left[(\tau_i-\tau_{-i})f\right](z_{1:n+1\setminus\{r\}}).\\
\label{eq:Bi.physical.sum} 
(B_if)(z_{1:n+1})&=\frac{1}{2(n+1)}\sum_{r=1}^{n+1}\mathbf 1_{\{z_r=(\mathbf0,i)\}}\mathbf 1_{\left\{\#\{s\in[n+1]\setminus\{r\}:z_s=(\mathbf0,i)\}\ge1\right\}}\left[(\tau_i-\tau_{-i})f\right](z_{1:n+1\setminus\{r\}}).
\end{align}
Let $z_{1:n+1}\in D^{(h)}$. Choose $a<b$ and $x\in U$ such that $z_a=z_b=(x,h)$. Suppose first that $i\neq h$. Then $z_{1:n+1\setminus\{r\}}$ contains the two entries equal to $(x,h)$. Since $f\in\ell^2_0(U,3,n)$, we have $(\tau_{\pm i}f)(z_{1:n+1\setminus\{r\}})=0$. Hence
$$\mathbf 1_{D^{(h)}}\widetilde A_{+,i}f=0\quad\text{for } i\neq h.$$
Now assume $i=h$. Let $z_{1:n+1}\in D^{(i)}$. In \eqref{eq:Atilde.sum}, the $r$-th summand is zero unless $z_r=(\mathbf0,i)$. Suppose $z_r=(\mathbf0,i)$. If $\#\{s\in[n+1]\setminus\{r\}:z_s=(\mathbf0,i)\}\ge1$, then the $r$-th summand in \eqref{eq:Atilde.sum} is exactly the $r$-th summand in \eqref{eq:Bi.physical.sum}. It remains to consider the case $\#\{s\in[n+1]\setminus\{r\}:z_s=(\mathbf0,i)\}=0.$
Since $z_{1:n+1}\in D^{(i)}$, there exist $a<b$ and $x\in U$ such that $z_a=z_b=(x,i)$ with $r\notin\{a,b\}$. Hence $z_{1:n+1\setminus\{r\}}$ contains the two entries equal to $(x,i)$.
Since $f\in\ell^2_0(U,3,n)$, we have $(\tau_{\pm i}f)(z_{1:n+1\setminus\{r\}})=0$. Therefore, on $D^{(i)}$, the nonzero summands in $\widetilde A_{+,i}f$ are exactly the summands in $B_if$. Hence
$$
\mathbf 1_{D^{(i)}}\widetilde A_{+,i}f=B_if.
$$
Combining this with the case $i\neq h$ proves
\eqref{eq:Atilde.B}.

Using \eqref{eq:RBh.support} and
\eqref{eq:Atilde.B}, we have
$$\left\langle\widetilde A_{+,i}f, RB_hf\right\rangle=\left\langle\mathbf 1_{D^{(h)}}\widetilde A_{+,i}f, RB_hf\right\rangle=\mathbf 1_{\{i=h\}}\langle B_if,RB_if\rangle.$$
Therefore
\begin{align}
\left\langle f, (\widetilde A_+)^*RBf\right\rangle&=(n+1)\sum_{i,h=1}^3\left\langle\widetilde A_{+,i}f, RB_hf\right\rangle=(n+1)\sum_{i=1}^3\langle B_if,RB_if\rangle.
\end{align}
Using \eqref{eq:Bi.square},
$$\left|\left\langle f, (\widetilde A_+)^*RBf\right\rangle\right|\le(n+1)\sum_{i=1}^3\langle B_if,RB_if\rangle\le\frac{C n}{\kappa}\langle f,(-S)f\rangle.$$

Recall that
$A_+=\widetilde A_+-B$ and $A_+^*=A_-$. Therefore
\begin{align}\label{A.diff}
&\left\langle f, A_-R A_+f\right\rangle-\left\langle f, (\widetilde A_+)^*R\widetilde A_+f\right\rangle =-\left\langle f, (\widetilde A_+)^*R Bf\right\rangle-\left\langle f, B^*R \widetilde A_+f\right\rangle +\left\langle f, B^*R Bf\right\rangle.
\end{align}
Since $R$ is self-adjoint, the second mixed term is the complex conjugate
of the first one. Hence, combining \eqref{A.diff} with  \eqref{blocked.part} and \eqref{mixed.part}, we obtain
$$\left|\left\langle f,A_-R A_+f\right\rangle-\left\langle f,(\widetilde A_+)^*R\widetilde A_+f\right\rangle\right|\le\frac{Cn}{\kappa}\langle f,(-S)f\rangle.$$
By the definition of $V_{\rm Diag}$ and $V_{\rm Off}$, we have
$\left\langle f,(\widetilde A_+)^*R\widetilde A_+f\right\rangle=V_{\rm Diag}(R,f)+V_{\rm Off}(R,f)$. This implies
\eqref{eq:hc.form}.
\end{proof}

\section{Bounds for the mean squared displacement}
\label{sec:bounds.msd}
The purpose of this section is to establish upper and lower bounds for the recursive operators $(\Lambda_m)_{m\ge1}$ defined in \eqref{eq:def.Lambda}, and then to use these bounds to prove Theorem~\ref{thm.main}. Motivated by the recursive comparison approach in \cite{Y2004} and \cite{CHT2022}, we first introduce diagonal comparison operators $(T_m)_{m\ge1}$ and the associated resolvents $(R_m)_{m\ge1}$ through suitable Fourier multipliers. We then prove the integral estimates required to control the diagonal and off-diagonal parts of the quadratic forms arising from $A_-R_m A_+$. Combining these estimates with Proposition~\ref{prop:hc.form}, we prove in Proposition~\ref{prop:recur.bounds} that, up to a multiplicative coefficient $c_m$, the operator $\Lambda_m$ is bounded from below by $T_m$ when $m$ is odd and from above by $T_m$ when $m$ is even.  Substituting these bounds into the truncated resolvent representation \eqref{eq:schur.rep}, we estimate the resulting first-chaos Fourier integrals and complete the proof of Theorem~\ref{thm.main}.

Throughout this section, we assume that $d=3$ and fix $\lambda\in(0,1)$. For simplicity of notation, we write $\langle\cdot,\cdot\rangle$ for the scalar product on $\ell^2(U,3,k)$, where the degree $k$ is determined by the arguments and may vary from one occurrence to another.

\subsection{Comparison multipliers}
\label{subsec:fourier.multipliers}

In this subsection, we define the diagonal Fourier multipliers used to compare $\Lambda_m$. For $n\ge1$, $j\in[3]^n$, $p=(p_1,\ldots,p_n)\in\mathbb T_j^3$, and $k\in[3]$, define
\begin{align}
\label{eq:theta.sum.Theta}
\theta_k(p)&:=2\sin^2\Big(\frac{1}2\sum_{\ell=1}^n p_\ell^k\Big)\quad\text{and} \quad \theta(p):=\sum_{k=1}^3\theta_k(p)=2\sum_{k=1}^3\sin^2\Big(\frac{1}2 \sum_{\ell=1}^n p_\ell^k\Big).
\end{align}
We use the following convention for degree $n=0$. The set $[3]^0$ consists of the empty index $\emptyset$, $\mathbb T_{\emptyset}^3$ consists of one point, and for the unique $p\in\mathbb T_{\emptyset}^3$, set $\theta(p)=0$ and $\theta_k(p)=0$ for each $k\in[3]$.

For $x>0$ and $z>1$, define
\begin{equation}
\label{eq:def.L}
L(x,z):=z+\log(1+x^{-1}),
\end{equation}
and, for $m\ge0$,
\begin{align}
\label{eq:def.LM}
{\rm LB}_m(x,z)&:=\sum_{r=0}^{m}\frac{1}{r!}\left(\frac12\log L(x,z)\right)^r, \quad {\rm UB}_m(x,z):=\frac{L(x,z)}{{\rm LB}_m(x,z)}.
\end{align}
The functions ${\rm LB}_m$ and ${\rm UB}_m$ are studied in Appendix A of \cite{CHT2022}.

We now define the lower and upper comparison multipliers:
\begin{align}\label{def:lower.multiplier}
\mathfrak l_m(\lambda,p,z)&:=\frac12\sum_{k=1}^3{\rm LB}_m(\lambda+\theta_k(p),z)\sin^2\Big(\sum_{\ell=1}^n p_\ell^k\Big),
\\ \label{def.up.multiplier}
\mathfrak u_m(\lambda,p,z) &:=\frac12\sum_{k=1}^3{\rm UB}_m(\lambda+\theta_k(p),z)\sin^2\Big(\sum_{\ell=1}^n p_\ell^k\Big).
\end{align}
By positivity of ${\rm LB}_m$, ${\rm UB}_m$, we note that $\mathfrak l_m(\lambda,p,z)\ge0$ and $\mathfrak u_m(\lambda,p,z)\ge0$.

Fix $\eps\in (0,1)$. Also, let
$K_1, K_2, K_{\rm err}\in (1,\infty)$ be sufficiently large constants such that $K_1\ge4K_{\rm err}$. Define
\begin{align}
\label{eq:def.zmn}
z_m(n)&:=K_2(n+m)^{2(2+\eps)}\quad\text{for $m\ge1$ and $n\ge0$},\\
\label{eq:def.eps.m} \varepsilon_m&:=(m+1)^{-1-\eps}\quad \text{for $m\ge1$}.
\end{align} 
Let $(c_m)_{m\ge 1}$ be a sequence of positive numbers such that
\begin{align}\label{eq:def.cm}
c_1:=\frac{1}{2K_{\rm err}}\quad \text{and for $m\ge1$,}\quad c_{2m}:=\frac{2 }{\pi c_{2m-1}}(1+\varepsilon_m),\quad c_{2m+1}:=\frac{2 }{\pi c_{2m}}(1-\varepsilon_m).
\end{align}

For each $m,n\ge1$, let $T_m: \ell^2_0(U,3,n) \to \ell^2_0(U,3,n)$ be the diagonal operator with Fourier multiplier $\mathfrak s_n^{(m)}(\lambda,p)$, where
\begin{equation}
\label{eq:def.s.one}
\mathfrak s_n^{(1)}(\lambda,p):=0,
\end{equation}
and for $m\ge1$,
\begin{align}
\label{eq:def.s.even}
\mathfrak s_n^{(2m)}(\lambda,p)&:= K_1\sqrt{z_{2m}(n)}\mathfrak u_{m-1}(\lambda,p,z_{2m}(n))+K_{\rm err}\sqrt{z_{2m}(n)}\,\theta(p),
\\ \label{eq:def.s.odd}
\mathfrak s_n^{(2m+1)}(\lambda,p)&:=\frac{1}{K_1\sqrt{z_{2m+1}(n)}}\mathfrak l_m(\lambda,p,z_{2m+1}(n))-K_{\rm err}\theta(p).
\end{align}
Thus, for every $f\in \ell_0^2(U,3,n)$, 
\begin{equation}
\label{eq:T.def}
\widehat{T_mf}_j(p) = \mathfrak s_n^{(m)}(\lambda,p)\widehat f_j(p)\quad\text{for } j\in [3]^n,\; p\in\mathbb T_j^3.
\end{equation}

The even comparison operators are non-negative since  
\begin{equation}
\label{eq:even.nonnegative}
\mathfrak s_n^{(2m)}(\lambda,p)\ge0\quad\text{for } m\ge1.
\end{equation}
The odd comparison multipliers are not necessarily non-negative.

By \eqref{eq:def.cm}, we notice that, for each $m\ge 0$,
\begin{equation}
\label{eq:odd.condition}
0<c_{2m+1}\le c_1=\frac{1}{2K_{\rm err}}.
\end{equation}
By \eqref{eq:def.s.odd} and using \eqref{eq:odd.condition} together with the fact that $\mathfrak l_r(\lambda,p,z)\ge 0$, we have
\begin{align}\nonumber
\lambda+\theta(p)+c_{2m+1}\mathfrak s_n^{(2m+1)}(\lambda,p) &= \lambda+\left(1-c_{2m+1}K_{\rm err}\right)\theta(p)+\frac{c_{2m+1}}{K_1\sqrt{z_{2m+1}(n)}}\mathfrak l_m(\lambda,p,z_{2m+1}(n))\\
\label{eq:odd.denominator.lower} &\ge \lambda+\frac12\theta(p).
\end{align}

Therefore, for every $m,n\ge1$, the operators $\lambda-S+c_{2m+1}T_{2m+1}$ are strictly positive on $\ell^2_0(U,3,n)$. Also, by \eqref{eq:even.nonnegative}, the operators $\lambda-S+c_{2m}T_{2m}$ are strictly positive on $\ell^2_0(U,3,n)$. Define the comparison resolvent
\begin{equation}
\label{eq:def.Rm}
R_m:= \left( \lambda-S+c_mT_m \right)^{-1},
\end{equation}
which is also a positive self-adjoint operator. Moreover, $R_m$ is diagonal with Fourier multiplier
\begin{equation}
\label{eq:Rm.multiplier}
r_m^{(n)}(\lambda,p) := \frac{1} {\lambda+\theta(p)+c_m\mathfrak s_{n}^{(m)}(\lambda,p)}.
\end{equation}

By \eqref{eq:even.nonnegative} and
\eqref{eq:Rm.multiplier}, we have
\begin{equation*}
0\le r_{2m}^{(n)}(\lambda,p)  \le \frac{1}{\lambda+\theta(p)} \quad \text{for } m\ge 1.
\end{equation*}
By \eqref{eq:odd.denominator.lower} and
\eqref{eq:Rm.multiplier}, we also have
\begin{equation*} 0\le r_{2m+1}^{(n)}(\lambda,p) \le \frac{1}{\lambda+\frac12\theta(p)} \le \frac{2}{\lambda+\theta(p)}\quad\text{for } m\ge 0.
\end{equation*}
Therefore, for every $m,n\ge1$, $j\in [3]^n$ and $p\in\mathbb T^3_j$,
\begin{equation}
\label{eq:R.bound}
0\le r_m^{(n)}(\lambda,p) \le \frac{2}{\lambda+\theta(p)}.
\end{equation}

\subsection{Integral estimates for the multipliers}
\label{subsec:integral.estimates}

In this subsection, we prove integral estimates for the comparison multipliers $\mathfrak l_m$ and $\mathfrak u_m$ defined by \eqref{def:lower.multiplier}-\eqref{def.up.multiplier}. These estimates will be used to establish bounds for the recursive comparison resolvents $(R_m)_{m\ge 1}$ in Section~\ref{sec:compr.res}. Throughout this subsection, we assume that $d=3$ and fix $\lambda\in(0,1)$.

The following lemma provides useful bounds for the functions ${\rm UB}_m$ and ${\rm LB}_m$ defined by \eqref{eq:def.LM}. 

\begin{lemma}
\label{lem:ULB} The following properties hold for the functions $L$, ${\rm LB}_m$ and ${\rm UB}_m$:
\begin{itemize}
    \item[i.] the functions $L$, ${\rm LB}_m$, and ${\rm UB}_m$ are non-increasing in the first variable, and
\begin{equation}
\label{eq:ULB.bounds}
1\le {\rm LB}_m(x,z)\le \sqrt{L(x,z)}\le {\rm UB}_m(x,z)\le L(x,z)\quad\text{for } x>0,\ z>1;
\end{equation}
\item[ii.] for every $0<\alpha<1$,
\begin{align}
\label{eq:LB.b}
\int_\alpha^1 \frac{\rmd r}{r{\rm LB}_m(r,z)} &\le 2{\rm UB}_m(\alpha,z) + \frac{{\rm UB}_m(\alpha,z)}{z}\quad\text{and}\\
\label{eq:UB.b}
\int_\alpha^\beta \frac{\rmd r}{r{\rm UB}_m(r,z)} &\ge 2\left( {\rm LB}_{m+1}(\alpha,z) - {\rm LB}_{m+1}(\beta,z) \right)\quad \text{for every $0<\alpha<\beta\le1$};
\end{align}
\item[iii.] for every fixed $c_0>0$, there exists $C<\infty$ such that, for all $\gamma\in[c_0z^{-1},1]$, $r\in(0,1)$, $m\ge0$, and $z>1$,
\begin{align} \label{eq:comp.LB}
{\rm LB}_m(\gamma r,z) &\le {\rm LB}_m(r,z) \left[ 1+ C\frac{(m+1)(1+\log z)}{z} \right],
\\
\label{eq:comp.UB}
{\rm UB}_m(\gamma r,z) &\le {\rm UB}_m(r,z) \left[ 1+ C\frac{(m+1)(1+\log z)}{z} \right];
\end{align}
\item[iv.] for each fixed $B>1$ there exists $C_{B}<\infty$ such that,
whenever $s,t\in(0,1)$, $m\ge0$, $z>1$, and $B^{-1}s\le t\le Bs$,
\begin{equation}
\label{eq:diag.local.LB}
C_{B}^{-1}{\rm LB}_m(s,z) \le {\rm LB}_m(t,z) \le C_{B}{\rm LB}_m(s,z);
\end{equation}
\item[v.] for every fixed $B\in(0,\infty)$, there exists $C_B<\infty$ such that, for all $r\in(0,1)$, $m\ge0$, and $z>1$,
\begin{equation}
\label{eq:diag.LB}
{\rm LB}_{m+1}(Br,z) \ge {\rm LB}_{m+1}(r,z) \left[ 1- C_B\frac{m+1}{z} \right] - C_B{\rm LB}_{m+1}(1,z).
\end{equation}
\end{itemize}
\end{lemma}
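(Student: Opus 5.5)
The plan is to reduce everything to one-variable calculus in $y:=\tfrac12\log L(x,z)$. Put $E_m(y):=\sum_{r=0}^m y^r/r!$, so that ${\rm LB}_m=E_m(y)$ and ${\rm UB}_m=e^{2y}/E_m(y)$. Since $L(x,z)=z+\log(1+x^{-1})$ is decreasing in $x$ and $L\ge z>1$, we have $y>0$ and $y$ is non-increasing in $x$.

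For (i), monotonicity of $L$ and ${\rm LB}_m$ is immediate. Monotonicity of ${\rm UB}_m$ follows from $\frac{d}{dy}\log(e^{2y}/E_m)=2-E_{m-1}/E_m\ge1>0$. The chain of inequalities uses $1\le E_m(y)\le e^{y}$: the left end from $y\ge0$, the right end from truncation of the exponential series. This gives ${\rm LB}_m\le\sqrt L$, hence ${\rm UB}_m=L/{\rm LB}_m\ge\sqrt L$, and ${\rm UB}_m\le L$ because ${\rm LB}_m\ge1$.

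For (ii), change variables $r\mapsto y(r)$. We have $\frac{dL}{dr}=-\frac{1}{r(1+r)}$, so $\frac{dr}{r}=-(1+r)\,dL=-2(1+r)L\,dy$, with $1\le1+r\le2$ on $(0,1]$.

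For \eqref{eq:UB.b}, the integrand becomes
\[
\frac{dr}{r\,{\rm UB}_m}=(1+r)\frac{2L\,E_m}{L}\,dy\ge 2E_m(y)\,dy,
\]
and $\int E_m\,dy=E_{m+1}$. Integrating between $y(\beta)$ and $y(\alpha)$ gives exactly $2({\rm LB}_{m+1}(\alpha)-{\rm LB}_{m+1}(\beta))$.

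For \eqref{eq:LB.b}, the integrand becomes $\le 4e^{2y}/E_m(y)\,dy$, and the constant $2$ in the statement then needs the sharper form of the Jacobian. We write $(1+r)dL$ exactly and note that the factor $1+r$ contributes only near $r\sim1$, where $L=z+O(1)$. The target is
\[
\int_{y_0}^{Y}\frac{2e^{2y}}{E_m(y)}\,dy\le\frac{e^{2Y}}{E_m(Y)}\Bigl(1+O\bigl(\tfrac1z\bigr)\Bigr).
\]
This holds because $\frac{d}{dy}(e^{2y}/E_m)=e^{2y}(2E_m-E_{m-1})/E_m^2\ge e^{2y}/E_m$; more sharply $=\frac{2e^{2y}}{E_m}(1-\frac{E_{m-1}}{2E_m})$. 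So the integrand is the derivative up to a factor at most $2$. The leftover, together with the contribution of the region $r\in[1/2,1]$, is absorbed into the $2{\rm UB}_m+{\rm UB}_m/z$ slack, using $L\ge z$. I expect the exact constant bookkeeping here, following \cite{CHT2022} Appendix A, to be the fiddliest point. I would first try the crude bound $1+r\le2$ combined with the derivative identity, and check whether that already yields the factor $2$.

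For (iii), write $L(\gamma r)\le L(r)+\log(1/\gamma)+\log 2\le L(r)+C(1+\log z)$, using $\gamma\ge c_0/z$. Thus $y$ increases by $\delta\le C(1+\log z)/L\le C(1+\log z)/z$. Then $\frac{d}{dy}\log E_m=E_{m-1}/E_m\le1$, and for \eqref{eq:comp.LB} we need the increment $\le(m+1)\delta$; this follows since $\log E_m$ is Lipschitz with constant $\le1$, and $e^\delta\le1+C\delta$. For \eqref{eq:comp.UB}, we have $\frac{d}{dy}\log{\rm UB}_m\le2$, so ${\rm UB}_m$ grows by a factor $\le e^{2\delta}$. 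The factor $(m+1)$ in the statement gives extra room here.

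For (iv), $B^{-1}s\le t\le Bs$ implies $|L(t)-L(s)|\le\log B+\log2$. Hence $|\Delta y|\le C_B/L\le C_B$, and $\log E_m$ is $1$-Lipschitz in $y$, which gives the claim.

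For (v), split into two cases. If $Br\ge1$ or $r$ is near $1$, we use $E_{m+1}(y(Br))\ge{\rm LB}_{m+1}(1,z)$ up to constants, absorbed by the $-C_B{\rm LB}_{m+1}(1,z)$ term. Otherwise $|L(Br)-L(r)|\le C_B$, so $|\Delta y|\le C_B/z$, and
\[
E_{m+1}(y-\Delta)\ge E_{m+1}(y)\,e^{-\Delta}\ge E_{m+1}(y)\Bigl(1-\frac{C_B}{z}\Bigr),
\]
which is stronger than required. The main obstacle remains the sharp constant in \eqref{eq:LB.b}.
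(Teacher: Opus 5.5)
Your treatment of (i), \eqref{eq:UB.b}, (iii), (iv) and (v) is correct. It is essentially the paper's argument made self-contained. The paper quotes \cite[Lemma A.1]{CHT2022} for (i) and for the primitives $\int_\alpha^\beta\frac{\rmd r}{(r^2+r){\rm UB}_m}=2({\rm LB}_{m+1}(\alpha,z)-{\rm LB}_{m+1}(\beta,z))$ and $\int_\alpha^\beta\frac{\rmd r}{(r^2+r){\rm LB}_m}\le 2({\rm UB}_m(\alpha,z)-{\rm UB}_m(\beta,z))$; these are exactly your computations in the variable $y=\frac12\log L$. The paper then proves (iii)--(v) using the same $1$-Lipschitz property of $\log E_m$ that you use. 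One small correction in (v), case $Br\ge1$: the fact you actually need is ${\rm LB}_{m+1}(r,z)\le C_B{\rm LB}_{m+1}(1,z)$ for $r\ge B^{-1}$, which makes the right-hand side non-positive.

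The genuine gap is \eqref{eq:LB.b}.

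\begin{itemize}
\item Your displayed target $\int_{y_0}^{Y}2e^{2y}E_m(y)^{-1}\rmd y\le e^{2Y}E_m(Y)^{-1}(1+O(1/z))$ is off by a factor $2$ and is false. For fixed $y_0<Y$ and $m\to\infty$ one has $E_m\to e^{y}$ uniformly on $[y_0,Y]$, so the left side tends to $2(e^{Y}-e^{y_0})$, about twice the right side when $Y\gg y_0$.
\item Your derivative inequality $\frac{\rmd}{\rmd y}(e^{2y}/E_m)\ge e^{2y}/E_m$ only gives $\int_{y_0}^{Y}2e^{2y}E_m^{-1}\rmd y\le 2[e^{2y}/E_m]_{y_0}^{Y}$.
\item Combined with the crude bound $1+r\le2$, it yields $4{\rm UB}_m(\alpha,z)$, not $2{\rm UB}_m(\alpha,z)+{\rm UB}_m(\alpha,z)/z$. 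The sentence about the leftover being ``absorbed into the slack'' is not an argument.
\end{itemize}

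The missing step is to split the Jacobian exactly as $\frac1r=\frac{1}{r(1+r)}+\frac{1}{1+r}$.

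\begin{itemize}
\item Since $\frac{\rmd r}{r(1+r)}=-\rmd L=-2e^{2y}\rmd y$, the first piece equals $\int_{y(1)}^{y(\alpha)}2e^{2y}E_m(y)^{-1}\rmd y$. By your derivative inequality this is at most $2({\rm UB}_m(\alpha,z)-{\rm UB}_m(1,z))$.
\item The second piece is $\int_\alpha^1\frac{\rmd r}{(1+r){\rm LB}_m(r,z)}\le\log2$, because ${\rm LB}_m\ge1$.
\item Since ${\rm UB}_m(1,z)\ge\sqrt z>1$, the dropped boundary term absorbs $\log 2$. Hence $\int_\alpha^1\frac{\rmd r}{r{\rm LB}_m(r,z)}\le2{\rm UB}_m(\alpha,z)$, which implies \eqref{eq:LB.b}.
\end{itemize}

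The paper instead bounds the second piece by ${\rm UB}_m(\alpha,z)/z$ via \cite[Lemma A.5]{CHT2022}.

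The constant $2$ is not cosmetic. It must match the $2$ in \eqref{eq:UB.b}, so that Lemma~\ref{lem:diagonal.integrals} has the same leading constant $4\pi$ in both directions. With $4$ in place of $2$, the recursion \eqref{eq:def.cm} would become $c_{2m+1}\approx c_{2m-1}/2$. That would destroy the uniform bounds \eqref{eq:coeff.uniform.bounds} and cost a power of $\log(1/\lambda)$ in the final estimate.
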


\begin{proof}
By \cite[Lemma A.1]{CHT2022}, we have \eqref{eq:ULB.bounds}, and that the functions $L$, ${\rm LB}_m$, and ${\rm UB}_m$ are non-increasing in the first variable, and that for every $0<\alpha<\beta$,
\begin{align}
\label{eq:ULB.primitive.UB}
\int_\alpha^\beta\frac{\rmd r} {(r^2+r){\rm UB}_m(r,z)}&=2\left({\rm LB}_{m+1}(\alpha,z)-{\rm LB}_{m+1}(\beta,z)\right),\\
\label{eq:ULB.primitive.LB}
\int_\alpha^\beta\frac{\rmd r} {(r^2+r){\rm LB}_m(r,z)}&\le 2\left( {\rm UB}_m(\alpha,z)- {\rm UB}_m(\beta,z)\right).
\end{align}
By \cite[Lemma A.5]{CHT2022}, we also have that, for every $0<\alpha<1$,
\begin{equation}
\label{eq:ULB.rho.replacement}
0\le\int_\alpha^1\frac{\rmd r}{r{\rm LB}_m(r,z)}-\int_\alpha^1\frac{\rmd r}{(r^2+r){\rm LB}_m(r,z)}\le\frac{{\rm UB}_m(\alpha,z)}{z}.
\end{equation}
Combining
\eqref{eq:ULB.primitive.LB} with
\eqref{eq:ULB.rho.replacement}, we have
$$
\int_\alpha^1 \frac{\rmd r}{r{\rm LB}_m(r,z)}
\le 2{\rm UB}_m(\alpha,z)-2{\rm UB}_m(1,z) +\frac{{\rm UB}_m(\alpha,z)}{z}.
$$
Using the fact that ${\rm UB}_m(1,z)\ge0$, we obtain \eqref{eq:LB.b}.

Using \eqref{eq:ULB.primitive.UB} and the fact that $r^2+r\ge r$ for $r>0$, we have
$$
\int_\alpha^\beta \frac{\rmd r}{r{\rm UB}_m(r,z)}
\ge \int_\alpha^\beta \frac{\rmd r}{(r^2+r){\rm UB}_m(r,z)}
= 2\left( {\rm LB}_{m+1}(\alpha,z) - {\rm LB}_{m+1}(\beta,z) \right).
$$
This proves \eqref{eq:UB.b}.

We next prove \eqref{eq:comp.LB}-\eqref{eq:comp.UB}. For $\gamma\in[c_0z^{-1},1]$, we note that
$$
0\le L(\gamma r,z)-L(r,z)
= \log\left( \frac{1+\gamma^{-1}r^{-1}}{1+r^{-1}} \right)
\le \log(\gamma^{-1}) \le C(1+\log z).
$$
Since $L(r,z)\ge z$, it follows that
$$
0 \le \frac12\log L(\gamma r,z) - \frac12\log L(r,z)
= \frac12 \log\left( \frac{L(\gamma r,z)}{L(r,z)} \right)
\le C\frac{1+\log z}{z}.
$$
For $u\ge0$,
$\frac{d}{du}\sum_{\ell=0}^{m}\frac{u^\ell}{\ell!} = \sum_{\ell=0}^{m-1}\frac{u^\ell}{\ell!} \le \sum_{\ell=0}^{m}\frac{u^\ell}{\ell!}.
$
Therefore
$$ {\rm LB}_m(\gamma r,z) \le {\rm LB}_m(r,z) \exp\left( C\frac{1+\log z}{z}\right).
$$
This implies \eqref{eq:comp.LB}. Moreover,
$$
\frac{L(\gamma r,z)}{L(r,z)} \le 1+ C\frac{1+\log z}{z}, \quad {\rm LB}_m(\gamma r,z)\ge {\rm LB}_m(r,z),
$$
since $\gamma r\le r$ and $x\mapsto{\rm LB}_m(x,z)$ is non-increasing. Hence
$$
{\rm UB}_m(\gamma r,z) = \frac{L(\gamma r,z)}{{\rm LB}_m(\gamma r,z)} \le {\rm UB}_m(r,z) \left[ 1+ C\frac{1+\log z}{z} \right].
$$
This proves \eqref{eq:comp.UB}.

We next prove \eqref{eq:diag.local.LB}.
For $s,t\in(0,1)$ and $B^{-1}s\le t\le Bs$, we have
$$
\left| L(t,z)-L(s,z) \right| = \left| \log(1+t^{-1})-\log(1+s^{-1}) \right| \le C_{B}.
$$
Since $L(s,z),L(t,z)\ge z$, we obtain
$$
\left| \frac12\log L(t,z) - \frac12\log L(s,z) \right|
\le \frac{|L(t,z)-L(s,z)|}{2\min\{L(t,z),L(s,z)\}} \le C_{B}z^{-1}.$$
Using again
$
\frac{d}{du} \sum_{\ell=0}^{m}\frac{u^\ell}{\ell!} \le \sum_{\ell=0}^{m}\frac{u^\ell}{\ell!},
$
we get
$$
{\rm LB}_m(t,z)\le \exp(C_Bz^{-1}){\rm LB}_m(s,z)\le {\rm e}^{C_B}{\rm LB}_m(s,z).
$$
The reverse inequality follows by interchanging $s$ and $t$. This proves
\eqref{eq:diag.local.LB}.

We now prove \eqref{eq:diag.LB}. If $B\le1$, then $
{\rm LB}_{m+1}(Br,z)\ge{\rm LB}_{m+1}(r,z).
$
Thus \eqref{eq:diag.LB} holds in this case. Assume $B>1$. If $Br\le1$, then
$
0\le L(r,z)-L(Br,z) = \log\left( \frac{1+r^{-1}}{1+B^{-1}r^{-1}} \right) \le \log B.
$
Hence
$$
0 \le \frac12\log L(r,z) - \frac12\log L(Br,z) \le C_Bz^{-1}.
$$
Using $\frac{\rmd}{\rmd u}\sum_{\ell=0}^{m+1}\frac{u^\ell}{\ell!} \le \sum_{\ell=0}^{m+1}\frac{u^\ell}{\ell!}$ for $u\ge 0$, we get
$$
{\rm LB}_{m+1}(Br,z) \ge {\rm LB}_{m+1}(r,z) \exp(-C_Bz^{-1}) \ge {\rm LB}_{m+1}(r,z) \left[ 1- C_B\frac{m+1}{z} \right].
$$
If $Br>1$, then $r>B^{-1}$, and hence
$
L(r,z) \le z+\log(1+B) \le C_B L(1,z).
$
Therefore
$
{\rm LB}_{m+1}(r,z) \le C_B{\rm LB}_{m+1}(1,z).
$
For sufficiently large $C_B$, the right-hand side of \eqref{eq:diag.LB} is non-positive. Hence \eqref{eq:diag.LB} also holds when $Br>1$.

\end{proof}

The next lemma controls the integral appearing in $V_{\mathrm{Diag}}(R_m,f)$.
\begin{lemma}
\label{lem:diagonal.integrals}
There exists a constant $C<\infty$ such that, for every $m\ge0$, $z>1$, $\lambda\in(0,1)$, $n\ge0$, $j\in[3]^n$, $p\in\mathbb T_j^3$, and $k\in[3]$,
\begin{align}
\label{eq:diagonal.upper.integral}
\int_{\mathbb T_k^3} \frac{\rmd q} {\lambda+\theta(p,q)+\mathfrak l_m(\lambda,(p,q),z)} &\le 4\pi \left[ 1+ C\left( z^{-1/2} + \frac{(m+1)(1+\log z)}{z} \right) \right] \nonumber\\ 
&\quad\times {\rm UB}_m(\lambda+\theta_k(p),z)+C, \\
\label{eq:diagonal.lower.integral} \int_{\mathbb T_k^3} \frac{\rmd q} {\lambda+\theta(p,q)+\mathfrak u_m(\lambda,(p,q),z)} &\ge 4\pi \left[
1- C\left( z^{-1/2} + \frac{(m+1)(1+\log z)}{z} \right) \right] \nonumber\\
&\quad\times{\rm LB}_{m+1}(\lambda+\theta_k(p),z)- C{\rm LB}_{m+1}(1,z)-C.
\end{align}
Here, when $n=0$, we use the degree-zero convention from
Section~\ref{subsec:fourier.multipliers}.
\end{lemma}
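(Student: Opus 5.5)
Writing $q\in\mathbb T_k^3$ for the new momentum (so $q^k=0$), set $P:=\sum_{\ell}p_\ell$ and $s:=P+q$. Then $\theta(p,q)=\theta(s)=\theta_k(p)+2\sin^2(s^j/2)+2\sin^2(s^{k'}/2)$, where $\{j,k'\}=[3]\setminus\{k\}$. Likewise
\[
\mathfrak l_m(\lambda,(p,q),z)=\tfrac12\sum_{h}{\rm LB}_m(\lambda+\theta_h(s),z)\sin^2(s^h),
\]
and the same holds for $\mathfrak u_m$. After the translation $(x,y)=(s^j,s^{k'})$, which preserves Haar measure as in the proof of Lemma~\ref{lem:trace.estimate}, the integral becomes a two-dimensional integral over $[-\pi,\pi)^2$. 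The $k$-th term is independent of $(x,y)$. For the upper bound \eqref{eq:diagonal.upper.integral} I drop it, using $\mathfrak l_m\ge0$. For the lower bound \eqref{eq:diagonal.lower.integral} it is a nonnegative constant $\tfrac12{\rm UB}_m(\lambda+\theta_k(p),z)\sin^2(P^k)$. Write $a:=\lambda+\theta_k(p)$. Using $\sin^2(P^k)\le 4\sin^2(P^k/2)\le 2\theta_k\le 2a$ and ${\rm UB}_m\le L$, this term is at most $a L(a,z)$. Compared with $a+x^2+y^2$ it is therefore a small perturbation when $a$ is small. When $a$ is of order one, the whole integral is $O(1)$ and is absorbed into the constants $C$ and $C{\rm LB}_{m+1}(1,z)$.

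Next I localize to $x^2+y^2\le\delta$ for a small fixed $\delta$; the complement contributes $O(1)$. In the core, $2\sin^2(x/2)=\tfrac{x^2}{2}(1+O(x^2))$ and $\sin^2 x=x^2(1+O(x^2))$. Set $\theta_j=\tfrac{x^2}{2}$, so $\sin^2 x\approx 2\theta_j$. The denominator is then approximately
\[
a+\tfrac{x^2+y^2}{2}+\tfrac{x^2}{2}\,{\rm LB}_m\bigl(\lambda+\tfrac{x^2}{2},z\bigr)+\tfrac{y^2}{2}\,{\rm LB}_m\bigl(\lambda+\tfrac{y^2}{2},z\bigr).
\]
The main point is to replace the two anisotropic multipliers by a single radial one, $\tfrac{\rho^2}{2}{\rm LB}_m(a+\rho^2/2,z)$ with $\rho^2=x^2+y^2$, at cost $1+C(m+1)(1+\log z)/z$. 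For the upper bound I need the denominator to be at least the radial expression up to that factor. Monotonicity gives ${\rm LB}_m(\lambda+x^2/2)\ge{\rm LB}_m(a+\rho^2/2)$ because $\lambda+x^2/2\le a+\rho^2/2$, so this direction is free. For the lower bound I need the reverse, which is where \eqref{eq:comp.LB}/\eqref{eq:comp.UB} (with $\gamma$ a ratio) and \eqref{eq:diag.local.LB} come in. The region where $x^2\ll\lambda+\theta_k+y^2$ makes ${\rm LB}_m(\lambda+x^2/2)$ larger, but there the term $x^2{\rm LB}_m$ is dominated by the $y^2$ term. I split into the sector $x^2\ge c_0 z^{-1}\rho^2$ and its complement. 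In the sector, $\gamma=x^2/\rho^2\ge c_0/z$ and \eqref{eq:comp.UB} applies. In the complement, $x^2{\rm LB}_m\le x^2 L\le c_0 z^{-1}\rho^2(z+\log(1/x^2))$. This is the step I expect to be the main obstacle: matching the $1/z$ errors and the $z^{-1/2}$ correction, which I expect to come from the perturbative $k$-term and from $\sqrt L$ factors via $\sin^2 P^k\le 2a$ with ${\rm UB}_m\le L$ and $\sqrt{L}\ge\sqrt z$.

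Finally, the radial integral is evaluated in polar coordinates:
\[
2\pi\int_0^{\sqrt\delta}\frac{\rho\,\rmd\rho}{a+\tfrac{\rho^2}{2}\bigl(1+{\rm LB}_m(a+\rho^2/2,z)\bigr)}.
\]
Substituting $r=a+\rho^2/2$ gives $\rmd r=\rho\,\rmd\rho$. The denominator lies between $r\,{\rm LB}_m(r)$ and $r(1+{\rm LB}_m(r))$, and ${\rm LB}_m\ge1$ makes the $+1$ a harmless $\le 2$ factor. This factor needs care: one has $1+{\rm LB}_m\le{\rm LB}_m(1+{\rm LB}_m^{-1})$, and ${\rm LB}_m^{-1}$ is not small. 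I would therefore instead keep the isotropic term as part of $\lambda+\theta$ and use the form $(r^2+r)$ in \eqref{eq:ULB.primitive.LB}/\eqref{eq:ULB.primitive.UB}, which gives exactly the constant $2$. The result is $2\pi\cdot 2\,{\rm UB}_m(a,z)$ for the upper bound via \eqref{eq:LB.b}. For the lower bound with $\mathfrak u_m$, \eqref{eq:UB.b} gives $2\pi\cdot2({\rm LB}_{m+1}(a)-{\rm LB}_{m+1}(a+\delta/2))$, and the subtracted term is controlled by \eqref{eq:diag.LB}, producing $-C{\rm LB}_{m+1}(1,z)$. This yields the constant $4\pi$, and the case $n=0$ is identical with $a=\lambda$.
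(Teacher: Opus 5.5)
Your skeleton is the same as the paper's. You pass to the two transverse coordinates and drop the $k$-th term of $\mathfrak l_m$ for the upper bound. For the lower bound you bound that term by $a\,{\rm UB}_m(a,z)$. You use monotonicity of ${\rm LB}_m$ to pass to a radial expression in the upper bound, apply \eqref{eq:comp.UB} on a sector in the lower bound, and finish in polar coordinates with Lemma~\ref{lem:ULB}.ii.

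\textbf{The lower bound does not close as planned.} You aim for a pointwise comparison of the denominator with the radial expression over \emph{all} angles. On the thin sector $x^2<c_0z^{-1}\rho^2$ you only use ${\rm UB}_m\le L$. That bounds the $x$-term by roughly $c_0\rho^2\bigl(1+z^{-1}\log(z/\rho^2)\bigr)$. For large $m$, the main term $\rho^2{\rm UB}_m(a+\rho^2/2,z)$ is only of order $\rho^2\sqrt{z+\log(1/\rho^2)}$. The relative error is therefore of order $c_0\sqrt{z+\log(1/\rho^2)}/z$. This is not $O\bigl(z^{-1/2}+(m+1)(1+\log z)/z\bigr)$ uniformly in $\lambda$, and it is not small once $\log(1/\rho^2)\gtrsim z^2$. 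That is exactly the regime in which the lemma is used.

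The missing observation is that no bound is needed there. The integrand is nonnegative, so for a lower bound you may simply discard the thin sector. The paper integrates only over $D=\{a\le\rho<\rho_0,\ 2\sin^2(x/2)\ge\rho/(4z),\ 2\sin^2(y/2)\ge\rho/(4z)\}$. The discarded angular set has measure $O(z^{-1/2})$. This is where the $z^{-1/2}$ comes from, together with the isotropic term, which costs a factor $1+{\rm UB}_m^{-1}\le 1+z^{-1/2}$. The $k$-term only produces an additive $O(1)$.

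Note also that \eqref{eq:comp.UB} must be applied with $\gamma=(\lambda+\theta_h(p,q))/(a+\rho)$, not $x^2/\rho^2$. The condition $\gamma\ge c_0/z$ then additionally requires $\rho\gtrsim a$, which is why $D$ contains the constraint $a\le\rho$.

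\textbf{The radial step also has a smaller gap.} After $s=a+r$ the denominator is $s+(s-a){\rm LB}_m(s,z)$. Your claim that it is at least $s\,{\rm LB}_m(s,z)$ fails for $s$ near $a$. Using $s-a\ge s/2$ instead would lose a factor $2$. That is fatal, because the constant $4\pi$ must be sharp for the recursion \eqref{eq:def.cm}. The paper bounds the piece over $[a,2a]$ by $\log2$. For $s\ge2a$ it uses $\frac{1}{(s-a){\rm LB}_m}\le\frac{1}{s{\rm LB}_m}+\frac{2a}{s^2{\rm LB}_m}$, so the loss is additive.

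Likewise, the $k$-term $a\,{\rm UB}_m(a,z)$ is not a small perturbation for $s\sim a$. You need the quantitative bound $a\,{\rm UB}_m(a,z)\int_{2a}^{\rho_0}\frac{\rmd s}{s^2{\rm UB}_m(s,z)^2}\le C$. It follows from a dyadic decomposition and $L(a,z)/L(2^{\ell}a,z)\le1+\ell\log2/z$.

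Finally, your worry about a factor $1+{\rm LB}_m^{-1}$ is misplaced. In the upper bound the isotropic term is simply dropped. In the lower bound the multiplier is ${\rm UB}_m\ge\sqrt z$. The weight $r^2+r$ in the primitives of \cite{CHT2022} comes from $\frac{\rmd}{\rmd r}\log(1+r^{-1})$, not from the isotropic term. \eqref{eq:LB.b}--\eqref{eq:UB.b} already convert it to the weight $r$.
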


\begin{proof}
It is enough to prove the estimates
for $k=3$. Let $q\in\mathbb T_3^3$, and set
$$
x:=\sum_{\ell=1}^n p_\ell^1+q^1,
\quad
y:=\sum_{\ell=1}^n p_\ell^2+q^2,
\quad
a:=\lambda+\theta_3(p), \quad \rho
:=
2\sin^2\left(\frac{x}{2}\right)
+
2\sin^2\left(\frac{y}{2}\right).
$$
Since $q^3=0$, we have $\lambda+\theta(p,q)=a+\rho$. Fix
$\rho_0\in(0,1/2)$. Since
$
\sin^2x+\sin^2y
=
2\rho
-
4\sin^4\left(\frac{x}{2}\right)
-
4\sin^4\left(\frac{y}{2}\right),
$
whenever $\rho<\rho_0$, we have
\begin{equation}
\label{eq:diag.sin.rho}
\rho \le \sin^2x+\sin^2y\le 2\rho
\quad\text{and}\quad
0\le
\rho-\frac12(\sin^2x+\sin^2y)
\le
\frac12\rho^2.
\end{equation}

We first prove the upper bound. Since $\mathfrak l_m\ge0$, on $\{\rho\ge\rho_0\}$, we have
$
{\lambda+\theta(p,q)+\mathfrak l_m(\lambda,(p,q),z)}
\ge {a+\rho} \ge {\rho_0}.
$
Therefore
\begin{equation}
\label{eq:diag.upper.rho.large}
\int_{\{\rho\ge\rho_0\}}
\frac{\rmd q}
{\lambda+\theta(p,q)+\mathfrak l_m(\lambda,(p,q),z)}
\le C.
\end{equation}

We now consider the integral on $\{\rho<\rho_0\}$. For every non-negative continuous function $F$ on $[0,\rho_0]$, by the changes of variables
$
u=\sqrt2\sin(x/2),\ v=\sqrt2\sin(y/2)
$
and then
$
u=\sqrt r\cos\varphi,\ v=\sqrt r\sin\varphi
$, we have
\begin{equation}
\label{eq:diag.coarea}
\int_{\{\rho<\rho_0\}}F(\rho)\rmd q
=
\int_0^{\rho_0}J(r)F(r)\rmd r,
\end{equation}
where
\begin{equation}
\label{eq:diag.coarea.density}
J(r)
:=
\int_0^{2\pi}
\frac{\rmd\varphi}
{
\sqrt{1-\frac r2\cos^2\varphi}
\sqrt{1-\frac r2\sin^2\varphi}
}
=
2\pi+O(r)
\quad\text{for } 0<r<\rho_0.
\end{equation}

For $h=1,2$, we note that
$\lambda+\theta_h(p,q)\le a+\rho$.
Since $x\mapsto{\rm LB}_m(x,z)$ is non-increasing, we have
$
{\rm LB}_m(\lambda+\theta_h(p,q),z)
\ge
{\rm LB}_m(a+\rho,z)$.
Hence
\begin{align*}
\mathfrak l_m(\lambda,(p,q),z)
&=
\frac12
\sum_{h=1}^3
{\rm LB}_m(\lambda+\theta_h(p,q),z)\sin^2\Big(\sum_{\ell=1}^n p_\ell^h+q^h\Big)
\ge
\frac12(\sin^2x+\sin^2y)
{\rm LB}_m(a+\rho,z).
\end{align*}
Combining this with \eqref{eq:diag.sin.rho}, we thus have
$
\mathfrak l_m(\lambda,(p,q),z)
\ge
\left(\rho-\frac12\rho^2\right)
{\rm LB}_m(a+\rho,z)
$
when $\rho<\rho_0$. Therefore,
\begin{align*}
\lambda+\theta(p,q)+\mathfrak l_m(\lambda,(p,q),z)
&\ge
a+\rho+
\left(\rho-\frac12\rho^2\right)
{\rm LB}_m(a+\rho,z)
\ge
\left(1-\frac{\rho}{2}\right)
\left[
a+\rho+\rho{\rm LB}_m(a+\rho,z)
\right].
\end{align*}
Since $\rho<\rho_0<1/2$, we have
$
(1-\rho/2)^{-1}\le1+C\rho.
$
Using this inequality together with
\eqref{eq:diag.coarea}-\eqref{eq:diag.coarea.density}, we obtain
\begin{align*}
\int_{\{\rho<\rho_0\}}
\frac{\rmd q}
{\lambda+\theta(p,q)+\mathfrak l_m(\lambda,(p,q),z)}
&\le
\int_0^{\rho_0}
\frac{2\pi+Cr}
{a+r+r{\rm LB}_m(a+r,z)}
\rmd r\\\le
&2\pi
\int_0^{\rho_0}
\frac{\rmd r}
{a+r+r{\rm LB}_m(a+r,z)}
+
C,
\end{align*}
where in the last inequality we used
$
a+r+r{\rm LB}_m(a+r,z)\ge r.
$
Combining this with \eqref{eq:diag.upper.rho.large}, we get
\begin{equation}
\label{eq:diag.upper.radial}
\int_{\mathbb T_3^3}
\frac{\rmd q}
{\lambda+\theta(p,q)+\mathfrak l_m(\lambda,(p,q),z)}
\le
2\pi
\int_0^{\rho_0}
\frac{\rmd r}
{a+r+r{\rm LB}_m(a+r,z)}
+
C.
\end{equation}

If $a\ge\rho_0$, then
$$
\int_0^{\rho_0}
\frac{\rmd r}
{a+r+r{\rm LB}_m(a+r,z)}
\le
\int_0^{\rho_0}\frac{\rmd r}{a}
\le1.
$$
Since ${\rm UB}_m(a,z)\ge1$, the upper bound follows. Assume now that
$0<a<\rho_0$. Changing variables $s=a+r$, we obtain
$$
\int_0^{\rho_0}
\frac{\rmd r}
{a+r+r{\rm LB}_m(a+r,z)}
=
\int_a^{a+\rho_0}
\frac{\rmd s}
{s+(s-a){\rm LB}_m(s,z)}.
$$
For $s\in[a,2a]$,
$$
\int_a^{2a}
\frac{\rmd s}
{s+(s-a){\rm LB}_m(s,z)}
\le
\int_a^{2a}\frac{\rmd s}{s}
=
\log2.
$$
For $s\ge2a$, we have $s-a\ge s/2$, and hence
\begin{align*}
\frac{1}
{s+(s-a){\rm LB}_m(s,z)}
&\le
\frac{1}{(s-a){\rm LB}_m(s,z)}
=
\frac{1}{s{\rm LB}_m(s,z)}
+
\frac{a}{s(s-a){\rm LB}_m(s,z)}
\\
&\le
\frac{1}{s{\rm LB}_m(s,z)}
+
\frac{2a}{s^2{\rm LB}_m(s,z)}.
\end{align*}
Since ${\rm LB}_m(s,z)\ge1$,
$$
\int_{2a}^{a+\rho_0}
\frac{a\rmd s}{s^2{\rm LB}_m(s,z)}
\le C.
$$
Consequently,
$$
\int_0^{\rho_0}
\frac{\rmd r}
{a+r+r{\rm LB}_m(a+r,z)}
\le
\int_a^1
\frac{\rmd s}{s{\rm LB}_m(s,z)}
+
C.
$$
By Lemma~\ref{lem:ULB}.ii,
$$
\int_a^1
\frac{\rmd s}{s{\rm LB}_m(s,z)}
\le
2{\rm UB}_m(a,z)
+
\frac{{\rm UB}_m(a,z)}{z}.
$$
Therefore
\begin{align*}
\int_{\mathbb T_3^3}
\frac{\rmd q}
{\lambda+\theta(p,q)+\mathfrak l_m(\lambda,(p,q),z)}
&\le
4\pi
\left(1+\frac{C}{z}\right)
{\rm UB}_m(a,z)
+
C.
\end{align*}
This implies \eqref{eq:diagonal.upper.integral}.

We now prove the lower bound. If $a\ge\rho_0/4$, then by the monotonicity
of ${\rm LB}_{m+1}$ when $a\ge1$, and
Lemma~\ref{lem:ULB}.iv, applied with
$B=4/\rho_0$ and by continuity, when $\rho_0/4\le a<1$, we have
$
{\rm LB}_{m+1}(a,z)
\le
C{\rm LB}_{m+1}(1,z).
$
Hence, for sufficiently large $C$, the right-hand side of
\eqref{eq:diagonal.lower.integral} is non-positive, and the lower bound
follows. Assume from now on that $a<\rho_0/4$.
We restrict the integral to the set
$$
D:=\Big\{q\in\mathbb T_3^3 :a\le \rho<\rho_0,\;  
2\sin^2\left(\frac{x}{2}\right)\ge (4z)^{-1}\rho,\;  2\sin^2\left(\frac{y}{2}\right)\ge (4z)^{-1}\rho\Big\}.
$$
On $\{\rho<\rho_0\}$, put
$
u=\sqrt2\sin\left(\frac{x}{2}\right),
v=\sqrt2\sin\left(\frac{y}{2}\right).
$
Then $\rho=u^2+v^2$. Writing
$
u=r^{1/2}\cos\varphi
$
and
$
v=r^{1/2}\sin\varphi,
$
the two angular conditions in the definition of $D$ become
$
\cos^2\varphi\ge(4z)^{-1}
$
and
$
\sin^2\varphi\ge(4z)^{-1}.
$
The complement of this set in $[0,2\pi)$ has Lebesgue measure
$
8\arcsin\left((4z)^{-1/2}\right)\le \frac{4\pi}{3}z^{-1/2}.
$
Since 
$\left( 1-\frac r2\cos^2\varphi\right)^{-1/2}
\left(1-\frac r2\sin^2\varphi\right)^{-1/2}\ge 1$, for every non-negative continuous function $F$,
\begin{align}
\int_DF(\rho)\rmd q
&\ge
2\pi
\left(
1-\frac{2}{3}z^{-1/2}
\right)
\int_a^{\rho_0}F(r)\rmd r.
\label{eq:diag.restricted.coarea}
\end{align}

Since $a\le\rho$ on $D$, for $h=1,2$ we have
$
\frac{1}{8z}
\le
\frac{\lambda+\theta_h(p,q)}{a+\rho}
\le1.
$
Note that $0<a+\rho<5\rho_0/4<1$. Therefore, applying Lemma~\ref{lem:ULB}.iii with
$c_0=1/8$,
$\gamma=\frac{\lambda+\theta_h(p,q)}{a+\rho}$ and $r=a+\rho$,
we obtain
$$
{\rm UB}_m(\lambda+\theta_h(p,q),z)
\le
{\rm UB}_m(a+\rho,z)
\left[
1+
C\frac{(m+1)(1+\log z)}{z}
\right]\quad\text{for } h=1,2.
$$
Furthermore,
\begin{align*}
    \frac12\sum_{h=1}^2\sin^2\Big(\sum_{\ell=1}^n p_\ell^h+q^h\Big)
&=
\frac12(\sin^2x+\sin^2y)
\le
\rho,\quad
\frac12\sin^2\Big(\sum_{\ell=1}^n p_\ell^3\Big)
\le
\theta_3(p)
\le a.
\end{align*}
Since $\lambda+\theta_3(p)=a$, we obtain
\begin{align}
\mathfrak u_m(\lambda,(p,q),z)
&=
\frac12
\sum_{h=1}^3
{\rm UB}_m(\lambda+\theta_h(p,q),z)\sin^2\Big(\sum_{\ell=1}^n p_\ell^h+q^h\Big)
\nonumber\\
&\le
\rho{\rm UB}_m(a+\rho,z)
\left[
1+
C\frac{(m+1)(1+\log z)}{z}
\right]
+
a{\rm UB}_m(a,z).
\label{eq:diag.u.upper.angular}
\end{align}
Therefore
\begin{align*}
\lambda+\theta(p,q)+\mathfrak u_m(\lambda,(p,q),z)
&\le
a+\rho
+
\rho{\rm UB}_m(a+\rho,z)
+
a{\rm UB}_m(a,z)
\\
&\quad+
C\frac{(m+1)(1+\log z)}{z}
\rho{\rm UB}_m(a+\rho,z)
\\
&\le
\left[
1+
C\frac{(m+1)(1+\log z)}{z}
\right]
\left[
a+\rho
+
\rho{\rm UB}_m(a+\rho,z)
+
a{\rm UB}_m(a,z)
\right].
\end{align*}
Applying \eqref{eq:diag.restricted.coarea} with
$
F(r)=
\left(
a+r+r{\rm UB}_m(a+r,z)+a{\rm UB}_m(a,z)
\right)^{-1},
$
we obtain
\begin{align}
\label{eq:diag.lower.radial}
\int_{\mathbb T_3^3}
\frac{\rmd q}
{\lambda+\theta(p,q)+\mathfrak u_m(\lambda,(p,q),z)}
&\ge
\frac{
2\pi\left(1-\frac{2}{3}z^{-1/2}\right)
}
{1+C\frac{(m+1)(1+\log z)}{z}}
\int_a^{\rho_0}
\frac{\rmd r}
{a+r+r{\rm UB}_m(a+r,z)+a{\rm UB}_m(a,z)}.
\end{align}
Recall that $a<\rho_0/4<\rho_0/2$. Changing variables
$s=a+r$, we obtain
\begin{align}
\int_a^{\rho_0}
\frac{\rmd r}
{a+r+r{\rm UB}_m(a+r,z)+a{\rm UB}_m(a,z)}
&=
\int_{2a}^{a+\rho_0}
\frac{\rmd s}
{s+(s-a){\rm UB}_m(s,z)+a{\rm UB}_m(a,z)}
\nonumber\\
&\ge
\int_{2a}^{\rho_0}
\frac{\rmd s}
{s+(s-a){\rm UB}_m(s,z)+a{\rm UB}_m(a,z)}.
\label{eq:diag.radial.change.s}
\end{align}
For $s\ge2a$, since $x\mapsto{\rm UB}_m(x,z)$ is non-increasing,
we have ${\rm UB}_m(a,z)\ge{\rm UB}_m(s,z)$ and
$s+(s-a){\rm UB}_m(s,z)+a{\rm UB}_m(a,z)\ge s{\rm UB}_m(s,z)$.
Therefore,
\begin{align*}
&\frac{1}{s{\rm UB}_m(s,z)}
-
\frac{1}
{s+(s-a){\rm UB}_m(s,z)+a{\rm UB}_m(a,z)}
\\
&=
\frac{
s+(s-a){\rm UB}_m(s,z)+a{\rm UB}_m(a,z)
-
s{\rm UB}_m(s,z)
}
{
s{\rm UB}_m(s,z)
\left[
s+(s-a){\rm UB}_m(s,z)+a{\rm UB}_m(a,z)
\right]
}
\le
\frac{s+a{\rm UB}_m(a,z)}
{s^2{\rm UB}_m(s,z)^2}.
\end{align*}
Therefore
\begin{align}\nonumber
\int_{2a}^{\rho_0}
\frac{\rmd s}
{s+(s-a){\rm UB}_m(s,z)+a{\rm UB}_m(a,z)}
&\ge
\int_{2a}^{\rho_0}\frac{\rmd s}{s{\rm UB}_m(s,z)}
- \int_{2a}^{\rho_0}\frac{\rmd s}{s{\rm UB}_m(s,z)^2}\\
&\quad -
a{\rm UB}_m(a,z)
\int_{2a}^{\rho_0}
\frac{\rmd s}
{s^2{\rm UB}_m(s,z)^2}.
\label{eq:diag.radial.error.decomposition}
\end{align}
By Lemma~\ref{lem:ULB}.i, we have
$
{\rm UB}_m(s,z)\ge\sqrt{L(s,z)}\ge\sqrt z.
$
Hence
\begin{equation}
\label{eq:diag.radial.first.error}
\int_{2a}^{\rho_0}
\frac{\rmd s}
{s{\rm UB}_m(s,z)^2}
\le
z^{-1/2}
\int_{2a}^{\rho_0}
\frac{\rmd s}
{s{\rm UB}_m(s,z)}.
\end{equation}
We next prove
\begin{equation}
\label{eq:diag.radial.second.error}
a{\rm UB}_m(a,z)
\int_{2a}^{\rho_0}
\frac{\rmd s}
{s^2{\rm UB}_m(s,z)^2}
\le C.
\end{equation}
Decompose $[2a,\rho_0]$ into dyadic intervals
$[2^\ell a,2^{\ell+1}a]$, with the last interval truncated at $\rho_0$. Since $x\mapsto{\rm UB}_m(x,z)$ is non-increasing,
for $s\in[2^\ell a,2^{\ell+1}a]$, we have
$
{\rm UB}_m(s,z)\ge{\rm UB}_m(2^{\ell+1}a,z).
$
Thus
\begin{align*}
&a{\rm UB}_m(a,z)
\int_{2^\ell a}^{2^{\ell+1}a}
\frac{\rmd s}{s^2{\rm UB}_m(s,z)^2}
\le
C2^{-\ell}
\frac{{\rm UB}_m(a,z)}
{{\rm UB}_m(2^{\ell+1}a,z)^2}
\le
C2^{-\ell}
\frac{{\rm UB}_m(a,z)}
{{\rm UB}_m(2^{\ell+1}a,z)}.
\end{align*}
Since ${\rm UB}_m=L/{\rm LB}_m$ and $x\mapsto{\rm LB}_m(x,z)$ is
non-increasing,
$$
\frac{{\rm UB}_m(a,z)}
{{\rm UB}_m(2^{\ell+1}a,z)}
=
\frac{L(a,z)}{L(2^{\ell+1}a,z)}
\frac{{\rm LB}_m(2^{\ell+1}a,z)}
{{\rm LB}_m(a,z)}
\le
\frac{L(a,z)}{L(2^{\ell+1}a,z)}.
$$
Moreover,
$
0\le
L(a,z)-L(2^{\ell+1}a,z)
\le
(\ell+1)\log2.
$
Consequently,
\begin{align*}
\frac{L(a,z)}{L(2^{\ell+1}a,z)}
&=
1+
\frac{L(a,z)-L(2^{\ell+1}a,z)}
{L(2^{\ell+1}a,z)}\le
1+
\frac{(\ell+1)\log2}{z}.
\end{align*}
Therefore
$$
\sum_{\ell\ge1}
2^{-\ell}
\frac{L(a,z)}{L(2^{\ell+1}a,z)}
\le
\sum_{\ell\ge1}
2^{-\ell}
\left(1+\frac{(\ell+1)\log2}{z} \right)
\le C.
$$
This proves \eqref{eq:diag.radial.second.error}. 

Combining \eqref{eq:diag.radial.change.s}, \eqref{eq:diag.radial.error.decomposition}, \eqref{eq:diag.radial.first.error}, and \eqref{eq:diag.radial.second.error}, we get
\begin{align}
\label{eq:diag.am.error}
\int_a^{\rho_0} \frac{\rmd r}{a+r+r{\rm UB}_m(a+r,z)+a{\rm UB}_m(a,z)} &\ge \left(1-z^{-1/2}\right) \int_{2a}^{\rho_0}\frac{\rmd s}{s{\rm UB}_m(s,z)}-C. \end{align}
By Lemma~\ref{lem:ULB}.ii,
$$
\int_{2a}^{\rho_0} \frac{\rmd s}{s{\rm UB}_m(s,z)} \ge 2\left( {\rm LB}_{m+1}(2a,z)- {\rm LB}_{m+1}(\rho_0,z) \right).
$$
By Lemma~\ref{lem:ULB}.v, applied with $B=2$,
$$ {\rm LB}_{m+1}(2a,z) \ge {\rm LB}_{m+1}(a,z)\left[1-C\frac{m+1}{z}\right]-C{\rm LB}_{m+1}(1,z).
$$
Moreover, by Lemma~\ref{lem:ULB}.iv, applied with $B=1/\rho_0$ and by continuity, we have
${\rm LB}_{m+1}(\rho_0,z)\le C{\rm LB}_{m+1}(1,z).$
Combining these estimates with \eqref{eq:diag.am.error}, we obtain
\begin{align}
&\int_a^{\rho_0}
\frac{\rmd r} {a+r+r{\rm UB}_m(a+r,z)+a{\rm UB}_m(a,z)} \nonumber\\ &\quad\ge 2\left[ 1- C\left( z^{-1/2}+\frac{m+1}{z} \right) \right] {\rm LB}_{m+1}(a,z) - C{\rm LB}_{m+1}(1,z)-C.
\label{eq:diag.radial.lower.primitive}
\end{align}
Combining \eqref{eq:diag.lower.radial} and \eqref{eq:diag.radial.lower.primitive}, using $(1+x)^{-1}\ge1-x$ for $x\ge0$, we get
\begin{align*}
\int_{\mathbb T_3^3} \frac{\rmd q} {\lambda+\theta(p,q)+\mathfrak u_m(\lambda,(p,q),z)} &\ge 4\pi\left[1-C\left(z^{-1/2}+\frac{(m+1)(1+\log z)}{z}\right)\right]{\rm LB}_{m+1}(a,z)\nonumber\\
&\quad -C{\rm LB}_{m+1}(1,z) - C,
\end{align*}
for sufficiently large $C$. Recalling that $a=\lambda+\theta_3(p)$, the lower bound follows for $k=3$. The cases $k=1,2$ follow by symmetry. The proof is complete.
\end{proof}

The next lemma controls the integral appearing in $V_{\mathrm{Off}}(R_m,f)$.

\begin{lemma}
\label{lem:schur.offdiag}
There exists a constant $C<\infty$ such that, for every $m\ge1$, $n\ge1$, $z>1$, $\lambda\in(0,1)$ and $f\in \ell_0^2(U,3,n)$, we have
\begin{align}
\label{eq:two.schur.lower}
&\sum_{j\in[3]^n}\sum_{k=1}^3 \int_{\mathbb T_j^3}\int_{\mathbb T_k^3}\frac{ \Big|\sin\Big(\sum_{\ell=1}^n p_\ell^k\Big)\Big|\cdot\Big|\sin\Big(\sum_{\ell=1}^{n-1}p_\ell^{j_n}+q^{j_n}\Big)\Big|} {\lambda+\theta(p,q)+K_1^{-1}z^{-1/2}\mathfrak l_{m-1}(\lambda,(p,q),z)}|\widehat f_j(p)||\widehat f_{j_{1:n-1},k}(p_{1:n-1},q)| \rmd q\rmd p\nonumber\\
&\quad\le C\sum_{j\in[3]^n}\int_{\mathbb T_j^3}\mathfrak u_{m-1}(\lambda,p,z) |\widehat f_j(p)|^2\rmd p,
\\ \label{eq:two.schur.upper}
&\sum_{j\in[3]^n}\sum_{k=1}^3\int_{\mathbb T_j^3}\int_{\mathbb T_k^3}\frac{\Big|\sin\Big(\sum_{\ell=1}^n p_\ell^k\Big)\Big|\cdot\Big|\sin\Big(\sum_{\ell=1}^{n-1}p_\ell^{j_n}+q^{j_n}\Big)\Big|}{\lambda+\theta(p,q)+K_1c_{2m}\sqrt z\,\mathfrak u_{m-1}(\lambda,(p,q),z)}|\widehat f_j(p)||\widehat f_{j_{1:n-1},k}(p_{1:n-1},q)| \rmd q\rmd p
\nonumber\\ &\quad\le \frac{C}{K_1c_{2m}z}\sum_{j\in[3]^n}\int_{\mathbb T_j^3}\theta(p)|\widehat f_j(p)|^2\rmd p.
\end{align}
\end{lemma}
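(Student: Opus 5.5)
The plan is a weighted Schur test (weighted Cauchy--Schwarz) that splits the kernel symmetrically between $\widehat f_j(p)$ and $\widehat f_{j_{1:n-1},k}(p_{1:n-1},q)$. Write $P=\sum_{\ell=1}^{n-1}p_\ell$, so that the total momentum of $(p,q)$ is $P+p_n+q$. The two sines in the numerator are then $\sin((P+p_n)^k)$ and $\sin((P+q)^{j_n})$. Each sine is bounded by the corresponding coordinate of the total momentum of one of the two functions: $|\sin((P+p_n)^k)|$ is the $k$-th coordinate factor of $\widehat f_j(p)$, and $|\sin((P+q)^{j_n})|$ is the $j_n$-th coordinate factor of $\widehat f_{j_{1:n-1},k}(p_{1:n-1},q)$. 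Using $2ab\le a^2w+b^2/w$ with a weight $w$ to be chosen, and then relabelling $(j_n,p_n)\leftrightarrow(k,q)$ in the second term (the sum is symmetric under this swap since $\theta(p,q)$ and the multipliers depend only on total momentum), both terms reduce to
\[
\sum_{j}\int_{\mathbb T_j^3}|\widehat f_j(p)|^2\,\sum_{k}\sin^2\Big(\sum_{\ell}p_\ell^k\Big)\int_{\mathbb T_k^3}\frac{W(p,q)\,\rmd q}{\lambda+\theta(p,q)+(\cdots)}\,\rmd p .
\]
The weight $W$ is the ratio of the companion sine factor and the weight, and the goal is to choose $w$ so that the $q$-integral behaves like $\mathrm{LB}$ or $\mathrm{UB}$.

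For \eqref{eq:two.schur.lower}, I would take $w\equiv1$. The $q$-integral $\int_{\mathbb T_k^3}\frac{\rmd q}{\lambda+\theta(p,q)+K_1^{-1}z^{-1/2}\mathfrak l_{m-1}}$ is then bounded, dropping the $\mathfrak l$ term, by $C\,L(\lambda+\theta_k(p),z)$; this is the logarithmic estimate of Lemma~\ref{lem:trace.estimate}, with $z$ absorbed since $L\ge z$. The obstacle is that $L$ is larger than $\mathrm{UB}_{m-1}=L/\mathrm{LB}_{m-1}$. To recover the factor $\mathrm{LB}_{m-1}$, I would instead redo the computation of \eqref{eq:diagonal.upper.integral} with $\mathfrak l_{m-1}$ scaled by $K_1^{-1}z^{-1/2}$. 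Because $\mathrm{LB}_{m-1}\le\sqrt L$, the scaled term $z^{-1/2}\mathrm{LB}_{m-1}$ is of order one, not large, so this alone does not give $\mathrm{UB}$. Hence a nontrivial weight is needed. I would choose
\[
w=\left(\frac{\mathrm{UB}_{m-1}(\lambda+\theta_{j_n}(p),z)}{\mathrm{UB}_{m-1}(\lambda+\theta_k(p_{1:n-1},q),z)}\right)^{1/2},
\]
so that each term carries $\mathrm{UB}_{m-1}$ at the correct momentum, matching $\mathfrak u_{m-1}(\lambda,p,z)$ on the right-hand side. The remaining $q$-integral then has the form $\int\frac{\sin^2(\cdot)\,\mathrm{UB}^{-1}\,\rmd q}{\lambda+\theta}$, which is bounded by $\int\frac{\rmd s}{s\,\mathrm{UB}_{m-1}}\lesssim \mathrm{LB}_m\le C$ times a factor controlled by Lemma~\ref{lem:ULB}.ii. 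I expect checking that these ratios collapse to a constant to be the main obstacle: the weight couples two different coordinates $k$ and $j_n$. I would handle it by the radial reduction \eqref{eq:diag.coarea} together with the monotonicity and comparability statements in Lemma~\ref{lem:ULB}.iii--iv.

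For \eqref{eq:two.schur.upper}, the denominator contains the large term $K_1c_{2m}\sqrt z\,\mathfrak u_{m-1}\ge K_1c_{2m}\sqrt z\cdot\frac12\sum_h\sqrt{L}\sin^2(\cdot)\ge \frac12 K_1c_{2m}z\,\sum_h\sin^2(\cdot)$, using $\mathrm{UB}\ge\sqrt L\ge\sqrt z$. After the same symmetric Cauchy--Schwarz with $w\equiv1$, each term is bounded by $\sin^2(\cdot)$ times $\int\frac{\sin^2((P+q)^{j_n})\,\rmd q}{K_1c_{2m}z\,\sin^2((P+q)^{j_n})}\le \frac{C}{K_1c_{2m}z}$. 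The prefactor $\sin^2(\sum_\ell p_\ell^k)\le 2\theta_k(p)$ then yields $\frac{C}{K_1c_{2m}z}\theta(p)$, as required.
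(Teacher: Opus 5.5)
\textbf{There is a genuine gap.} In both parts you apply Cauchy--Schwarz with a weight that is constant in the integration variables $(p_n,q)$. This turns the off-diagonal form into a diagonal-type integral and brings back a logarithmic divergence that the off-diagonal kernel does not actually have.

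For \eqref{eq:two.schur.lower}, your weight does not depend on $(p_n,q)$ at all. Since $p_n^{j_n}=0$ and $q^k=0$, one has $\theta_{j_n}(p)=2\sin^2(\frac12P^{j_n})$ and $\theta_k(p_{1:n-1},q)=2\sin^2(\frac12P^k)$. So the weight only rebalances the two terms by a $P$-dependent factor, and the $q$-integral is still $\int_{\mathbb T_k^3}\rmd q/(\lambda+\theta(p,q)+K_1^{-1}z^{-1/2}\mathfrak l_{m-1})$. Take $a=\lambda+\theta_k(p)$, $m$ large enough that ${\rm LB}_{m-1}\approx\sqrt L$, and $\log(1/a)\gg K_1^2z$. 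Then this integral is of order $\int_0^{\log(1/a)}\rmd t/(1+K_1^{-1}\sqrt{t/z})\approx K_1\sqrt{z\log(1/a)}$, while ${\rm UB}_{m-1}(a,z)\approx\sqrt{\log(1/a)}$. You therefore lose a factor $K_1\sqrt z$, which is exactly the margin Proposition~\ref{lem:offdiag.estimates} uses to absorb the factor $n$. In addition, your step $\int\rmd s/(s\,{\rm UB}_{m-1})\lesssim{\rm LB}_m\le C$ is false: ${\rm LB}_m$ is only bounded by $\sqrt L$, which is unbounded as $\lambda\to0$.

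For \eqref{eq:two.schur.upper}, the symmetric split with $w\equiv1$ leaves
\[
\sin^2\Big(\sum_\ell p_\ell^k\Big)\,|\widehat f_j(p)|^2\int_{\mathbb T_k^3}\frac{\rmd q}{\lambda+\theta(p,q)+Z\,W\big(\sum_\ell p_\ell+q\big)},
\]
where $Z=\frac12K_1c_{2m}z$ and $W(s)=\sum_h\sin^2(s^h)$. The factor $\sin^2(\sum_{\ell<n}p_\ell^{j_n}+q^{j_n})$ that you placed in the numerator is not there: it went to the other function. Moreover, $1/(Z\sin^2(\cdot))$ alone is not integrable. The actual $q$-integral runs over a two-dimensional slice passing at distance about $|\sin(\sum_\ell p_\ell^k)|$ from the zero of $W$ at the origin. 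So for small $\lambda$ and $\sum_\ell p_\ell$ near $0$ it is of order $Z^{-1}\log(1/|\sin(\sum_\ell p_\ell^k)|)$, and $\sin^2x\,\log(1/|\sin x|)$ is not bounded by $C\,\theta(p)$.

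\textbf{Missing idea.} You need a Schur test whose weights depend on each function's own total momentum. With $K$ the kernel, write $K|\widehat f_j(p)||\widehat f_{j_{1:n-1},k}(p_{1:n-1},q)|$ as the product of $[K\,\frac{W(P+p_n)}{W(P+q)}|\widehat f_j(p)|^2]^{1/2}$ and $[K\,\frac{W(P+q)}{W(P+p_n)}|\widehat f_{j_{1:n-1},k}(p_{1:n-1},q)|^2]^{1/2}$. Then the $q$-dependent sine $|\sin(P^{j_n}+q^{j_n})|$ stays inside the $q$-integral, where it vanishes linearly in a transverse direction at each zero of $W(P+q)$. Everything reduces to the uniform fiber bound
\[
|\sin(P^k+r^k)|\int_{\mathbb T_k^3}\frac{|\sin(P^i+q^i)|}{W(P+q+r)\,W(P+q)}\rmd q\le C,\qquad r\in\mathbb T_i^3,
\]
which is proved by local analysis near $\{0,\pi\}^3$. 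Let $\alpha$ be the distance of the frozen coordinate $P^k$ to $\{0,\pi\}$.
\begin{itemize}
\item For $i\neq k$, the integrand behaves like $|u|/(\alpha^2+u^2+v^2)$, which is integrable in two dimensions uniformly in $\alpha$.
\item For $i=k$, the numerator equals $\sin^2(P^i)\le\alpha^2$, which kills the $\log(1/\alpha)$.
\item Where $P+q$ and $P+q+r$ are both near zeros of $W$, one combines the two denominators by a Feynman parametrization, with the outer factor $|\sin(P^k+r^k)|$ compensating the second one.
\end{itemize}
This shows the off-diagonal form is controlled by the Dirichlet form with no logarithm at all, so no ${\rm LB}/{\rm UB}$ bookkeeping is needed. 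Then \eqref{eq:two.schur.lower} follows by dropping $\mathfrak l_{m-1}$ and using $W\le2\theta$ and $\mathfrak u_{m-1}\ge\frac12W$. And \eqref{eq:two.schur.upper} follows from your (correct) lower bound $Z\,W$ on the denominator, with weights $\theta(P+p_n)$ and $\theta(P+q)$, which yields the factor $C/Z$.
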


\begin{proof}
Set
$$
W(s):=\sum_{h=1}^3\sin^2(s^h)
\quad\text{for } s=(s^1, s^2, s^3)\in\mathbb T^3.
$$
We first prove that there exists $C<\infty$ such that, for every
$P\in\mathbb T^3$, $i,k\in[3]$, and $r\in\mathbb T_i^3$,
\begin{equation}
\label{eq.weighted.fiber}
\left|\sin(P^k+r^k)\right|
\int_{\mathbb T_k^3}
\frac{
\left|\sin(P^i+q^i)\right|
}
{
W(P+q+r)W(P+q)
}
\rmd q
\le C.
\end{equation}
For fixed $P\in\mathbb T^3$ and $r\in\mathbb T_i^3$, the set
$\{q\in\mathbb T_k^3 : W(P+q) W(P+q+r)=0\}$ is finite. Removing this
finite set does not change the integral in
\eqref{eq.weighted.fiber}; all fractions below are therefore
considered outside their sets of zero denominators.

Let $\Omega:=\{0,\pi\}^3$. For $x,y\in\mathbb T$, set
$
d_{\mathbb T}(x,y)
:=
\min\{|t|:t\in\mathbb R,\ t+2\pi\mathbb Z=x-y\}.
$
Fix $\eta\in(0,\pi/8)$ and, for every $\omega\in\Omega$, set
$$
U_\omega
:=
\left\{
s\in\mathbb T^3:
d_{\mathbb T}(s^h,\omega^h)<\eta
\text{ for every }h\in[3]
\right\}.
$$
The sets $(U_\omega)_{\omega\in\Omega}$ are pairwise disjoint. Indeed,
if $s\in U_\omega\cap U_{\omega'}$ for distinct
$\omega,\omega'\in\Omega$, then, for some $h\in[3]$,
\begin{align*}
\pi
&=
d_{\mathbb T}(\omega^h,(\omega')^h)
\le
d_{\mathbb T}(\omega^h,s^h)
+
d_{\mathbb T}(s^h,(\omega')^h)
<2\eta<\pi,
\end{align*}
which is impossible.

Since $\omega^h\in\{0,\pi\}$, we note that
$|\sin(s^h)|=\sin(d_{\mathbb T}(s^h,\omega^h))$ for every
$s\in U_\omega$ and $h\in[3]$. Since $d_{\mathbb T}(s^h,\omega^h)<\eta<\pi/2$, we have
$
\frac{2}{\pi}d_{\mathbb T}(s^h,\omega^h)
\le
|\sin(s^h)|
\le
d_{\mathbb T}(s^h,\omega^h).
$
Consequently, there exist $c>0$ and $C<\infty$ such that
\begin{equation}
\label{eq:W.local}
c\sum_{h=1}^3d_{\mathbb T}(s^h,\omega^h)^2
\le
W(s)
\le
C\sum_{h=1}^3d_{\mathbb T}(s^h,\omega^h)^2,
\quad
s\in U_\omega.
\end{equation}
Moreover, $W$ is bounded from below by a positive constant on
$
\mathbb T^3
\setminus
\bigcup_{\omega\in\Omega}U_\omega,
$
since this set is compact and contains no zero of $W$.

Since $q^k=0$ and $r^i=0$, we have
$\sin(P^k+r^k)=\sin(P^k+q^k+r^k)$ and
$\sin(P^i+q^i)=\sin(P^i+q^i+r^i)$.
Consequently, $\left|
\sin(P^k+r^k)
\sin(P^i+q^i)
\right|
\le
W(P+q+r)$.
Since $W(P+q)\ge c$ whenever
$
P+q\notin\bigcup_{\omega\in\Omega}U_\omega,
$
we obtain
\begin{align}\label{eq:dom1}
&\left|\sin(P^k+r^k)\right|
\int_{\{q\in\mathbb T_k^3:\,P+q\notin
\bigcup_{\omega\in\Omega}U_\omega\}}
\frac{
\left|\sin(P^i+q^i)\right|
}
{
W(P+q+r)W(P+q)
}
\rmd q
\le C.
\end{align}

Fix $\omega\in\Omega$. We next integrate over the set
$
\{q\in\mathbb T_k^3:P+q\in U_\omega,\,
P+q+r\notin\bigcup_{\omega'\in\Omega}U_{\omega'}\}.
$
On this set, $W(P+q+r)$ is bounded from below by a positive constant. Suppose first that $i\ne k$, and let $\ell$ be the unique element of
$[3]\setminus\{i,k\}$. Let $\alpha,u,v\in(-\eta,\eta)$ be determined
by
$$
P^k\equiv\omega^k+\alpha,\quad
P^i+q^i\equiv\omega^i+u,\quad
P^\ell+q^\ell\equiv\omega^\ell+v
\pmod{2\pi}.
$$
By
\eqref{eq:W.local},
$$
\frac{|\sin(P^i+q^i)|}{W(P+q)}
\le
\frac{C|u|}{\alpha^2+u^2+v^2}.
$$
The change from $q$ to $(u,v)$ preserves Lebesgue measure. Therefore, the integral in
\eqref{eq.weighted.fiber} over this set is bounded by
\begin{align*}
C\int_{[-\eta,\eta]^2}
\frac{|u|}{\alpha^2+u^2+v^2}
\rmd u\rmd v
&\le
C\int_{[-\eta,\eta]^2}
\frac{\rmd u\rmd v}{(u^2+v^2)^{1/2}}
\le 4\pi C\eta.
\end{align*}
Suppose now that $i=k$, and let $\{h,\ell\}=[3]\setminus\{i\}$. Let $\alpha\in(-\eta,\eta)$ and $u,v\in(-\eta,\eta)$ be determined by
$$
P^i\equiv\omega^i+\alpha, \quad P^h+q^h\equiv\omega^h+u,
\quad
P^\ell+q^\ell\equiv\omega^\ell+v\pmod{2\pi}.$$
Since $q^i=r^i=0$, we have
$
\left|
\sin(P^k+r^k)
\sin(P^i+q^i)
\right|
=
\sin^2(P^i)
\le
\alpha^2.
$
Therefore, for $\alpha\ne0$, the integral in
\eqref{eq.weighted.fiber} over this set is bounded by
$$
C\alpha^2
\int_{[-\eta,\eta]^2}
\frac{\rmd u\rmd v}{\alpha^2+u^2+v^2}
\le
C\alpha^2
\int_0^{2\eta}
\frac{t\rmd t}{\alpha^2+t^2}
=
\frac{C\alpha^2}{2}
\log\left(1+\frac{4\eta^2}{\alpha^2}\right)
\le 2C\eta^2.
$$
When $\alpha=0$, the original integral is zero.

It remains to integrate over the sets
$
\{q\in\mathbb T_k^3:P+q\in U_\omega,\,
P+q+r\in U_{\omega'}\}$ for some $\omega,\omega'\in\Omega$.
Suppose first that $i\ne k$, and let $\ell$ be the unique element of
$[3]\setminus\{i,k\}$. Since $r^i=0$ and $\eta<\pi/8$, this set is
empty unless $\omega^i=(\omega')^i$. Assume that
$\omega^i=(\omega')^i$. Let
$\alpha,\beta,u,v\in(-\eta,\eta)$ be determined by
$$
P^k\equiv\omega^k+\alpha,\quad
P^k+r^k\equiv(\omega')^k+\beta,\quad
P^i+q^i\equiv\omega^i+u,\quad
P^\ell+q^\ell\equiv\omega^\ell+v
\pmod{2\pi}.
$$
Since the set under consideration is nonempty, the congruence
$
b\equiv(\omega')^\ell-\omega^\ell-r^\ell
\pmod{2\pi}
$
has a solution $b\in(-2\eta,2\eta)$. This solution is unique as
$4\eta<2\pi$. For every $q$ in this set, we have
$
P^\ell+q^\ell+r^\ell
\equiv
(\omega')^\ell+v-b
\pmod{2\pi},
$
and the condition $P+q+r\in U_{\omega'}$ implies
$v-b\in(-\eta,\eta)$. By
\eqref{eq:W.local},
\begin{align*}
&W(P+q)
\ge
c\left(\alpha^2+u^2+v^2\right),
\quad
W(P+q+r)
\ge
c\left(\beta^2+u^2+(v-b)^2\right),
\\
&|\sin(P^k+r^k)|
\le
C|\beta|,
\quad
|\sin(P^i+q^i)|
\le
C|u|.
\end{align*}
If $\beta=0$, then $\sin(P^k+r^k)=0$. If $\beta\ne0$, the integral
over this set is bounded by
\begin{align*}
&C|\beta|
\int_{\mathbb R^2}
\frac{|u|\rmd u\rmd v}
{
(\alpha^2+u^2+v^2)
(\beta^2+u^2+(v-b)^2)
}
\\
&=
C|\beta|
\int_0^1
\int_{\mathbb R^2}
\frac{|u|\rmd u\rmd v\rmd s}
{
\left[
u^2+(v-(1-s)b)^2
+s\alpha^2+(1-s)\beta^2+s(1-s)b^2
\right]^2
}
\\
&=
C|\beta|
\int_0^1\frac{\rmd s}{\left[s\alpha^2+(1-s)\beta^2+s(1-s)b^2\right]^{1/2}} \le C|\beta|\int_0^1\frac{\rmd s}{|\beta|(1-s)^{1/2}}=2C,
\end{align*}
where first equality follows from the fact that
$$
\frac1{AB}
=
\int_0^1
\frac{\rmd s}{[sA+(1-s)B]^2}
\quad\text{for } A,B>0,
$$ and the second equality
follows from
$$
\int_{\mathbb R^2}
\frac{|u|\rmd u\rmd v}{(u^2+v^2+a^2)^2}
=
\frac{\pi}{a}
\quad\text{for } a>0.
$$
Suppose now that $i=k$. Since $q^i=r^i=0$, the set under
consideration is empty unless $\omega^i=(\omega')^i$. Assume that
$\omega^i=(\omega')^i$. Let $\alpha\in(-\eta,\eta)$ and $y\in(-\eta,\eta)^2$ be determined by
$$
P^i\equiv\omega^i+\alpha\pmod{2\pi},\quad 
P^h+q^h\equiv\omega^h+y^h\pmod{2\pi}
\quad\text{for } h\in[3]\setminus\{i\}.
$$
Since the set under consideration is nonempty, for each
$h\in[3]\setminus\{i\}$, the congruence
$
b^h\equiv(\omega')^h-\omega^h-r^h\pmod{2\pi}
$
has a solution $b^h\in(-2\eta,2\eta)$. This solution is unique as
$4\eta<2\pi$. Thus $b\in(-2\eta,2\eta)^2$ is independent of $q$, and
$$
P^h+q^h+r^h
\equiv
(\omega')^h+y^h-b^h
\pmod{2\pi}\quad \text{for } h\in[3]\setminus\{i\}.
$$
Since $P+q+r\in U_{\omega'}$, we also have
$y-b\in(-\eta,\eta)^2$. If
$\alpha=0$, then
$\sin(P^k+r^k)\sin(P^i+q^i)=\sin^2(P^i)=0$.
If $\alpha\ne0$, the
integral over this set is bounded by
\begin{align*}
C\alpha^2
\int_{\mathbb R^2}
\frac{\rmd y}
{
(\alpha^2+|y|^2)
(\alpha^2+|y-b|^2)
}&=
C\alpha^2
\int_0^1
\int_{\mathbb R^2}
\frac{\rmd y\rmd s}
{
\left[
|y-(1-s)b|^2
+\alpha^2+s(1-s)|b|^2
\right]^2
}
\\
&=
C\pi\alpha^2
\int_0^1
\frac{\rmd s}{\alpha^2+s(1-s)|b|^2}
\le C\pi.
\end{align*}
Since $|\Omega|=8$, this proves
\eqref{eq.weighted.fiber}.

We now prove \eqref{eq:two.schur.lower}. Fix
$j=(j_1,\ldots,j_n)\in[3]^n$ and $k\in[3]$, and set
$P:=\sum_{\ell=1}^{n-1}p_\ell$, $i:=j_n$. For fixed
$(p_1,\ldots,p_{n-1})$, denote by $K(p_n,q)$ the fraction in the
integrand on the left-hand side of
\eqref{eq:two.schur.lower}. Since
$\mathfrak l_{m-1}\ge0$,
$\theta(p,q)=\theta(P+p_n+q)$, and
$W(P+p_n+q)\le2\theta(p,q)$, we have
\begin{align*}
\frac{K(p_n,q)}{W(P+q)}
&\le
2
\frac{
|\sin(P^k+p_n^k)|
|\sin(P^i+q^i)|
}
{W(P+p_n+q)W(P+q)},
\quad
\frac{K(p_n,q)}{W(P+p_n)}
\le
2
\frac{
|\sin(P^k+p_n^k)|
|\sin(P^i+q^i)|
}
{W(P+p_n+q)W(P+p_n)}.
\end{align*}
Therefore, applying \eqref{eq.weighted.fiber}, first with
$(i,k,r)=(j_n,k,p_n)$ and then with
$(i,k,r)=(k,j_n,q)$, we obtain
\begin{align}
\label{eq:schur.lower.row}
\sup_{p_n\in\mathbb T_i^3}
\int_{\mathbb T_k^3}
\frac{K(p_n,q)}{W(P+q)}
\rmd q
&\le C,
\quad
\sup_{q\in\mathbb T_k^3}
\int_{\mathbb T_i^3}
\frac{K(p_n,q)}{W(P+p_n)}
\rmd p_n
\le C.
\end{align}
The quotients in \eqref{eq:schur.lower.row} are defined to be zero where
their additional denominators vanish. Indeed, if $W(P+p_n)=0$, then
$\sin(P^k+p_n^k)=0$, and if $W(P+q)=0$, then
$\sin(P^i+q^i)=0$.

For $W(P+p_n)W(P+q)>0$, we have
\begin{align*}
&K(p_n,q)
|\widehat f_j(p)|
|\widehat f_{j_{1:n-1},k}(p_{1:n-1},q)|
\\
&=
\left[
K(p_n,q)
\frac{W(P+p_n)}{W(P+q)}
|\widehat f_j(p)|^2
\right]^{1/2}
\left[
K(p_n,q)
\frac{W(P+q)}{W(P+p_n)}
|\widehat f_{j_{1:n-1},k}(p_{1:n-1},q)|^2
\right]^{1/2}.
\end{align*}
For $W(P+p_n)W(P+q)=0$, the left-hand side is zero. Applying the
Cauchy--Schwarz inequality and
\eqref{eq:schur.lower.row}, we obtain
\begin{align*}
&\int_{\mathbb T_j^3}
\int_{\mathbb T_k^3}
K(p_n,q)
|\widehat f_j(p)|
|\widehat f_{j_{1:n-1},k}(p_{1:n-1},q)|
\rmd q\rmd p
\\
&\le
C
\left(
\int_{\mathbb T_j^3}
W\Big(\sum_{\ell=1}^n p_\ell\Big)
|\widehat f_j(p)|^2
\rmd p
\right)^{1/2}
\left(
\int_{\mathbb T_{j_{1:n-1},k}^3}
W\Big(\sum_{\ell=1}^n p_\ell\Big)
|\widehat f_{j_{1:n-1},k}(p)|^2
\rmd p
\right)^{1/2}.
\end{align*}
Using $2ab\le a^2+b^2$, summing over $j\in[3]^n$ and $k\in[3]$, and
using that each sequence in $[3]^n$ has exactly three preimages under
$(j_{1:n},k)\mapsto(j_{1:n-1},k)$, we obtain
\begin{align}
\label{eq:schur1}
&\sum_{j\in[3]^n}\sum_{k=1}^3
\int_{\mathbb T_j^3}
\int_{\mathbb T_k^3}
K(p_n,q)
|\widehat f_j(p)|
|\widehat f_{j_{1:n-1},k}(p_{1:n-1},q)|
\rmd q\rmd p
\le
3C
\sum_{j\in[3]^n}
\int_{\mathbb T_j^3}
W\Big(\sum_{\ell=1}^n p_\ell\Big)
|\widehat f_j(p)|^2
\rmd p.
\end{align}
By Lemma~\ref{lem:ULB}.i, ${\rm UB}_{m-1}(x,z)\ge1$, and hence
$
\mathfrak u_{m-1}(\lambda,p,z)
\ge
\frac12
W\left(\sum_{\ell=1}^n p_\ell\right).
$
Combining this with \eqref{eq:schur1} proves
\eqref{eq:two.schur.lower}.

We next prove \eqref{eq:two.schur.upper}. By
Lemma~\ref{lem:ULB}.i, since $L(x,z)\ge z$, we have
$
{\rm UB}_{m-1}(x,z)\ge\sqrt z.
$
Therefore,
\begin{align*}
&\lambda+\theta(p,q)
+
K_1c_{2m}\sqrt z\,
\mathfrak u_{m-1}(\lambda,(p,q),z)
\ge
\lambda+\theta(p,q)
+
\frac{K_1c_{2m}z}{2}
W\Big(\sum_{\ell=1}^n p_\ell+q\Big).
\end{align*}
Fix $Z>0$ and set 
$$K_Z(p_n,q):=\frac{|\sin(P^k+p_n^k)|
|\sin(P^i+q^i)|}{\lambda+\theta(p,q)
+
Z
W\left(\sum_{\ell=1}^n p_\ell+q\right)}.$$
Since $W(s)\le2\theta(s)$ for $s\in\mathbb T^3$, we have
\begin{align*}
\frac{K_Z(p_n,q)}{\theta(P+q)}
&\le \frac{2}{Z}\frac{|\sin(P^k+p_n^k)||\sin(P^i+q^i)|}{W(P+p_n+q)W(P+q)}, \quad \frac{K_Z(p_n,q)}{\theta(P+p_n)}\le \frac{2}{Z} \frac{|\sin(P^k+p_n^k)||\sin(P^i+q^i)| }{W(P+p_n+q)W(P+p_n)}.
\end{align*}
Therefore, applying \eqref{eq.weighted.fiber}, first with $(i,k,r)=(j_n,k,p_n)$ and then with $(i,k,r)=(k,j_n,q)$, we obtain
\begin{align}
\label{eq:schur.upper.row} \sup_{p_n\in\mathbb T_i^3} \int_{\mathbb T_k^3} \frac{K_Z(p_n,q)}{\theta(P+q)} \rmd q &\le \frac{C}{Z}, \quad \sup_{q\in\mathbb T_k^3} \int_{\mathbb T_i^3} \frac{K_Z(p_n,q)}{\theta(P+p_n)} \rmd p_n \le \frac{C}{Z}.
\end{align}
In the first quotient in \eqref{eq:schur.upper.row}, the value is set equal to zero when $\theta(P+q)=0$, and in the second quotient it is set equal to zero when $\theta(P+p_n)=0$. Indeed, if $\theta(P+p_n)=0$, then $\sin(P^k+p_n^k)=0$, and if $\theta(P+q)=0$, then $\sin(P^i+q^i)=0$.

Repeating the preceding Cauchy--Schwarz argument with
$W(P+p_n)$ and $W(P+q)$ replaced by
$\theta(P+p_n)$ and $\theta(P+q)$, respectively, we obtain
\begin{align*}
&\sum_{j\in[3]^n}\sum_{k=1}^3
\int_{\mathbb T_j^3}
\int_{\mathbb T_k^3}
K_Z(p_n,q)
|\widehat f_j(p)|
|\widehat f_{j_{1:n-1},k}(p_{1:n-1},q)|
\rmd q\rmd p\le
\frac{C}{Z}
\sum_{j\in[3]^n}
\int_{\mathbb T_j^3}
\theta(p)|\widehat f_j(p)|^2
\rmd p.
\end{align*}
The fraction appearing in the integrand on the left-hand side of
\eqref{eq:two.schur.upper} is bounded above by $K_Z(p_n,q)$. Setting
$
Z:=\frac{K_1c_{2m}z}{2},
$
the last estimate proves \eqref{eq:two.schur.upper}.
\end{proof}

\subsection{Comparison resolvents}\label{sec:compr.res}
In this subsection, we first use the integral estimates from Section \ref{subsec:integral.estimates} to bound the diagonal parts and the off-diagonal parts of the quadratic forms corresponding to $A_-R_m A_+$ for $m\ge 1$. In Proposition \ref{prop:res.quad.form}, we combine these results with Proposition~\ref{prop:hc.form} to compare $A_-R_m A_+$ with the diagonal operators $T_m$ for $m\ge 1$. This will allow us to establish the recursive bounds for $\Lambda_m$ in Section \ref{subsec:recur.operator.bounds}.

The next lemma gives useful properties of the sequences $(z_{m}(n))_{m\ge 1}$, $(\eps_m)_{m\ge 1}$ and $(c_m)_{m\ge 1}$ defined by \eqref{eq:def.zmn}, \eqref{eq:def.eps.m}
and \eqref{eq:def.cm} respectively. 

\begin{lemma}
\label{lem:scale.identities.losses}
There exists $C_\eps<\infty$ such that, for every $m,n\ge1$,
\begin{align}
\label{eq:scale.losses}
&\frac{n}{\sqrt{z_{2m}(n)}} + \frac{n}{\sqrt{z_{2m+1}(n)}} +\frac{m(1+\log z_{2m+1}(n))}{z_{2m+1}(n)}+\frac{m(1+\log z_{2m}(n))}{z_{2m}(n)}
\le C_\eps K_2^{-1/2}\varepsilon_m.
\end{align}
Let $(c_m)_{m\ge1}$ be the sequence defined by
\eqref{eq:def.cm}. Then there
exist constants $0<c_-<c_+<\infty$ such that for each $m\ge 1$, we have
\begin{align}
\label{eq:coeff.uniform.bounds}
c_-\le c_m\le c_+,\\
\label{eq:odd.coeff.err.condition}
c_{2m-1}K_{\rm err}\le\frac12.
\end{align}
For sufficiently large $K_2$, we have
\begin{align}
\label{eq:scale.condition}
\frac{c_{2m-1}}{K_1\sqrt{z_{2m}(n)}}
\le\frac12, \quad
 c_{2m}K_1\sqrt{z_{2m+1}(n)}
\ge
2\left(
1+c_{2m}K_{\rm err}\sqrt{z_{2m+1}(n)}
\right).
\end{align}
\end{lemma}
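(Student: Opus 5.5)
The proof is elementary and splits into three parts: the scale losses \eqref{eq:scale.losses}, the uniform bounds on $(c_m)$, and the conditions \eqref{eq:scale.condition}.

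\emph{Scale losses.} Since $z_{2m}(n),z_{2m+1}(n)\ge K_2(n+m)^{2(2+\eps)}$, we have
$$\frac{n}{\sqrt{z_{2m}(n)}}\le K_2^{-1/2}\,n\,(n+m)^{-2-\eps}\le K_2^{-1/2}(n+m)^{-1-\eps},$$
and $(n+m)^{-1-\eps}\le 2^{1+\eps}(m+1)^{-1-\eps}=2^{1+\eps}\varepsilon_m$ because $n+m\ge (m+1)/1$ for $n\ge1$. For the logarithmic terms, write $z=z_{2m+1}(n)$. Since $u\mapsto(1+\log u)/u$ is decreasing for $u\ge1$, we may use $z\ge K_2(m+1)^{4+2\eps}$, which gives
$$\frac{m(1+\log z)}{z}\le \frac{m\big(1+\log K_2+(4+2\eps)\log(m+1)\big)}{K_2(m+1)^{4+2\eps}}.$$
We then bound $1+\log K_2\le C K_2^{1/2}$ and $m\log(m+1)(m+1)^{-4-2\eps}\le C_\eps(m+1)^{-1-\eps}$. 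This yields $C_\eps K_2^{-1/2}\varepsilon_m$ for $K_2\ge1$. The even index is handled identically.

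\emph{Coefficients.} From \eqref{eq:def.cm}, $c_{2m}c_{2m-1}=\frac{2}{\pi}(1+\varepsilon_m)$ and $c_{2m+1}c_{2m}=\frac{2}{\pi}(1-\varepsilon_m)$. Dividing gives
$$c_{2m+1}=c_{2m-1}\,\frac{1-\varepsilon_m}{1+\varepsilon_m},$$
so
$$c_{2m+1}=c_1\prod_{k=1}^m\frac{1-\varepsilon_k}{1+\varepsilon_k}.$$
Because $\varepsilon_k\le 2^{-1-\eps}<1/2$ and $\sum_k\varepsilon_k<\infty$, this product is decreasing and bounded below by a positive constant $\prod_{k\ge1}\frac{1-\varepsilon_k}{1+\varepsilon_k}>0$. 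Hence the odd coefficients lie in $[c_1 c_*, c_1]$ for some $c_*>0$. The even coefficients $c_{2m}=\frac{2(1+\varepsilon_m)}{\pi c_{2m-1}}$ are then bounded above and below as well, which gives \eqref{eq:coeff.uniform.bounds}. Moreover, $c_{2m-1}\le c_1=1/(2K_{\rm err})$ gives \eqref{eq:odd.coeff.err.condition}.

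\emph{Conditions \eqref{eq:scale.condition}.} The first condition holds once $\sqrt{z_{2m}(n)}\ge\sqrt{K_2}\ge 2c_+/K_1$. For the second, the condition is equivalent to
$$c_{2m}\sqrt{z_{2m+1}(n)}\,(K_1-2K_{\rm err})\ge 2.$$
Since $K_1\ge4K_{\rm err}$, we have $K_1-2K_{\rm err}\ge K_1/2$, so it suffices that $c_-K_1\sqrt{K_2}\ge4$, which holds for $K_2$ large.

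The only mildly delicate point is the log term, specifically checking that the monotonicity of $(1+\log u)/u$ lets us replace $z$ by its lower bound uniformly in $n$. Everything else is direct.
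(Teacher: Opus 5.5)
Your proof is correct and follows essentially the same route as the paper: the same bound $n/\sqrt{z_{2m}(n)}\le K_2^{-1/2}(n+m)^{-1-\eps}\le K_2^{-1/2}\varepsilon_m$, the same identity $c_{2m+1}=c_{2m-1}\frac{1-\varepsilon_m}{1+\varepsilon_m}$ combined with the convergent product, and the same reduction of the second condition in \eqref{eq:scale.condition} to $c_{2m}(K_1-2K_{\rm err})\sqrt{z_{2m+1}(n)}\ge 2$. The only difference is cosmetic. For the logarithmic terms you use that $u\mapsto(1+\log u)/u$ is decreasing on $[1,\infty)$ to replace $z$ by $K_2(m+1)^{4+2\eps}$, whereas the paper bounds $\log x\le C_\eps x^{(1+\eps)/(4+2\eps)}$ directly; both give a constant $C_\eps$ independent of $K_2$.
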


\begin{proof}
We now prove \eqref{eq:scale.losses}. First,
$$
\frac{n}{\sqrt{z_{2m}(n)}}
=
K_2^{-1/2}
\frac{n}{(n+2m)^{2+\eps}}.
$$
If $n\le2m$, then
$$
\frac{n}{(n+2m)^{2+\eps}}
\le
\frac{2m}{(2m)^{2+\eps}}
\le
C_\eps(m+1)^{-1-\eps}.
$$
If $n>2m$, then
$$
\frac{n}{(n+2m)^{2+\eps}}
\le
n^{-1-\eps}
\le
C_\eps(m+1)^{-1-\eps}.
$$
Thus
$$
\frac{n}{\sqrt{z_{2m}(n)}}
\le
C_\eps K_2^{-1/2}\varepsilon_m.
$$
The same argument gives
$$
\frac{n}{\sqrt{z_{2m+1}(n)}}
\le
C_\eps K_2^{-1/2}\varepsilon_m.
$$
It remains to bound the logarithmic terms. Since $K_2>1$ and
$\log x\le C_\eps x^{\frac{1+\eps}{4+2\eps}}$ for $x\ge1$, we have
$$
1+\log z_{2m}(n)\le
C_\eps K_2^{1/2}(n+2m)^{1+\eps}.
$$
Therefore
$$
\frac{m(1+\log z_{2m}(n))}{z_{2m}(n)}
\le
C_\eps K_2^{-1/2}
\frac{m}{(n+2m)^{3+\eps}}
\le
C_\eps K_2^{-1/2}(m+1)^{-1-\eps}.
$$
Similarly,
$$
\frac{m(1+\log z_{2m+1}(n))}{z_{2m+1}(n)}
\le
C_\eps K_2^{-1/2}
\frac{m}{(n+2m+1)^{3+\eps}}
\le
C_\eps K_2^{-1/2}\varepsilon_m.
$$
Combining the preceding estimates proves \eqref{eq:scale.losses}.

We next prove \eqref{eq:coeff.uniform.bounds}-\eqref{eq:odd.coeff.err.condition}. Combining the two identities in \eqref{eq:def.cm}, for $m\ge1$,
we obtain
\begin{align}
\label{eq:odd.coeff.recursion}
c_{2m+1}
&=
\frac{2 }{\pi c_{2m}}(1-\varepsilon_m)
=
\frac{2}
{\pi
\frac{2}{\pi c_{2m-1}}(1+\varepsilon_m)
}
(1-\varepsilon_m)
=
c_{2m-1}
\frac{1-\varepsilon_m}{1+\varepsilon_m}.
\end{align}
Thus
\begin{equation}
\label{eq:odd.coeff.product}
c_{2m+1}
=
c_1
\prod_{r=1}^{m}
\frac{1-\varepsilon_r}{1+\varepsilon_r}\quad\text{for all } m\ge1.
\end{equation}
Since $0<\varepsilon_r<1$, each factor in
\eqref{eq:odd.coeff.product} belongs to $(0,1)$. Hence for each $m\ge 0$, we have
$
0<c_{2m+1}\le c_1=\frac{1}{2K_{\rm err}},
$
which proves \eqref{eq:odd.coeff.err.condition}.

Since $\sum_{r=1}^{\infty}\varepsilon_r<\infty$, we have
$\Pi_\varepsilon
:=
\prod_{r=1}^{\infty}
\frac{1-\varepsilon_r}{1+\varepsilon_r}>0$. Therefore
\begin{equation}
\label{eq:odd.coeff.lower}
c_1\Pi_\varepsilon
\le
c_{2m+1}
\le
c_1\quad\text{for all } m\ge0.
\end{equation}
For the even subsequence, using \eqref{eq:odd.coeff.lower} and
$0<\varepsilon_m<1$, we get
\begin{equation}
\label{eq:even.coeff.bounds}
\frac{2 }{\pi c_1}
\le
c_{2m}
=
\frac{2 }{\pi c_{2m-1}}(1+\varepsilon_m)
\le
\frac{4}{\pi c_1\Pi_\varepsilon}
\quad\text{for each }  m\ge1.
\end{equation}
Combining \eqref{eq:odd.coeff.lower} and
\eqref{eq:even.coeff.bounds}, we obtain \eqref{eq:coeff.uniform.bounds}. For
example, one may take
$$
c_-:=
\min\left\{
c_1\Pi_\varepsilon,
\frac{2}{\pi c_1}
\right\},
\quad
c_+:=
\max\left\{
c_1,
\frac{4}{\pi c_1\Pi_\varepsilon}
\right\}.
$$

It remains to verify that $K_2$ can be chosen so that \eqref{eq:scale.condition} holds. For the first condition in \eqref{eq:scale.condition}, using that $z_{2m}(n)\ge K_2 3^{2(2+\eps)}$ for $m,n\ge1$ and the fact from \eqref{eq:odd.coeff.err.condition} that  $c_{2m-1}\le c_1$ for each $m\ge1$, it is enough to choose  $K_2$ sufficiently large such that $\frac{c_1}{K_1\sqrt{K_2}\,3^{2+\eps}} \le \frac12$.  For the second condition in \eqref{eq:scale.condition}, we need $c_{2m}(K_1-2K_{\rm err})\sqrt{z_{2m+1}(n)}\ge2$. Using that $K_1\ge4K_{\rm err}$, $c_{2m}\ge c_-$ and that $z_{2m+1}(n)\ge K_2 4^{2(2+\eps)}$ for $m,n\ge1$, it is enough to choose $K_2$ sufficiently large such that $\frac12 c_-K_1\sqrt{K_2}\,4^{2+\eps} \ge 2$. 
\end{proof}

\begin{proposition}
\label{prop:diag.estimates}
 There exists a constant $C<\infty$ such that, for every $m,n\ge1$ and every $f\in\ell^2_0(U,3,n)$,
\begin{align}
\label{eq:Vdiag.even}
V_{\rm Diag}(R_{2m-1},f) &\le \frac{2}{\pi c_{2m-1}} \left[ 1+ C\left( z_{2m}(n)^{-1/2} + \frac{m(1+\log z_{2m}(n))} {z_{2m}(n)} \right) \right] \nonumber\\
&\quad\times \left\langle f, K_1\sqrt{z_{2m}(n)} \mathfrak u_{m-1}(\lambda,\cdot,z_{2m}(n))f \right\rangle + \frac{C}{c_{2m-1}} \langle f,(-S)f\rangle,
\\
\label{eq:Vdiag.odd}
V_{\rm Diag}(R_{2m},f) &\ge \frac{2}{\pi c_{2m}} \left[ 1- C\left( z_{2m+1}(n)^{-1/2} + \frac{m(1+\log z_{2m+1}(n))} {z_{2m+1}(n)} \right) \right]
\nonumber\\
&\quad\times \Big\langle f, \frac{1}{K_1\sqrt{z_{2m+1}(n)}} \mathfrak l_m(\lambda,\cdot,z_{2m+1}(n))f \Big\rangle - \frac{C}{c_{2m}} \langle f,(-S)f\rangle.
\end{align}
\end{proposition}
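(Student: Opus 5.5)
The plan is to apply Lemma~\ref{lem:V.diag} with $R=R_{2m-1}$ (resp.\ $R_{2m}$) acting on degree $n+1$. The candidate comparison multiplier $\widetilde{\mathfrak r}$ on degree $n$ is taken to be the right-hand side of \eqref{eq:Vdiag.even} (resp.\ \eqref{eq:Vdiag.odd}). With this choice, it suffices to verify the pointwise criterion \eqref{eq:diag.criterion} for every $j\in[3]^n$ and $p\in\mathbb T_j^3$.

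By \eqref{eq:Rm.multiplier}, the multiplier of $R_{2m-1}$ on degree $n+1$ at $(p,q)$ is
\[
\bigl(\lambda+\theta(p,q)+c_{2m-1}\mathfrak s^{(2m-1)}_{n+1}\bigr)^{-1}.
\]
The left side of \eqref{eq:diag.criterion} is therefore
\[
(2\pi)^{-2}\sum_{k=1}^3\sin^2\Bigl(\sum_\ell p_\ell^k\Bigr)\int_{\mathbb T_k^3}\frac{\rmd q}{\lambda+\theta(p,q)+c_{2m-1}\mathfrak s^{(2m-1)}_{n+1}(\lambda,(p,q))}.
\]
The first task is to reduce each $k$-integral to the form treated in Lemma~\ref{lem:diagonal.integrals}, by factoring out $c_{2m-1}$ and adjusting the parameter $z$.

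\emph{Odd case, giving \eqref{eq:Vdiag.even}.} Here $c_{2m-1}\mathfrak s^{(2m-1)}_{n+1}$ equals $\frac{c_{2m-1}}{K_1\sqrt{z}}\mathfrak l_{m-1}(\lambda,(p,q),z)-c_{2m-1}K_{\rm err}\theta$, with $z=z_{2m-1}(n+1)$. Since $z_{2m-1}(n+1)=z_{2m}(n)$ by \eqref{eq:def.zmn}, the parameter matches the $z$ in the statement. Using \eqref{eq:odd.coeff.err.condition}, the denominator is at least
\[
\tfrac12\bigl(\lambda+\theta+\mathfrak l_{m-1}\cdot 2c_{2m-1}/(K_1\sqrt z)\bigr),
\]
which does not directly match the lemma, where the coefficient of $\mathfrak l$ is $1$. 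I would instead factor $\mu:=c_{2m-1}/(K_1\sqrt z)$, so that
\[
\lambda+(1-c_{2m-1}K_{\rm err})\theta+\mu\mathfrak l\ \ge\ \mu\,(\lambda'+\theta'+\mathfrak l),
\]
after rescaling. The cleaner route is to note $\mathfrak l\ge\frac12\sin^2$-weighted $\theta$-like terms, and to handle the $(\lambda+\theta)$ part by monotonicity, since $1/(A+\mu B)\le \mu^{-1}/(\mu^{-1}A'+B)$. Concretely: first bound the integral by
\[
\frac{1}{\mu}\int\frac{\rmd q}{\lambda+\theta(p,q)+\mathfrak l_{m-1}(\lambda,(p,q),z)},
\]
which is valid because $\mu\le 1/2$ by \eqref{eq:scale.condition} and $(1-c_{2m-1}K_{\rm err})\ge 1/2\ge\mu$. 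Then apply \eqref{eq:diagonal.upper.integral}, which gives
\[
\frac{1}{\mu}\Bigl(4\pi[1+C(\cdot)]\,{\rm UB}_{m-1}(\lambda+\theta_k(p),z)+C\Bigr).
\]
Multiplying by $(2\pi)^{-2}\sin^2(\sum_\ell p_\ell^k)$ and summing over $k$, the main terms reproduce
\[
\frac{2}{\pi c_{2m-1}}\bigl[1+C(\cdot)\bigr]K_1\sqrt z\,\mathfrak u_{m-1}(\lambda,p,z),
\]
since $4\pi/(2\pi)^2\cdot 2=2/\pi$, accounting for the $\frac12$ in $\mathfrak u$.

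The additive constant contributes $\frac{C K_1\sqrt z}{c_{2m-1}}\sum_k\sin^2(\cdot)$, which is a multiple of $\theta(p)$ but carries an unwanted factor $\sqrt z$. This is the main obstacle. To fix it, I would not divide the whole integral by $\mu$. Instead, I would split off the region $\{\rho\ge\rho_0\}$ and the constant pieces of the proof of Lemma~\ref{lem:diagonal.integrals}, where the denominator is at least $\frac12(\lambda+\theta)\ge c$. There the bound $C$ holds without the $1/\mu$ factor, and it yields $\frac{C}{c_{2m-1}}\theta(p)\le\frac{C}{c_{2m-1}}\langle f,(-S)f\rangle$ via Lemma~\ref{lem:Fourier}, using $c_{2m-1}\le c_1$. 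Only the singular radial part gets the $1/\mu$ scaling, applied with $a$ replaced by $\lambda+\theta_k(p)$ as in that proof. Alternatively, one can use the bound ${\rm UB}_{m-1}\ge\sqrt z$ to absorb $C$ into the multiplicative error $C z^{-1/2}$ times the main term; this is cleaner and is what I would try first.

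\emph{Even case, giving \eqref{eq:Vdiag.odd}.} The same scheme applies with the reverse inequality. The denominator is
\[
\lambda+\theta+c_{2m}K_1\sqrt z\,\mathfrak u_{m-1}+c_{2m}K_{\rm err}\sqrt z\,\theta,
\]
with $z=z_{2m}(n+1)=z_{2m+1}(n)$. Setting $\nu:=c_{2m}K_1\sqrt z$, the second condition in \eqref{eq:scale.condition} gives that this is at most $\nu(\lambda+\theta+\mathfrak u_{m-1})$. Hence the integral is at least $\nu^{-1}$ times the integral in \eqref{eq:diagonal.lower.integral}, which is at least
\[
\nu^{-1}\bigl(4\pi[1-C(\cdot)]\,{\rm LB}_m(\lambda+\theta_k(p),z)-C{\rm LB}_m(1,z)-C\bigr).
\]
Multiplying by $(2\pi)^{-2}\sin^2$ and summing gives the main term
\[
\frac{2}{\pi c_{2m}}\bigl[1-C(\cdot)\bigr]\frac{1}{K_1\sqrt z}\,\mathfrak l_m(\lambda,p,z).
\]
The negative remainders are bounded by $\frac{C{\rm LB}_m(1,z)}{c_{2m}K_1\sqrt z}\theta(p)$. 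Since ${\rm LB}_m(1,z)\le\sqrt{L(1,z)}\le C\sqrt z$, this is at most $\frac{C}{c_{2m}}\theta(p)$, which is then converted to $\frac{C}{c_{2m}}\langle f,(-S)f\rangle$ by Parseval and Lemma~\ref{lem:Fourier}. Finally, Lemma~\ref{lem:V.diag} (in its reversed form for this case) converts both pointwise bounds into the stated quadratic-form inequalities.
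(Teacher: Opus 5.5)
Your proposal is correct and follows the paper's proof essentially step for step. You use the same pointwise comparisons $r_{2m-1}^{(n+1)}\le \frac{K_1\sqrt z}{c_{2m-1}}\bigl(\lambda+\theta+\mathfrak l_{m-1}\bigr)^{-1}$ and $r_{2m}^{(n+1)}\ge \frac{1}{c_{2m}K_1\sqrt z}\bigl(\lambda+\theta+\mathfrak u_{m-1}\bigr)^{-1}$ from \eqref{eq:odd.coeff.err.condition} and \eqref{eq:scale.condition}, then Lemma~\ref{lem:diagonal.integrals} with $m-1$ in place of $m$ and Lemma~\ref{lem:V.diag}. In the upper bound you absorb the additive constant into the $z^{-1/2}$ error via ${\rm UB}_{m-1}\ge\sqrt z$, which is your preferred option and exactly what the paper does. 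In the lower bound, ${\rm LB}_m(1,z)\le C\sqrt z$ turns the remainder into $\frac{C}{c_{2m}}\langle f,(-S)f\rangle$.

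One small point: for $m=1$ the multiplier is $\mathfrak s^{(1)}_{n+1}=0$, not the formula you display. The same comparison still holds directly, since $\mathfrak l_0\le\theta$ and $c_1/(K_1\sqrt z)\le\frac12$; the paper treats this case separately in one line.
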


\begin{proof}
For $k\in[3]$, $j\in[3]^n$, and
$p=(p_1,\cdots,p_n)\in\mathbb T_j^3$, let
$\Sigma_k(p):=\sin^2(\sum_{\ell=1}^n p_\ell^k)$. We first prove \eqref{eq:Vdiag.even}. Note that $z_{2m-1}(n+1)=z_{2m}(n)$.
Set
$
z:=z_{2m}(n).
$
We claim that, for every $m,n\ge1$,
\begin{equation}
\label{eq:diag.form.odd.res.upper}
r_{2m-1}^{(n+1)}(\lambda,(p,q))
\le
\frac{K_1\sqrt z}{c_{2m-1}}
\frac{1}
{
\lambda+\theta(p,q)
+
\mathfrak l_{m-1}(\lambda,(p,q),z)
}.
\end{equation}
For $m=1$, we have
$T_1=0$ and thus
$$
r_1^{(n+1)}(\lambda,(p,q))
=
\frac{1}{\lambda+\theta(p,q)}.
$$
By \eqref{eq:theta.sum.Theta}, we note that
$\mathfrak l_0(\lambda,(p,q),z)
=
\frac12\sum_{h=1}^3\sin^2(\sum_{\ell=1}^n p_{\ell}^h+q^h)
\le
\theta(p,q)$. By \eqref{eq:scale.condition}, we have
$c_1/(K_1\sqrt z)\le1/2$. Hence
$$
\frac{c_1}{K_1\sqrt z}
\left[
\lambda+\theta(p,q)+\mathfrak l_0(\lambda,(p,q),z)
\right]
\le
\lambda+\theta(p,q).
$$
This proves \eqref{eq:diag.form.odd.res.upper} for $m=1$. Assume now that $m\ge2$. By the definition of $R_{2m-1}$ and
\eqref{eq:def.s.odd}, its Fourier multiplier is
$$
r_{2m-1}^{(n+1)}(\lambda,(p,q))
=
\frac{1}
{
\lambda+\theta(p,q)
+
c_{2m-1}
\left[
\frac{1}{K_1\sqrt z}
\mathfrak l_{m-1}(\lambda,(p,q),z)
-
K_{\rm err}\theta(p,q)
\right]
}.
$$
Since $c_{2m-1}K_{\rm err}\le\frac12$, the denominator is bounded
below by
$$
\lambda+\frac12\theta(p,q)
+
\frac{c_{2m-1}}{K_1\sqrt z}
\mathfrak l_{m-1}(\lambda,(p,q),z).
$$
By \eqref{eq:scale.condition},
$c_{2m-1}/(K_1\sqrt z)\le1/2$. Hence
$$
\lambda+\frac12\theta(p,q)
+
\frac{c_{2m-1}}{K_1\sqrt z}
\mathfrak l_{m-1}(\lambda,(p,q),z)
\ge
\frac{c_{2m-1}}{K_1\sqrt z}
\left[
\lambda+\theta(p,q)
+
\mathfrak l_{m-1}(\lambda,(p,q),z)
\right].
$$
This proves \eqref{eq:diag.form.odd.res.upper} for all
$m\ge1$.

By Lemma~\ref{lem:V.diag},
\eqref{eq:diag.form.odd.res.upper}, and
the upper bound in Lemma~\ref{lem:diagonal.integrals}, applied with $m$ replaced by
$m-1$, we obtain
\begin{align*}
V_{\rm Diag}(R_{2m-1},f)
&\le
\frac{2}{\pi c_{2m-1}}
\left[
1+
C\left(
z^{-1/2}
+
\frac{m(1+\log z)}{z}
\right)
\right]
\left\langle
f,
K_1\sqrt z
\mathfrak u_{m-1}(\lambda,\cdot,z)f
\right\rangle
\nonumber\\
&\quad+
\frac{CK_1\sqrt z}{c_{2m-1}}
\sum_{k=1}^3
\langle f,\Sigma_k f\rangle.
\end{align*}
By Lemma~\ref{lem:ULB}.i, since $L(x,z)\ge z$, we have
${\rm UB}_{m-1}(x,z)\ge\sqrt z$, and hence
$
\mathfrak u_{m-1}(\lambda,p,z)
\ge
\frac{\sqrt z}{2}\sum_{k=1}^3\Sigma_k(p).
$
Consequently,
$$
\frac{CK_1\sqrt z}{c_{2m-1}}
\sum_{k=1}^3
\langle f,\Sigma_k f\rangle
\le
\frac{C}{c_{2m-1}\sqrt z}
\left\langle
f,
K_1\sqrt z
\mathfrak u_{m-1}(\lambda,\cdot,z)f
\right\rangle.
$$
After increasing $C$, this term is absorbed into the $z^{-1/2}$ term.
Since $z=z_{2m}(n)$, adding the non-negative term
$
C c_{2m-1}^{-1}\langle f,(-S)f\rangle
$
proves \eqref{eq:Vdiag.even}.

We now prove \eqref{eq:Vdiag.odd}. Note that $z_{2m}(n+1)=z_{2m+1}(n)$. Set
$
z:=z_{2m+1}(n).
$
By \eqref{eq:def.s.even}, the multiplier of $R_{2m}$ is
$$
r_{2m}^{(n+1)}(\lambda,(p,q))
=
\frac{1}
{
\lambda+\theta(p,q)
+
c_{2m}K_1\sqrt z\,
\mathfrak u_{m-1}(\lambda,(p,q),z)
+
c_{2m}K_{\rm err}\sqrt z\,\theta(p,q)
}.
$$
By \eqref{eq:scale.condition},
$
c_{2m}K_1\sqrt z
\ge
2(1+c_{2m}K_{\rm err}\sqrt z),
$
and hence
$
c_{2m}K_1\sqrt z\ge1+c_{2m}K_{\rm err}\sqrt z
$
and
$
c_{2m}K_1\sqrt z\ge1.
$
Thus
$$
\lambda+\theta(p,q)
+
c_{2m}K_1\sqrt z\,
\mathfrak u_{m-1}(\lambda,(p,q),z)
+
c_{2m}K_{\rm err}\sqrt z\,\theta(p,q)
\le
c_{2m}K_1\sqrt z
\left[
\lambda+\theta(p,q)
+
\mathfrak u_{m-1}(\lambda,(p,q),z)
\right].
$$
Consequently,
\begin{equation}
\label{eq:diag.form.even.res.lower}
r_{2m}^{(n+1)}(\lambda,(p,q))
\ge
\frac{1}{c_{2m}K_1\sqrt z}
\frac{1}
{
\lambda+\theta(p,q)
+
\mathfrak u_{m-1}(\lambda,(p,q),z)
}.
\end{equation}
By the lower-bound part of Lemma~\ref{lem:V.diag}, \eqref{eq:diag.form.even.res.lower}, and
the lower bound in Lemma~\ref{lem:diagonal.integrals}, applied with $m$ replaced by
$m-1$, we obtain
\begin{align*}
V_{\rm Diag}(R_{2m},f)
&\ge
\frac{2}{\pi c_{2m}}
\left[
1-
C\left(
z^{-1/2}
+
\frac{m(1+\log z)}{z}
\right)
\right]
\left\langle
f,
\frac{1}{K_1\sqrt z}
\mathfrak l_m(\lambda,\cdot,z)f
\right\rangle
\nonumber\\
&\quad-
\frac{C[{\rm LB}_m(1,z)+1]}
{c_{2m}K_1\sqrt z}
\sum_{k=1}^3
\langle f,\Sigma_k f\rangle.
\end{align*}
By Lemma~\ref{lem:ULB}.i,
$
{\rm LB}_m(1,z)
\le
\sqrt{L(1,z)}
\le
C\sqrt z.
$
Since $z>1$ and
$
\frac12\sum_{k=1}^3\Sigma_k\le\theta,
$
we have
$$
\frac{C[{\rm LB}_m(1,z)+1]}
{c_{2m}K_1\sqrt z}
\sum_{k=1}^3
\langle f,\Sigma_k f\rangle
\le
\frac{C}{c_{2m}}
\langle f,(-S)f\rangle.
$$
Since $z=z_{2m+1}(n)$, this proves
\eqref{eq:Vdiag.odd}. The proof is complete.
\end{proof}

\begin{proposition}
\label{lem:offdiag.estimates}
There exists a constant $C<\infty$, such that, for every $m,n\ge1$ and every
$f\in\ell^2_0(U,3,n)$,
\begin{align}
\label{eq:offdiag.form.odd.res}
\left|V_{\rm Off}(R_{2m-1},f)\right|
&\le
\frac{C n}{c_{2m-1}}
\left\langle
f,\mathfrak u_{m-1}(\lambda,\cdot,z_{2m}(n))f
\right\rangle,
\\
\label{eq:offdiag.form.even.res}
\left|V_{\rm Off}(R_{2m},f)\right|
&\le
\frac{C n}
{K_1 c_{2m}z_{2m+1}(n)}
\left[
\left\langle
f,\mathfrak l_m(\lambda,\cdot,z_{2m+1}(n))f
\right\rangle
+
\left\langle f,(-S)f\right\rangle
\right].
\end{align}
\end{proposition}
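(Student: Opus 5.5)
The plan is to start from the explicit formula \eqref{eq:VOff} for $V_{\rm Off}(R_m,f)$ and bound it by absolute values. The multiplier $r_m^{(n+1)}(\lambda,(p_{1:n+1}))$ depends only on total momentum. Write $q:=p_{n+1}\in\mathbb T^3_{j_{n+1}}$ and set $k:=j_{n+1}$. After relabelling (swap the roles of positions $n$ and $n+1$ using symmetry of $f$ and of $r_m$), the integrand becomes
\[
r_m^{(n+1)}(\lambda,(p,q))\,|\widehat f_j(p)|\,|\widehat f_{j_{1:n-1},k}(p_{1:n-1},q)|\,\Big|\sin\Big(\sum_{\ell=1}^n p_\ell^k\Big)\Big|\,\Big|\sin\Big(\sum_{\ell=1}^{n-1}p_\ell^{j_n}+q^{j_n}\Big)\Big|,
\]
summed over $j\in[3]^n$ and $k\in[3]$. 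Together with the prefactor $n(2\pi)^{-(n+1)\cdot 2}$, this is exactly the left-hand side structure of Lemma~\ref{lem:schur.offdiag}, multiplied by $n$. So the entire proof reduces to bounding $r_m^{(n+1)}$ pointwise by the denominators appearing in that lemma, with $z=z_m(n+1)$, and then applying Parseval \eqref{eq:parseval.parameterized} to convert Fourier integrals into quadratic forms.

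For the odd case \eqref{eq:offdiag.form.odd.res}, take $z:=z_{2m-1}(n+1)=z_{2m}(n)$. From the definition \eqref{eq:def.s.odd} together with $c_{2m-1}K_{\rm err}\le\frac12$ (see \eqref{eq:odd.coeff.err.condition}), the denominator of $r_{2m-1}^{(n+1)}$ is at least
\[
\lambda+\tfrac12\theta+\frac{c_{2m-1}}{K_1\sqrt z}\mathfrak l_{m-1}\ \ge\ \min\{\tfrac12,c_{2m-1}\}\bigl(\lambda+\theta+K_1^{-1}z^{-1/2}\mathfrak l_{m-1}\bigr).
\]
This gives
\[
r_{2m-1}^{(n+1)}\le \frac{C}{c_{2m-1}}\bigl(\lambda+\theta+K_1^{-1}z^{-1/2}\mathfrak l_{m-1}\bigr)^{-1},
\]
using $c_{2m-1}\le c_1$ bounded. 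For $m=1$, $\mathfrak l_0$ is controlled trivially since $r_1=(\lambda+\theta)^{-1}$ and $\mathfrak l_0\le\theta$. Then \eqref{eq:two.schur.lower} yields the bound $\frac{Cn}{c_{2m-1}}\langle f,\mathfrak u_{m-1}(\lambda,\cdot,z_{2m}(n))f\rangle$.

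For the even case \eqref{eq:offdiag.form.even.res}, take $z:=z_{2m}(n+1)=z_{2m+1}(n)$. By \eqref{eq:def.s.even}, $r_{2m}^{(n+1)}\le(\lambda+\theta+K_1c_{2m}\sqrt z\,\mathfrak u_{m-1})^{-1}$ after dropping the nonnegative $K_{\rm err}$ term. Applying \eqref{eq:two.schur.upper} then gives $\frac{Cn}{K_1c_{2m}z}\sum_j\int\theta|\widehat f_j|^2$, which by Lemma~\ref{lem:Fourier} and Parseval is $\frac{Cn}{K_1c_{2m}z}\langle f,(-S)f\rangle$. Adding the nonnegative $\mathfrak l_m$ term gives the stated form.

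The main point needing care is the index bookkeeping. We must check that the relabelled integral in \eqref{eq:VOff} matches exactly the variables $(p,q)$ of Lemma~\ref{lem:schur.offdiag}, including which sine carries $j_n$ versus $k$. We must also track the normalizing constants and the factor $n$. The analytic content has already been done in Lemma~\ref{lem:schur.offdiag}.
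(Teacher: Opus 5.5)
Your proposal is correct and follows the paper's proof essentially step for step. You bound $r_{2m-1}^{(n+1)}$ and $r_{2m}^{(n+1)}$ pointwise by the denominators of Lemma~\ref{lem:schur.offdiag}, with $z=z_{2m}(n)$ and $z=z_{2m+1}(n)$ respectively, treat $m=1$ separately via $\mathfrak l_0\le\theta$, and finish with Parseval and $\widehat{Sf}_j=-\theta\widehat f_j$. The relabelling of positions $n$ and $n+1$ is not needed, since \eqref{eq:VOff} already has exactly the structure of the left-hand side of Lemma~\ref{lem:schur.offdiag} with $k=j_{n+1}$ and $q=p_{n+1}$.
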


\begin{proof}
All constants below are independent of $m,n,\lambda$, $f$. We first prove \eqref{eq:offdiag.form.odd.res}. By the symmetric
representation of the off-diagonal form,
\begin{align}
\label{eq:VOff.odd.start}
V_{\rm Off}(R_{2m-1},f)
&=
\frac{n}{(2\pi)^{2(n+1)}}
\operatorname{Re}
\sum_{j\in[3]^{n+1}}
\int_{\mathbb T_j^3}
r_{2m-1}^{(n+1)}(\lambda,p_{1:n+1})
\overline{\widehat f_{j_{1:n}}(p_{1:n})}
\widehat f_{j_{1:n-1},j_{n+1}}
(p_{1:n-1},p_{n+1})
\nonumber\\
&\quad\times
\sin\left(
\sum_{\ell=1}^{n-1}p_\ell^{j_n}+p_{n+1}^{j_n}
\right)
\sin\left(
\sum_{\ell=1}^{n}p_\ell^{j_{n+1}}
\right)
\rmd p_{1:n+1}.
\end{align}
For fixed $(j_1,\ldots,j_n)\in[3]^n$, put
$k:=j_{n+1}$, $q:=p_{n+1}$, and 
$p:=(p_1,\ldots,p_n)$.
Set $z:=z_{2m}(n)$. 

We claim that
\begin{equation}
\label{eq:R.odd.multiplier.bound.offdiag}
r_{2m-1}^{(n+1)}(\lambda,(p,q))
\le
\frac{C}{c_{2m-1}}
\frac{1}
{
\lambda+\theta(p,q)
+
K_1^{-1}z^{-1/2}
\mathfrak l_{m-1}(\lambda,(p,q),z)
}.
\end{equation}
If $m=1$, then $R_{2m-1}=R_1^{(n+1)}$ and
$T_1^{(n+1)}=0$, so
$$
r_1^{(n+1)}(\lambda,(p,q))
=
\frac{1}{\lambda+\theta(p,q)}.
$$
By \eqref{eq:theta.sum.Theta},
$\mathfrak l_0(\lambda,(p,q),z)
=
\frac12\sum_{h=1}^3\sin^2(\sum_{\ell=1}^n p_{\ell}^h+q^h)
\le
\theta(p,q)$. Since $K_1^{-1}z^{-1/2}\le1$, we have
$$
\lambda+\theta(p,q)
+
K_1^{-1}z^{-1/2}\mathfrak l_0(\lambda,(p,q),z)
\le
2(\lambda+\theta(p,q)).
$$
Also $c_1K_{\rm err}=1/2$ and $K_{\rm err}>1$ imply $c_1\le1/2$.
Hence \eqref{eq:R.odd.multiplier.bound.offdiag} follows for
$m=1$.

Assume now that $m\ge2$. By the definition of $R_{2m-1}$,
\eqref{eq:def.s.odd} and $z_{2m-1}(n+1)=z_{2m}(n)$,
$$
r_{2m-1}^{(n+1)}(\lambda,(p,q))
=
\frac{1}
{
\lambda+\theta(p,q)
+
c_{2m-1}
\left[
\frac{1}{K_1\sqrt z}
\mathfrak l_{m-1}(\lambda,(p,q),z)
-
K_{\rm err}\theta(p,q)
\right]
}.
$$
Using the fact that $c_{2m-1}K_{\rm err}\le1/2$, the denominator is bounded below by
$$
\lambda+\frac12\theta(p,q)
+
\frac{c_{2m-1}}{K_1\sqrt z}
\mathfrak l_{m-1}(\lambda,(p,q),z).
$$
Since $K_{\rm err}>1$ and $c_{2m-1}K_{\rm err}\le1/2$, we have
$c_{2m-1}\le1/2$. Therefore
$$
\lambda+\frac12\theta(p,q)
+
\frac{c_{2m-1}}{K_1\sqrt z}
\mathfrak l_{m-1}(\lambda,(p,q),z)
\ge
c_{2m-1}
\left[
\lambda+\theta(p,q)
+
K_1^{-1}z^{-1/2}
\mathfrak l_{m-1}(\lambda,(p,q),z)
\right].
$$
This proves \eqref{eq:R.odd.multiplier.bound.offdiag} for $m\ge2$, and hence for every $m\ge1$.

Substituting \eqref{eq:R.odd.multiplier.bound.offdiag} into \eqref{eq:VOff.odd.start}, taking absolute values, and applying \eqref{eq:two.schur.lower} with $z=z_{2m}(n)$, we obtain
$$
\left|V_{\rm Off}(R_{2m-1},f)\right| \le \frac{Cn}{c_{2m-1}} \frac{1}{(2\pi)^{2n}} \sum_{j\in[3]^n} \int_{\mathbb T_j^3} \mathfrak u_{m-1}(\lambda,p,z_{2m}(n)) |\widehat f_j(p)|^2\rmd p.
$$
By the Parseval–Plancherel formula \eqref{eq:parseval.parameterized}, this implies \eqref{eq:offdiag.form.odd.res}.

We now prove \eqref{eq:offdiag.form.even.res}. By the definition of $V_{\rm Off}$, we have
\begin{align}
\label{eq:VOff.even.start}
V_{\rm Off}(R_{2m},f)
&=
\frac{n}{(2\pi)^{2(n+1)}}
\operatorname{Re}
\sum_{j\in[3]^{n+1}}
\int_{\mathbb T_j^3}
r_{2m}^{(n+1)}(\lambda,p_{1:n+1})
\overline{\widehat f_{j_{1:n}}(p_{1:n})}
\widehat f_{j_{1:n-1},j_{n+1}}
(p_{1:n-1},p_{n+1})
\nonumber\\
&\quad\times
\sin\left(
\sum_{\ell=1}^{n-1}p_\ell^{j_n}+p_{n+1}^{j_n}
\right)
\sin\left(
\sum_{\ell=1}^{n}p_\ell^{j_{n+1}}
\right)
\rmd p_{1:n+1}.
\end{align}
Again put $k:=j_{n+1}$, $q:=p_{n+1}$. Set $z:=z_{2m+1}(n)$. 

By the definition of $R_{2m}$, \eqref{eq:def.s.even} and $z_{2m}(n+1)=z_{2m+1}(n)$, we have
\begin{align*}
r_{2m}^{(n+1)}(\lambda,(p,q)) &= \frac{1} { \lambda+\theta(p,q) + c_{2m}K_1\sqrt z\, \mathfrak u_{m-1}(\lambda,(p,q),z) + c_{2m}K_{\rm err}\sqrt z\,\theta(p,q)}\\
 & \le \frac{1}{\lambda+\theta(p,q)+c_{2m}K_1\sqrt z\,\mathfrak u_{m-1}(\lambda,(p,q),z)}.
\end{align*}
Applying \eqref{eq:two.schur.upper} to \eqref{eq:VOff.even.start}, we obtain
$$
\left|V_{\rm Off}(R_{2m},f)\right|
\le
\frac{Cn}{K_1c_{2m}z}
\langle f,(-S)f\rangle.
$$
Using the fact that $\mathfrak l_m(\lambda,\cdot,z)\ge0$ and substituting $z=z_{2m+1}(n)$, this implies
$$
\left|V_{\rm Off}(R_{2m},f)\right|
\le
\frac{Cn}{K_1 c_{2m}z_{2m+1}(n)}
\left[
\left\langle
f,\mathfrak l_m(\lambda,\cdot,z_{2m+1}(n))f
\right\rangle
+
\left\langle f,(-S)f\right\rangle
\right].
$$
Hence 
\eqref{eq:offdiag.form.even.res} is verified.
\end{proof}

\begin{proposition}
\label{prop:res.quad.form}
There exists a sufficiently large constant $K_{\rm err}^0<\infty$ such that the following holds. For every
$K_{\rm err}\ge K_{\rm err}^0$ and every $K_1\ge4K_{\rm err}$, there
exists a sufficiently large constant $K_2^0<\infty$, depending on $K_1$ and $K_{\rm err}$ such that, for every $m,n\ge1$ and
every $f\in\ell^2_0(U,3,n)$,
\begin{align}
\label{eq:one.step.even}
\left\langle
f,
A_-R_{2m-1}A_+f
\right\rangle
&\le
\frac{2}{\pi c_{2m-1}}
(1+\varepsilon_m)
\langle f,T_{2m}f\rangle,
\\
\label{eq:one.step.odd}
\left\langle
f,
A_-R_{2m}A_+f
\right\rangle
&\ge
\frac{2}{\pi c_{2m}}
(1-\varepsilon_m)
\langle f,T_{2m+1}f\rangle.
\end{align}
\end{proposition}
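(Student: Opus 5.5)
The plan is to combine Proposition~\ref{prop:hc.form} with the diagonal and off-diagonal estimates of Propositions~\ref{prop:diag.estimates} and \ref{lem:offdiag.estimates}. All error terms are then absorbed using the scale losses of Lemma~\ref{lem:scale.identities.losses} and the $K_{\rm err}\sqrt{z}\,\theta$ (resp. $-K_{\rm err}\theta$) terms built into $T_{2m}$ (resp. $T_{2m+1}$).

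First, I apply Proposition~\ref{prop:hc.form} with $R=R_{2m-1}$ and with $R=R_{2m}$. Both multipliers depend only on total momentum, because $\theta_k$, $\mathfrak l$ and $\mathfrak u$ are functions of $\sum_\ell p_\ell$. By \eqref{eq:R.bound}, condition \eqref{eq:R.res} holds with $C_R=2$ and $\kappa=1$. This gives
\[
\bigl|\langle f,A_-R_mA_+f\rangle-V_{\rm Diag}(R_m,f)-V_{\rm Off}(R_m,f)\bigr|\le Cn\langle f,(-S)f\rangle .
\]
Taking $\kappa=1$ costs a factor $n$ in front of the Dirichlet form, and this factor is the main obstacle. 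In the odd step I would improve it by using the sharper resolvent bound available for $R_{2m-1}$. Specifically, \eqref{eq:diag.form.odd.res.upper} gives $r\le C/(\lambda+\tfrac12\theta)$, so only $\kappa\sim1$ is available and $Cn\langle f,(-S)f\rangle$ remains. I would compare this with the target $K_{\rm err}\sqrt{z_{2m}(n)}\langle f,(-S)f\rangle$, noting that $\langle f,(-S)f\rangle=\frac{1}{(2\pi)^{2n}}\sum_j\int\theta|\widehat f|^2$ by Lemma~\ref{lem:Fourier}. Since $n/\sqrt{z_{2m}(n)}\le C_\eps K_2^{-1/2}\varepsilon_m$ by \eqref{eq:scale.losses}, the error $Cn\langle f,-Sf\rangle$ is at most $\varepsilon_m\cdot\frac{2}{\pi c_{2m-1}}K_{\rm err}\sqrt{z}\langle f,-Sf\rangle$ once $K_2$ is large. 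In the even step, $R_{2m}$ satisfies \eqref{eq:R.res} with $\kappa=c_{2m}K_{\rm err}\sqrt{z}$, because the multiplier contains $c_{2m}K_{\rm err}\sqrt z\,\theta$. The hard-core error is then $\frac{Cn}{c_{2m}K_{\rm err}\sqrt z}\langle f,-Sf\rangle$.

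For the upper bound \eqref{eq:one.step.even}, I add \eqref{eq:Vdiag.even} and \eqref{eq:offdiag.form.odd.res}. The off-diagonal term $\frac{Cn}{c_{2m-1}}\langle f,\mathfrak u_{m-1}f\rangle$ equals $\frac{Cn}{K_1\sqrt z\,c_{2m-1}}\langle f,K_1\sqrt z\,\mathfrak u_{m-1}f\rangle$, and $n/\sqrt z$ is of order $K_2^{-1/2}\varepsilon_m$. The bracket in \eqref{eq:Vdiag.even} is also $1+CC_\eps K_2^{-1/2}\varepsilon_m$. For $K_2$ large, all of these are bounded by $\tfrac12\varepsilon_m$ times the main term. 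The remaining terms $\frac{C}{c_{2m-1}}\langle f,-Sf\rangle$ and $Cn\langle f,-Sf\rangle$ are dominated by $\frac{2}{\pi c_{2m-1}}K_{\rm err}\sqrt{z}\langle f,-Sf\rangle$ once $K_{\rm err}\ge K^0_{\rm err}$ and $K_2$ is large. Here I use $\sqrt z\ge1$, $c_{2m-1}\le c_+$, and the fact that $c_{2m-1}n$ is at most $c_+ n\le \varepsilon_m$-small relative to $\sqrt z$. This yields \eqref{eq:one.step.even}.

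For the lower bound \eqref{eq:one.step.odd}, I use \eqref{eq:Vdiag.odd} minus $|V_{\rm Off}|$ from \eqref{eq:offdiag.form.even.res} minus the hard-core error. The main term is $\frac{2}{\pi c_{2m}}(1-C K_2^{-1/2}\varepsilon_m)\langle f,\frac{1}{K_1\sqrt z}\mathfrak l_mf\rangle$. The off-diagonal $\mathfrak l_m$ part has a relative size $\frac{Cn}{\sqrt z}\cdot\frac{\pi}{2}\le CK_2^{-1/2}\varepsilon_m$, and I absorb it into the factor $(1-\varepsilon_m)$. All $\langle f,-Sf\rangle$ errors are at most $\frac{C}{c_{2m}}\langle f,-Sf\rangle$, using $n/z\le1$ and $n/\sqrt z\le1$. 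I bound these by $\frac{2}{\pi c_{2m}}(1-\varepsilon_m)K_{\rm err}\langle f,-Sf\rangle$, which needs $K_{\rm err}\ge K_{\rm err}^0\approx \pi C$; note that $\varepsilon_m\le 2^{-1-\eps}$. Subtracting this gives exactly $\frac{2}{\pi c_{2m}}(1-\varepsilon_m)\langle f,T_{2m+1}f\rangle$.

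The constants must be chosen in order. First, $K^0_{\rm err}$ depends only on the absolute $C$ from the cited estimates. Next, $K_1\ge4K_{\rm err}$ is given. Finally, $K_2^0$ is taken large depending on $K_1$, $K_{\rm err}$ and $c_\pm$.
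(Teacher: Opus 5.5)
Your proposal is correct and follows the paper's proof essentially step by step. You apply Proposition~\ref{prop:hc.form} with $\kappa=1$ for $R_{2m-1}$, and absorb the resulting $Cn\langle f,(-S)f\rangle$ into the $K_{\rm err}\sqrt{z_{2m}(n)}\,\theta$ part of $T_{2m}$ using $n/\sqrt{z}\lesssim K_2^{-1/2}\varepsilon_m$. For $R_{2m}$ you take $\kappa\asymp c_{2m}K_{\rm err}\sqrt{z_{2m+1}(n)}$, then combine Propositions~\ref{prop:diag.estimates} and \ref{lem:offdiag.estimates}; $K_{\rm err}^0$ is needed exactly to absorb the $O(c_{2m}^{-1})\langle f,(-S)f\rangle$ errors in the lower bound, which carry no $\sqrt{z}$ factor. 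The paper uses $\kappa=1+c_{2m}K_{\rm err}\sqrt{z}$ so that $\kappa\ge1$ is automatic; your choice also satisfies it, since $c_{2m}\ge 4K_{\rm err}/\pi$.
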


\begin{proof}
All constants $C$ below are independent of $m$, $n$, $\lambda$, $K_{\rm err}$, $K_1$ and $K_2$. We first prove \eqref{eq:one.step.even}.
By \eqref{eq:def.s.odd} and \eqref{eq:Rm.multiplier}, we have
$$r_{2m+1}^{(n+1)}(\lambda, p)\le \frac{1}{\lambda+(1-c_{2m+1}K_{\rm err})\theta(p)}.$$
Since $c_{2m-1}K_{\rm err}\le\frac12$, the resolvent
$R_{2m-1}$ satisfies
\eqref{eq:R.res} with $\kappa=1$ and $C_R=2$. Therefore, applying Proposition~\ref{prop:hc.form}, we get
\begin{align}
\label{eq:one.step.even.after}
\left\langle f,A_-R_{2m-1}A_+f \right\rangle &\le V_{\rm Diag}(R_{2m-1},f) + |V_{\rm Off}(R_{2m-1},f)|+Cn\langle f,(-S)f\rangle.
\end{align}
By Proposition~\ref{prop:diag.estimates} and
\eqref{eq:scale.losses}, for $K_2\ge K_2^0$ with sufficiently large $K_2^0$, we have 
\begin{align}
\label{eq:one.step.even.diag}
V_{\rm Diag}(R_{2m-1},f)
&\le
\frac{2\left(1+\frac{\varepsilon_m}{8}\right)}{\pi c_{2m-1}}
\left\langle
f,
K_1\sqrt{z_{2m}(n)}
\mathfrak u_{m-1}(\lambda,\cdot,z_{2m}(n))f
\right\rangle
+
\frac{C}{c_{2m-1}}
\langle f,(-S)f\rangle.
\end{align}
By Proposition~\ref{lem:offdiag.estimates},
\begin{align*}
|V_{\rm Off}(R_{2m-1},f)|
&\le
\frac{C n}{c_{2m-1}}
\left\langle
f,
\mathfrak u_{m-1}(\lambda,\cdot,z_{2m}(n))f
\right\rangle.
\end{align*}
By \eqref{eq:scale.losses}, $\frac{n}{\sqrt{z_{2m}(n)}}
\le C_\eps K_2^{-1/2}\varepsilon_m$. For $K_2\ge K_2^0$ with sufficiently large $K_2^0$, we thus have
\begin{align}
\label{eq:one.step.even.offdiag}
|V_{\rm Off}(R_{2m-1},f)|&\le
\frac{\varepsilon_m}{4\pi c_{2m-1}}
\left\langle
f,
K_1\sqrt{z_{2m}(n)}
\mathfrak u_{m-1}(\lambda,\cdot,z_{2m}(n))f
\right\rangle.
\end{align}
Combining \eqref{eq:one.step.even.after} with \eqref{eq:one.step.even.diag} and \eqref{eq:one.step.even.offdiag}, we obtain
\begin{align*}
\left\langle
f,
A_-R_{2m-1}A_+f
\right\rangle
&\le
\frac{2\left(1+\frac{\varepsilon_m}{4}\right)}{\pi c_{2m-1}}
\left\langle
f,
K_1\sqrt{z_{2m}(n)}
\mathfrak u_{m-1}(\lambda,\cdot,z_{2m}(n))f
\right\rangle
+
\Big(Cn+\frac{C}{c_{2m-1}}\Big) \langle f,(-S)f\rangle.
\end{align*}
Since $c_{2m-1}K_{\rm err}\le1/2$, we have $c_{2m-1}\le1/2$ and thus
$Cn+\frac{C}{c_{2m-1}}
\le
\frac{C}{c_{2m-1}}(1+n)$. By \eqref{eq:scale.losses}, for $K_2\ge K_2^0$ with sufficiently large $K_2^0$, we have
$\frac{n+1}{\sqrt{z_{2m}(n)}}
\le
C_\eps K_2^{-1/2}\varepsilon_m
\le
\frac{K_{\rm err}}{4\pi C}$.
Therefore
$$
C\Big(
n+\frac{1}{c_{2m-1}}
\Big)
\langle f,(-S)f\rangle
\le
\frac{K_{\rm err}}{4\pi c_{2m-1}}
\sqrt{z_{2m}(n)}
\langle f,(-S)f\rangle.
$$
Thus
\begin{align*}
\left\langle
f,
A_-R_{2m-1}A_+f
\right\rangle
&\le
\frac{2 }{\pi c_{2m-1}}
\left[
\left(1+\frac{\varepsilon_m}{4}\right)
\left\langle
f,
K_1\sqrt{z_{2m}(n)}
\mathfrak u_{m-1}(\lambda,\cdot,z_{2m}(n))f
\right\rangle
\right.
\nonumber\\
&\quad\left.
+
\frac18K_{\rm err}\sqrt{z_{2m}(n)}
\langle f,(-S)f\rangle
\right].
\end{align*}
Since
$T_{2m}
=
K_1\sqrt{z_{2m}(n)}
\mathfrak u_{m-1}(\lambda,\cdot,z_{2m}(n))
+
K_{\rm err}\sqrt{z_{2m}(n)}(-S)$,
and since $\frac{1}{8}\le 1+\frac{\varepsilon_m}{4}\le1+\varepsilon_m$, this implies \eqref{eq:one.step.even}.

We now prove \eqref{eq:one.step.odd}. Using \eqref{eq:def.s.even}, \eqref{eq:Rm.multiplier} and the fact that $z_{2m}(n+1)=z_{2m+1}(n)$, we have
$$
r_{2m}^{(n+1)}(\lambda,p)
\le
\frac{1}
{\lambda+
\left(
1+c_{2m}K_{\rm err}\sqrt{z_{2m+1}(n)}
\right)
\theta(p)}.
$$
Applying Proposition~\ref{prop:hc.form} with
$\kappa=
1+c_{2m}K_{\rm err}\sqrt{z_{2m+1}(n)}$, we have
\begin{align}
\label{eq:one.step.odd.after}
\left\langle
f,
A_-R_{2m}A_+f
\right\rangle
&\ge
V_{\rm Diag}(R_{2m},f)
-
|V_{\rm Off}(R_{2m},f)|
-
\frac{C n \langle f,(-S)f\rangle}
{1+c_{2m}K_{\rm err}\sqrt{z_{2m+1}(n)}}.
\end{align}
By Proposition~\ref{prop:diag.estimates} and
\eqref{eq:scale.losses}, for $K_2\ge K_2^0$ with sufficiently large $K_2^0$, we have
\begin{align}
\label{eq:one.step.odd.diag}
V_{\rm Diag}(R_{2m},f)
&\ge
\frac{2\left(1-\frac{\varepsilon_m}{8}\right)}{\pi c_{2m}} \Big\langle f, \frac{1}{K_1\sqrt{z_{2m+1}(n)}} \mathfrak l_m(\lambda,\cdot,z_{2m+1}(n))f \Big\rangle - \frac{C}{c_{2m}} \langle f,(-S)f\rangle.
\end{align}
By Proposition~\ref{lem:offdiag.estimates}, with
$z=z_{2m+1}(n)$,
\begin{align*}
|V_{\rm Off}(R_{2m},f)|
\le
\frac{Cn}{K_1c_{2m}z_{2m+1}(n)}
\left[
\left\langle
f,\mathfrak l_m(\lambda,\cdot,z_{2m+1}(n))f
\right\rangle
+
\left\langle f,(-S)f\right\rangle
\right].
\end{align*}
By \eqref{eq:scale.losses}, we have
$\frac{n}{z_{2m+1}(n)}\le \frac{n}{\sqrt {z_{2m+1}(n)}}
\le
C_\eps K_2^{-1/2}\varepsilon_m$.
For $K_2\ge K_2^0$ with sufficiently large $K_2^0$, we thus obtain
\begin{align}
\label{eq:one.step.odd.offdiag}
|V_{\rm Off}(R_{2m},f)|
&\le \frac{\varepsilon_m}{4\pi c_{2m}} \Big\langle f,\frac{1}{K_1\sqrt{z_{2m+1}(n)}} \mathfrak l_m(\lambda,\cdot,z_{2m+1}(n))f \Big\rangle + \frac{C\varepsilon_m}{c_{2m}} \langle f,(-S)f\rangle.
\end{align}
The last term is bounded by
$C c_{2m}^{-1}\langle f,(-S)f\rangle$. Combining \eqref{eq:one.step.odd.after},
\eqref{eq:one.step.odd.diag}, and
\eqref{eq:one.step.odd.offdiag}, we obtain
\begin{align*}
\left\langle
f,
A_-R_{2m}A_+f
\right\rangle
&\ge
\frac{2 \left(1-\frac{\varepsilon_m}{4}\right)}{\pi c_{2m}}
\Big\langle f,\frac{1}{K_1\sqrt{z_{2m+1}(n)}}
\mathfrak l_m(\lambda,\cdot,z_{2m+1}(n))f \Big\rangle \\
&\quad
-
\left[
\frac{C}{c_{2m}}
+
\frac{C n}
{1+c_{2m}K_{\rm err}\sqrt{z_{2m+1}(n)}}
\right]
\langle f,(-S)f\rangle.
\end{align*}
By \eqref{eq:scale.losses},
$\frac{n}{\sqrt{z_{2m+1}(n)}}
\le
C_\eps K_2^{-1/2}\varepsilon_m$. For $K_2\ge K_2^0$ with sufficiently large $K_2^0$, we thus have
$$
\frac{n}
{1+c_{2m}K_{\rm err}\sqrt{z_{2m+1}(n)}}
\le
\frac{1}{c_{2m}K_{\rm err}}
\frac{n}{\sqrt{z_{2m+1}(n)}}\le \frac{C_\eps K_2^{-1/2}\varepsilon_m}{c_{2m}K_{\rm err}}
\le
\frac{C}{c_{2m}}.
$$
Therefore, for $K_{\rm err}\ge K_{\rm err}^0$ with sufficiently large $K_{\rm err}^0$, we have
$$
\left[
\frac{C}{c_{2m}}
+
\frac{C n}
{1+c_{2m}K_{\rm err}\sqrt{z_{2m+1}(n)}}
\right]
\le
\frac{K_{\rm err}}{2\pi c_{2m}}.
$$
Hence
\begin{align*}
\left\langle
f,
A_-R_{2m}A_+f
\right\rangle
&\ge
\frac{2\left(1-\frac{\varepsilon_m}{4}\right)}{\pi c_{2m}}
\Big\langle
f,
\frac{1}{K_1\sqrt{z_{2m+1}(n)}}
\mathfrak l_m(\lambda,\cdot,z_{2m+1}(n))f
\Big\rangle -
\frac{K_{\rm err}}{2\pi c_{2m}}
\langle f,(-S)f\rangle.
\end{align*}
Since $1-\frac{\varepsilon_m}{4}\ge1-\varepsilon_m$ and
$$
T_{2m+1}
=
\frac{1}{K_1\sqrt{z_{2m+1}(n)}}
\mathfrak l_m(\lambda,\cdot,z_{2m+1}(n))
-
K_{\rm err}(-S),
$$
this implies
\eqref{eq:one.step.odd}. The proof is complete.
\end{proof}

\subsection{Recursive operator bounds}
\label{subsec:recur.operator.bounds}
Throughout this subsection we assume $d=3$ and fix $\lambda\in (0,1)$. We prove the recursive comparison between the operator $\Lambda_m$ defined in \eqref{eq:def.Lambda} and the diagonal operators $T_m$ defined by the Fourier multipliers
\eqref{eq:T.def}.

\begin{proposition}
\label{prop:recur.bounds}
We have $\Lambda_1=c_1T_1=0$. For every $m\ge1$,
\begin{align}
\label{eq:recur.even.bound}
\Lambda_{2m} &\le c_{2m}T_{2m} \quad\text{and}\quad \Lambda_{2m+1}\ge c_{2m+1}T_{2m+1}.
\end{align}
\end{proposition}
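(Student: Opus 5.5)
The proof goes by induction on $m$, alternating between even and odd steps. Two facts drive it: operator monotonicity of inversion, and the one-step estimates of Proposition~\ref{prop:res.quad.form}. The base case $\Lambda_1=0=c_1T_1$ holds by \eqref{eq:def.Lambda} and \eqref{eq:def.s.one}.

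\emph{Even step.} Assume $\Lambda_{2m-1}\ge c_{2m-1}T_{2m-1}$ as operators on each $\ell^2_0(U,3,n)$. For $m=1$ this is equality. Then $\lambda-S+\Lambda_{2m-1}\ge\lambda-S+c_{2m-1}T_{2m-1}$. The right-hand side is strictly positive by \eqref{eq:odd.denominator.lower}, and it is bounded below by $\lambda+\frac12\theta$, so it is invertible with bounded inverse. Operator monotonicity of $X\mapsto X^{-1}$ on positive invertible operators gives
\[
(\lambda-S+\Lambda_{2m-1})^{-1}\le R_{2m-1}.
\]
We conjugate this by $A_+$, using $A_-=(A_+)^*$; this preserves the order, since $\langle f,A_-XA_+f\rangle=(n+1)\langle A_+f,XA_+f\rangle$. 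Hence
\[
\langle f,\Lambda_{2m}f\rangle\le\langle f,A_-R_{2m-1}A_+f\rangle\le\frac{2}{\pi c_{2m-1}}(1+\varepsilon_m)\langle f,T_{2m}f\rangle=c_{2m}\langle f,T_{2m}f\rangle.
\]
The middle inequality is \eqref{eq:one.step.even}, and the final equality is the definition \eqref{eq:def.cm}.

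\emph{Odd step.} Assume $\Lambda_{2m}\le c_{2m}T_{2m}$. Then $\lambda-S+\Lambda_{2m}\le\lambda-S+c_{2m}T_{2m}$, and the smaller operator is at least $\lambda>0$ because $\Lambda_{2m}\ge0$. Inverting reverses the order, so $(\lambda-S+\Lambda_{2m})^{-1}\ge R_{2m}$. Conjugating by $A_+$ and applying \eqref{eq:one.step.odd} gives
\[
\Lambda_{2m+1}\ge A_-R_{2m}A_+\ge\frac{2}{\pi c_{2m}}(1-\varepsilon_m)T_{2m+1}=c_{2m+1}T_{2m+1}.
\]

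\emph{Technical points.} The only delicate issue is domains. The operators $T_m$, $\Lambda_m$ and $R_m$ act on every chaos level $n$, but with $n$-dependent multipliers through $z_m(n)$. We therefore carry out each comparison levelwise: $\Lambda_m$, $S$ and $T_m$ all preserve $\ell^2_0(U,3,n)$, and $A_\pm$ shift the degree by one. The multipliers are matched across levels by $z_{2m-1}(n+1)=z_{2m}(n)$, which is already built into Proposition~\ref{prop:res.quad.form}. The operator $T_{2m}$ may be unbounded, since $\mathfrak u$ grows like $L$. For this reason we phrase the monotonicity of inversion via quadratic forms on a form core, for example finitely supported $f$, or note that the multipliers are in fact bounded for fixed $\lambda>0$ because $L(x,z)\le z+\log(1+\lambda^{-1})$. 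With that bound, all operators are bounded and the standard fact $0<X\le Y\Rightarrow Y^{-1}\le X^{-1}$ applies directly. I expect this boundedness remark to be the main point needing care; the rest is bookkeeping.
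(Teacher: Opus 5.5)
Your proposal is correct and follows the paper's proof essentially step for step: an alternating induction starting from $\Lambda_1=c_1T_1=0$, reversal of order under inversion of the strictly positive operators $\lambda-S+\Lambda_m$ and $\lambda-S+c_mT_m$, conjugation by $A_+$, then Proposition~\ref{prop:res.quad.form} combined with the recursion \eqref{eq:def.cm}. Your levelwise boundedness remark is a valid justification of the inversion step, which the paper leaves implicit: on each $\ell^2_0(U,3,n)$ the multipliers are bounded because $L(\lambda+\theta_k,z)\le z+\log(1+\lambda^{-1})$ with $z=z_m(n)$ fixed.
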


\begin{proof}
By \eqref{eq:def.Lambda}, we have
$\Lambda_1=0$. Since $\mathfrak s_n^{(1)}(\lambda,p)=0$, we also
have $T_1=0$. Therefore
$\Lambda_1=c_1T_1=0$ on $\ell_0^2(U,3,n)$ for all $n\ge1$.

We prove \eqref{eq:recur.even.bound} by induction. First, we prove
that, for each $m\ge1$,
\begin{equation}
\label{eq:recur.odd.to.even}
\text{if}\quad
\Lambda_{2m-1}
\ge
c_{2m-1}T_{2m-1}
\quad\text{then} \quad
\Lambda_{2m}
\le
c_{2m}T_{2m}.
\end{equation}
Assume that, for some fixed $m\ge1$ and every $n\ge1$,
\begin{equation}
\label{eq:recur.induction.odd.ass}
\Lambda_{2m-1}
\ge
c_{2m-1}T_{2m-1}\quad\text{on $\ell_0^2(U,3,n)$}.
\end{equation}
Let $n\ge1$ and let $f\in\ell^2_0(U,3,n)$. By the definition \eqref{eq:def.Lambda},
\begin{align}
\label{eq:Lambda.even.start}
\langle f,\Lambda_{2m}f\rangle
&=
\left\langle
f,A_-
\left(
\lambda-S+\Lambda_{2m-1}
\right)^{-1}
A_+f
\right\rangle.
\end{align}
Applying \eqref{eq:recur.induction.odd.ass}, we have
\begin{equation*}
\lambda-S+\Lambda_{2m-1}
\ge
\lambda-S+c_{2m-1}T_{2m-1}.
\end{equation*}
Both sides of
the above inequality are strictly positive. Hence 
\begin{align*}
\left(
\lambda-S+\Lambda_{2m-1}
\right)^{-1}
&\le
\left(
\lambda-S
+
c_{2m-1}T_{2m-1}
\right)^{-1}
=
R_{2m-1}.
\end{align*}
Combining this with \eqref{eq:Lambda.even.start}  and using \eqref{eq:one.step.even}, we obtain
\begin{align*}
\langle f,\Lambda_{2m}f\rangle
&\le
\left\langle
f,A_-
R_{2m-1}
A_+f
\right\rangle\le
\frac{2 }{\pi c_{2m-1}}
(1+\varepsilon_m)
\langle f,T_{2m}f\rangle.
\end{align*}
By the definition \eqref{eq:def.cm},
$c_{2m}
=
\frac{2 }{\pi c_{2m-1}}(1+\varepsilon_m)$.
Hence, we obtain
\begin{equation*}
\langle f,\Lambda_{2m}f\rangle
\le
c_{2m}\langle f,T_{2m}f\rangle.
\end{equation*}
This proves \eqref{eq:recur.odd.to.even}.

Second, we prove that, for each $m\ge1$,
\begin{equation}
\label{eq:recur.even.to.odd}
\text{if}\quad \Lambda_{2m}
\le
c_{2m}T_{2m}\quad\text{then}\quad
\Lambda_{2m+1}
\ge
c_{2m+1}T_{2m+1}.
\end{equation}
Assume that, for some fixed $m\ge1$ and every $n\ge1$,
\begin{equation}
\label{eq:recur.induction.even.ass}
\Lambda_{2m}
\le
c_{2m}T_{2m} \quad\text{on $\ell_0^2(U,3,n)$}.
\end{equation}
Let $n\ge1$ and let $f\in\ell^2_0(U,3,n)$. By
\eqref{eq:def.Lambda},
\begin{align}
\label{eq:Lambda.odd.start}
\langle f,\Lambda_{2m+1}f\rangle
&=
\left\langle
f,A_-
\left(
\lambda-S+\Lambda_{2m}
\right)^{-1}
A_+f
\right\rangle.
\end{align}
Applying \eqref{eq:recur.induction.even.ass}, we have
\begin{equation*}
\lambda-S+\Lambda_{2m}
\le
\lambda-S+c_{2m}T_{2m}.
\end{equation*}
Both sides of the above inequality are strictly positive. Hence
\begin{align*}
\left(
\lambda-S+\Lambda_{2m}
\right)^{-1}
&\ge
\left(
\lambda-S
+
c_{2m}T_{2m}
\right)^{-1}
=
R_{2m}.
\end{align*}
Combining this with
\eqref{eq:Lambda.odd.start} and using \eqref{eq:one.step.odd}, we obtain
\begin{align*}
\langle f,\Lambda_{2m+1}f\rangle &\ge \left\langle f, A_- R_{2m} A_+f \right\rangle \ge \frac{2 }{\pi c_{2m}} (1-\varepsilon_m) \langle f,T_{2m+1}f\rangle.
\end{align*}
Using the fact that $c_{2m+1} = \frac{2}{\pi c_{2m}}(1-\varepsilon_m)$, we thus get
\begin{equation*}
\langle f,\Lambda_{2m+1}f\rangle \ge c_{2m+1}\langle f,T_{2m+1}f\rangle.
\end{equation*}
This proves \eqref{eq:recur.even.to.odd}.

It remains to prove that the two implications imply the proposition for
all $m\ge1$. Since $\Lambda_1=c_1T_1$, we
have $\Lambda_1\ge c_1T_1$. Applying
\eqref{eq:recur.odd.to.even} with $m=1$, we have
$\Lambda_2\le c_2T_2$. Applying
\eqref{eq:recur.even.to.odd} with $m=1$, we have
$\Lambda_3\ge c_3T_3$. Iterating the two
implications, we obtain \eqref{eq:recur.even.bound} for every $m\ge1$. The proof is complete.
\end{proof}

\subsection{Proof of the main theorem}
\label{subsec:proof.main.theorem}

We now prove Theorem \ref{thm.main}.

\begin{proof}[Proof of Theorem \ref{thm.main}]
Fix $\eps>0$. Recall from Section 2 that
\begin{equation*}
D_G(\lambda)=2\lambda^{-2}\langle \phi,(\lambda-G)^{-1}\phi\rangle_{L^2(\mathcal{E},\pi)}= 2\lambda^{-2} \llangle u,(\lambda-G)^{-1}u\rrangle.
\end{equation*}
Moreover, $\E[|X_t|^2]=d\left(t+E_G(t)\right)$. Therefore $D(\lambda)=d\lambda^{-2}+dD_G(\lambda)$, and hence
\begin{equation}
\label{eq:lambda2D.res}
\lambda^2D(\lambda) = d+ 2d\llangle u,(\lambda-G)^{-1}u\rrangle.
\end{equation}
Thus it is enough to estimate $\llangle u,(\lambda-G)^{-1}u\rrangle$.

Let $\lambda_0\in(0,1)$ be sufficiently small such that $\left\lfloor\frac12\log\log(1+\lambda^{-1})\right\rfloor\ge1$ for every $0<\lambda<\lambda_0$. For such $\lambda$, set
\begin{equation}
\label{eq:def.Mlambda}
M_\lambda := \left\lfloor \frac12\log \log(1+\lambda^{-1}) \right\rfloor.
\end{equation}
In the rest of the proof we write $M$ for $M_\lambda$. By Lemma
\ref{lemma.monotonicity}, we have
\begin{equation*}
\llangle u,(\lambda-G_{2M})^{-1}u\rrangle \le \llangle u,(\lambda-G)^{-1}u\rrangle \le \llangle u,(\lambda-G_{2M+1})^{-1}u\rrangle.
\end{equation*}
Using \eqref{eq:schur.rep}, we obtain
\begin{align}
\label{eq:schur.trunc}
\left\langle u^{(1)}, (\lambda-S+\Lambda_{2M})^{-1} u^{(1)} \right\rangle &\le \llangle u,(\lambda-G)^{-1}u\rrangle \le \left\langle u^{(1)}, (\lambda-S+\Lambda_{2M+1})^{-1} u^{(1)} \right\rangle.
\end{align}

We first prove the upper bound. By Proposition \ref{prop:recur.bounds}, we have $\Lambda_{2M+1}\ge c_{2M+1}T_{2M+1}$. Since both $\lambda-S+\Lambda_{2M+1}$ and $\lambda-S+c_{2M+1}T_{2M+1}$ are strictly positive, we get
\begin{align}
\label{eq:upper.inv}
&\left\langle u^{(1)}, (\lambda-S+\Lambda_{2M+1})^{-1} u^{(1)} \right\rangle \le \left\langle u^{(1)}, (\lambda-S+c_{2M+1}T_{2M+1})^{-1} u^{(1)}
\right\rangle.
\end{align}
By \eqref{eq:def.s.odd},
\begin{align*}
&\lambda+\theta(p)+c_{2M+1}\mathfrak s_1^{(2M+1)}(\lambda,p) = \lambda+(1-c_{2M+1}K_{\rm err})\theta(p)+ \frac{c_{2M+1}}{K_1\sqrt{z_{2M+1}(1)}} \mathfrak l_M(\lambda,p,z_{2M+1}(1)).
\end{align*}
By Lemma \ref{lem:scale.identities.losses}, we have $c_{2M+1}\ge c_{-}$ and $c_{2M+1}K_{\rm err}\le \frac{1}{2}$. Therefore, the last denominator is bounded
from below by a positive constant multiple of
$$
K_1^{-1}z_{2M+1}(1)^{-1/2} \left[ \lambda+\theta(p)+ \mathfrak l_M(\lambda,p,z_{2M+1}(1))\right].
$$
Since $\widehat u^{(1)}_j(p)=\mathbf 1_{\{j=1\}}$, by the Parseval–Plancherel formula \eqref{eq:parseval.parameterized} and the upper bound in Lemma~\ref{lem:diagonal.integrals}, applied with the degree-zero convention, with $n=0$, $k=1$, $p=\emptyset$, $z=z_{2M+1}(1)$ and $m=M$, we get
\begin{align}
\label{eq:degree.one.upper.raw}
&\left\langle u^{(1)}, (\lambda-S+c_{2M+1}T_{2M+1})^{-1} u^{(1)} \right\rangle \le C\sqrt{z_{2M+1}(1)}\,{\rm UB}_{M}(\lambda,z_{2M+1}(1)).
\end{align}
Combining \eqref{eq:schur.trunc},
\eqref{eq:upper.inv}, and \eqref{eq:degree.one.upper.raw}, we obtain
\begin{equation}
\label{eq:res.upper2}
\llangle u,(\lambda-G)^{-1}u\rrangle \le C\sqrt{z_{2M+1}(1)} \,{\rm UB}_{M}(\lambda,z_{2M+1}(1)).
\end{equation}

We now prove the lower bound. By Proposition
\ref{prop:recur.bounds}, we have
$\Lambda_{2M}\le c_{2M}T_{2M}$. Thus
\begin{align*}
&\left\langle u^{(1)}, (\lambda-S+\Lambda_{2M})^{-1} u^{(1)} \right\rangle \ge \left\langle u^{(1)}, (\lambda-S+c_{2M}T_{2M})^{-1} u^{(1)} \right\rangle.
\end{align*}
Using \eqref{eq:def.s.even} and the fact $c_{2M}\le c_+$, we notice that there exists $C<\infty$ such that
\begin{align*}
&\lambda+\theta(p)+c_{2M}\mathfrak s_1^{(2M)}(\lambda,p) \le C\sqrt{z_{2M}(1)} \left[ \lambda+\theta(p) + \mathfrak u_{M-1}(\lambda,p,z_{2M}(1)) \right].
\end{align*}
Applying the Parseval–Plancherel formula \eqref{eq:parseval.parameterized} and using the lower bound in Lemma~\ref{lem:diagonal.integrals}, applied with the degree-zero convention, with $n=0$, $k=1$, $p=\emptyset$, $z=z_{2M}(1)$ and $m=M-1$, we obtain, for all sufficiently small $\lambda$,
\begin{align*}
&\left\langle
u^{(1)}, (\lambda-S+c_{2M}T_{2M})^{-1} u^{(1)} \right\rangle \ge \frac{C^{-1}}{\sqrt{z_{2M}(1)}} \left[ {\rm LB}_{M}(\lambda,z_{2M}(1)) - C{\rm LB}_{M}(1,z_{2M}(1)) - C
\right].
\end{align*}
Since ${\rm LB}_{M}(1,z_{2M}(1))\le\sqrt{L(1,z_{2M}(1))} \le C\sqrt{z_{2M}(1)}$, the second and third terms on the right-hand side of the above inequality are bounded below by a negative constant. Consequently,
\begin{align}
\label{eq:res.lower}
\llangle u,(\lambda-G)^{-1}u\rrangle &\ge \frac{C^{-1}}{\sqrt{z_{2M}(1)}} {\rm LB}_{M}(\lambda,z_{2M}(1)) -C.
\end{align}

Set $z_\lambda^+:=z_{2M+1}(1)$ and $z_\lambda^-:=z_{2M}(1)$.
By \eqref{eq:def.zmn} and \eqref{eq:def.Mlambda}, there exists $C<\infty$ such that
\begin{equation}
\label{eq:zlambda.sqrt.bound}
\sqrt{z_\lambda^\pm} \le C(\log \log(1+\lambda^{-1}))^{2+\eps}.
\end{equation}
Hence there exists sufficiently small $\lambda_0$ such that
$z_\lambda^\pm\le (\log(1+\lambda^{-1}))^{1/4}$ for all $0<\lambda<\lambda_0$.
Therefore
\begin{equation}
\label{eq:Llambda.z}
\log(1+\lambda^{-1}) \le L(\lambda,z_\lambda^\pm)\le 2\log(1+\lambda^{-1}) \quad\text{for all $0<\lambda<\lambda_0$.}
\end{equation}
Using \eqref{eq:Llambda.z} and the definition of $M$, we have
\begin{equation}
\label{eq:a.lambda}
\Big|M-\frac{1}{2}\log L(\lambda,z_\lambda^\pm)\Big|\le C.
\end{equation}
If $a=\frac12\log L(\lambda,z)$, then ${\rm LB}_M(\lambda,z)={\rm e}^a\,\mathbb P(N_a\le M)$, where $N_a$ is a Poisson random variable with parameter $a$. Since $|M-a|\le C$ by \eqref{eq:a.lambda}, the central limit theorem for Poisson random variables implies that there exists $C_0\in(1,\infty)$ such that, for all sufficiently small $\lambda$,
$$
C_0^{-1}\le\mathbb P(N_a\le M)\le C_0.
$$
Hence, for all sufficiently small $\lambda$,
\begin{equation}
\label{eq:LB.midpoint}
C^{-1}\sqrt{\log(1+\lambda^{-1})}\le {\rm LB}_{M}(\lambda,z_\lambda^\pm)\le C\sqrt{\log(1+\lambda^{-1})}.
\end{equation}
Using ${\rm UB}_{M}(\lambda,z)=L(\lambda,z)/{\rm LB}_{M}(\lambda,z)$ and
\eqref{eq:Llambda.z}, we also get
\begin{equation}
\label{eq:UB.midpoint}
{\rm UB}_{M}(\lambda,z_\lambda^\pm)\le C\sqrt{\log(1+\lambda^{-1})}.
\end{equation}

Combining \eqref{eq:res.upper2},
\eqref{eq:zlambda.sqrt.bound}, and \eqref{eq:UB.midpoint}, we obtain
\begin{equation}
\label{res.upper}
\llangle u,(\lambda-G)^{-1}u\rrangle \le C(\log \log(1+\lambda^{-1}))^{2+\eps}\sqrt{\log(1+\lambda^{-1})}.
\end{equation}
Combining \eqref{eq:res.lower},
\eqref{eq:zlambda.sqrt.bound}, and \eqref{eq:LB.midpoint}, we obtain
\begin{equation}
\label{eq:res.lower2}
\llangle u,(\lambda-G)^{-1}u\rrangle\ge C^{-1}(\log \log(1+\lambda^{-1}))^{-2-\eps} \sqrt{\log(1+\lambda^{-1})}.
\end{equation}

By \eqref{res.upper} and
\eqref{eq:res.lower2}, there exists a constant $C_{\eps}\in (1,\infty)$ such that for every $\lambda\in (0,e^{-2})$, 
\begin{align*}
C_\eps^{-1}(\log|\log\lambda|)^{-2-\eps}\sqrt{|\log\lambda|}&\le\llangle u,(\lambda-G)^{-1}u\rrangle\le C_\eps(\log|\log\lambda|)^{2+\eps}\sqrt{|\log\lambda|}.
\end{align*}
Recall from \eqref{eq:lambda2D.res} that $\lambda^2D(\lambda)=d+2d\llangle u,(\lambda-G)^{-1}u\rrangle$. Hence, for sufficiently large $C_{\eps}$,
$$
C_\eps^{-1}(\log|\log\lambda|)^{-2-\eps} \le \frac{\lambda^2D(\lambda)}{\sqrt{|\log\lambda|}}\le C_\eps(\log|\log\lambda|)^{2+\eps}.
$$
This completes the proof of Theorem \ref{thm.main}.
\end{proof}

\section*{Acknowledgment} This work was partially supported by Australian Research Council grant ARC DP230102209. Part of the work was conducted during the program “Stochastic Systems for Anomalous Diffusion” (July–December 2024), hosted by the Isaac Newton Institute for Mathematical Sciences (Cambridge, UK) funded by EPSRC grant EP/Z000580/1.
The author would like to thank Andrea Collevecchio and B\'alint T\'oth for their valuable feedback and literature suggestions, which helped improve the manuscript.
\bibliographystyle{amsplain}
\bibliography{refs}

\end{document}